\patchcmd{\ttlh@hang}{\parindent\z@}{\parindent\z@\leavevmode}{}{}
\patchcmd{\ttlh@hang}{\noindent}{}{}{}
\tikzset{
	0c/.style={circle, draw, fill, inner sep=.7pt},
	1c/.style={->, shorten <=2pt, shorten >=2pt},
	1clong/.style={->},
	edge/.style={shorten <=2pt, shorten >=2pt},
	equal/.style={shorten <=2pt, shorten >=2pt, double},
	1cinc/.style={right hook->, shorten <=2pt, shorten >=2pt},
	1csurj/.style={->>, shorten <=2pt, shorten >=2pt},
	1cincl/.style={left hook->, shorten <=2pt, shorten >=2pt},
	arloop/.tip={Glyph[glyph math command=looparrowleft, swap]},
	1cloop/.style={arloop->, shorten <=2pt, shorten >=2pt},
	2c/.style={double distance=1.5pt, shorten <=6pt, shorten >=8pt, decoration={markings,mark=at position -6pt with {\arrow[scale=1.5]{>}}}, preaction={decorate}},
	follow/.style={->, >=stealth, ultra thick, shorten <=3pt, shorten >=3pt, color=gray!70},
	bar/.style={ultra thick, shorten <=3pt, shorten >=3pt, color=magenta},
	arlabel/.style={scale=.8},
	wire/.style={line width=.5pt, color=black},
	wirecov/.style={line width=5pt, color=gray!10},
	cover/.style={circle, draw=gray!10, fill=gray!10, inner sep=2.5pt},
	aura2/.style={rounded corners, line width=.5pt, fill, color=gray, fill opacity=.1, draw opacity=.3},
	aura/.style={rounded corners, line width=.5pt, fill, color=magenta, fill opacity=.1, draw opacity=.3},
	wiredot/.style={densely dotted, line width=.5pt, color=gray},
	wirethin/.style={line width=.5pt, draw opacity=.2},
	outline/.style={draw=gray, line width=.5pt, draw opacity=.3},
	dotthin/.style={circle, draw=gray!60, fill=gray!60, inner sep=1pt},
	dot/.style={circle, draw, fill=black, arlabel, inner sep=1pt},
	dotwhite/.style={circle, draw=black, line width=.5pt, fill=white, inner sep=1.1pt},
}
\newcommand\eqdef{\coloneqq}
\newcommand\nbd{\nobreakdash-\hspace{0pt}}
\newcommand\idd[1]{\mathrm{id}_{#1}}
\newcommand\invrs[1]{#1^{-1}}
\newcommand\incl{\hookrightarrow}
\newcommand\incliso{\stackrel{\sim}{\hookrightarrow}}
\newcommand\surj{\twoheadrightarrow}
\newcommand\pfun{\rightharpoonup}
\newcommand\undl[1]{\underline{#1}}
\newcommand\slice[2]{{#1}/{\raisebox{-2pt}{$#2$}}}
\newcommand\gray{\,{\otimes}\,}
\newcommand\gsmash{\,{\owedge}\,}
\newcommand\tensorp{\,{\otimes}_\mathbb{S}\,}
\newcommand\coo[1]{{#1}^\mathrm{co}}
\newcommand\oppall[1]{{#1}^\circ}
\newcommand\hasse[1]{\mathscr{H}#1}
\newcommand\hasseo[1]{\mathscr{H}^o#1}
\newcommand\dmn[1]{\mathrm{dim}(#1)}
\newcommand\rank[1]{\mathrm{rk}(#1)}
\newcommand\frdmn[1]{\mathrm{frdim}(#1)}
\newcommand\clos[1]{\mathrm{cl}#1}
\NewDocumentCommand \bord{g g} {\IfNoValueTF{#2}{%
	\IfNoValueTF{#1}{\partial}{\partial_{#1}}}{\partial_{#1}^{#2}}}
\NewDocumentCommand \sbord{g g} {\IfNoValueTF{#2}{%
	\IfNoValueTF{#1}{\Delta}{\Delta_{#1}}}{\Delta_{#1}^{#2}}}
\newcommand\cp[1]{\,{\scriptstyle\#}_{#1}\,}
\newcommand\mol[2]{\mathcal{M}o\ell{#2}_{#1}}
\newcommand\submol{\sqsubseteq}
\newcommand\maxd[2]{\mathscr{M}_{#1}#2}
\newcommand\infl[1]{O{(#1)}}
\newcommand\eps[1]{\varepsilon_{#1}}
\newcommand\cls[1]{\mathcal{#1}}
\newcommand\celto{\Rightarrow}
\newcommand\compos[1]{\langle#1\rangle}
\newcommand\atom{{\raisebox{-.02em}{%
\begin{tikzpicture}[baseline={(current bounding box.south)}]%
	\node[circle, draw, line width=.05em, inner sep=.25em] at (0,0) {};%
	\node[circle, fill, inner sep=.06em] at (0,0) {};%
	\node[circle, fill, inner sep=.06em] at (0,.35em) {};%
\end{tikzpicture}}}}
\newcommand\rimp[2]{#1\!\multimap\!#2}
\newcommand\limp[2]{#2\!\multimapinv\!#1}
\newcommand\skel[2]{\sigma_{\leq #1}#2}
\newcommand\coskel[2]{\tau_{\leq #1}#2}
\newcommand\gen[1]{\mathscr{#1}}
\newcommand\intp[1]{\llbracket{#1}\rrbracket}
\newcommand\realis[1]{|#1|}
\newcommand\cat[1]{\mathbf{#1}}
\newcommand\fun[1]{\mathsf{#1}}
\newcommand\theory[1]{\textit{#1}}
\newcommand\psh[2]{\mathrm{PSh}_{#1}(#2)}
\newcommand\omegagphref{\omega\cat{Gph}_\textit{ref}}
\newcommand\omegacat{\omega\cat{Cat}}
\newcommand\ncat[1]{#1\cat{Cat}}
\newcommand\nprecat[1]{#1\cat{PreCat}}
\newcommand\omegaprecat{\omega\cat{PreCat}}
\newcommand\pomegacat{\cat{p}\omega\cat{Cat}}
\newcommand\graycat{\cat{GrayCat}}
\newcommand\brmoncat{\cat{BrMonCat}_\textit{str}}
\newcommand\bipro{\cat{Pro}_\textit{bi}}
\newcommand\pro{\cat{Pro}}
\newcommand\prob{\cat{Prob}}
\newcommand\propp{\cat{Prop}}
\newcommand\rdcpx{\cat{DCpx}^\cls{R}}
\newcommand\dgmset{\atom\cat{Set}}
\newcommand\dgmpoint{\dgmset_\bullet}
\newcommand\cghaus{\cat{cgHaus}}
\newcommand\pointed{\cat{cgHaus}_\bullet}
\newtheoremstyle{ittheorem}
  {\topsep}   
  {\topsep}   
  {\itshape}  
  {0pt}       
  {\itshape \bfseries} 
  { ---}         
  {5pt plus 1pt minus 1pt} 
  {}          
\newtheoremstyle{itdfn}
  {\topsep}   
  {\topsep}   
  {}  
  {0pt}       
  {\bfseries} 
  {.}         
  {5pt plus 1pt minus 1pt} 
  {}          
  \renewcommand\@upn{\textit}
\theoremstyle{ittheorem}
\newtheorem{thm}{Theorem}[section]
\newtheorem{prop}[thm]{Proposition}
\newtheorem{cor}[thm]{Corollary}
\newtheorem{lem}[thm]{Lemma}
\theoremstyle{itdfn}
\newtheorem{dfn}[thm]{}
\theoremstyle{remark}
\newtheorem{rmk}[thm]{Remark}
\newtheorem{comm}[thm]{Comment}
\newtheorem{exm}[thm]{Example}
\setlist{leftmargin=20pt,itemsep=0pt,topsep=1ex}
\renewcommand{\cftsecpagefont}{\mdseries}
\makeatletter \renewcommand{\cftsecfillnum}[1]{%
  {\cftsecleader}\nobreak
  \makebox[\@pnumwidth][\cftpnumalign]{\cftsecpagefont \oldstylenums{#1}}\cftsecafterpnum\par
} \makeatother
\newcommand\runtitle{the smash product of monoidal theories}
\newcommand\runauthor{amar hadzihasanovic}
\title{The smash product of monoidal theories}
\author{Amar Hadzihasanovic}
\institution{Tallinn University of Technology}
\begin{document}


\maketitle 

\noindent\makebox[\textwidth][r]{%
\begin{minipage}[t]{.7\textwidth}
\small \textit{Abstract.}
The tensor product of props was defined by Hackney and Robertson as an extension of the Boardman--Vogt product of operads to more general monoidal theories. Theories that factor as tensor products include the theory of commutative monoids and the theory of bialgebras. We give a topological interpretation (and vast generalisation) of this construction as a low-dimensional projection of a ``smash product of pointed directed spaces''. Here directed spaces are embodied by combinatorial structures called diagrammatic sets, while Gray products replace cartesian products. The correspondence is mediated by a web of adjunctions relating diagrammatic sets, pros, probs, props, and Gray\nbd categories. The smash product applies to presentations of higher-dimensional theories and systematically produces higher-dimensional coherence cells.
\end{minipage}}

\vspace{20pt}

\makeaftertitle

\normalsize


\noindent\makebox[\textwidth][c]{%
\begin{minipage}[t]{.5\textwidth}
\setcounter{tocdepth}{1}
\tableofcontents
\end{minipage}}

\section*{Introduction}

In a categorical tradition of universal algebra dating back to F.\ William Lawvere's thesis \cite{lawvere1963functorial}, algebraic theories are embodied by cartesian monoidal categories whose objects are freely generated from a set of sorts. Models of the theory, embodied by strong monoidal functors, may live in an arbitrary cartesian monoidal category: we specialise to the category of sets and functions to recover the classical notion of model.

Following this fundamental shift in perspective, and considering that cartesianness of a monoidal structure can be defined equationally \cite{fox1976coalgebras}, it is a relatively small step to consider more general \emph{monoidal theories} whose models live in arbitrary monoidal categories. 

Monoidal theories are embodied by structures called \emph{pros} \cite{maclane1965categorical}.\footnote{In some sources, the term \emph{pro} or \emph{PRO} is reserved for a one-sorted theory, and the multi-sorted variant is called a \emph{coloured} pro.} Intermediate between monoidal theories and algebraic theories, there are \emph{braided} and \emph{symmetric} monoidal theories, embodied respectively by \emph{probs} and \emph{props}. The familiar term\nbd algebraic calculus is inadequate for these generalised theories, and is commonly replaced by a calculus of string diagrams \cite{selinger2010survey}. 

It is common for a mathematical object to have both a structure of $T$\nbd model and of $S$\nbd model for some theories $T, S$, satisfying some compatibility condition. For example, a bimodule is both a left and a right module, in such a way that the left and right actions commute. A natural question is: can we systematically \emph{compose} theories, so that a model of the composite of $T$ and $S$ is an object with compatible $T$ and $S$\nbd model structures?

In a line of work that has been attracting attention in theoretical computer science \cite{bonchi2014interacting}, composition of monoidal theories is mediated by distributive laws which specify a factorisation system between operations of $T$ and $S$, as described by Steve Lack \cite{lack2004composing}. 

A less flexible, yet more uniform composition is the \emph{tensor product of props}, which applies to all props and does not require additional data. The tensor product was defined and studied by Philip Hackney and Marcy Robertson \cite{hackney2015category},\footnote{Although its possibility was noticed earlier by John C.\ Baez \cite{baez2006universal}. Baez's lectures are also a nice survey of the relations between pros, probs, props, and algebraic theories.} who also proved that it extends, in a precise sense, the product of symmetric operads introduced by J.\ M.\ Boardman and R.\ M.\ Vogt \cite{boardman2006homotopy}.

Some intuition about the tensor product may be gained as follows. If $\cat{M}$ is a symmetric monoidal category, the category of $T$\nbd models in $\cat{M}$ inherits a symmetric monoidal structure: to compose two models, we ``run their operations in parallel'', using the symmetric structure of $\cat{M}$ to rearrange inputs and outputs as needed. The data of a model in $\cat{M}$ of the tensor product $T \tensorp S$ is equivalent to the data of an $S$\nbd model in the category of $T$\nbd models in $\cat{M}$.\footnote{Or, symmetrically, a $T$\nbd model in the category of $S$\nbd models in $\cat{M}$.} 

As remarked in \cite[\S 4.2]{lack2004composing}, there is something mysterious about the r\^ole of symmetric braidings in the composition of monoidal theories. From a certain perspective, a symmetric braiding is just another operation in a pro, yet it plays an inescapable structural r\^ole in the tensor product. 

Consider the theories of \emph{monoids} and \emph{comonoids}. These are \emph{planar} monoidal theories, naturally embodied by a pro: in the corresponding prop, symmetric braidings are added freely, so models in the sense of props are equivalent to models in the sense of pros. Nevertheless, their tensor product --- the theory of \emph{bialgebras} --- features the non-planar equation
\begin{equation} \label{eq:non_planar}
	\input{img/non_planar}
\end{equation}
where the symmetric braiding in the left-hand side cannot clearly be attributed to either factor.

In particular, the tensor product of props does not restrict to a monoidal structure on pros. At most, as shown in Section \ref{sec:tensor_pros}, we can define an ``external'' tensor product which takes two pros and returns a prob, from which we can then universally reconstruct the tensor product of props.

A few years ago, we noticed that equation (\ref{eq:non_planar}) admits the following topological interpretation.\footnote{A similar observation was made, around the same time, by J.\ Scott Carter \cite{carter2018graphs}.} Take the string diagrams corresponding to monoid multiplication and comonoid comultiplication, and extend them along perpendicular directions in the plane so that they form \emph{branching surfaces}:
\begin{equation*} 
	\input{img/branching_surfaces}
\end{equation*}
Intersect the two branching surfaces and ``slide'' one past another along the vertical axis. As one branching slides past the other branching, the intersection --- a ``string diagram in 3\nbd dimensional space'' --- evolves as in the following figure:
\begin{equation} \label{eq:sliding_surfaces}
	\input{img/sliding_surfaces}
\end{equation}
The two sides of (\ref{eq:non_planar}) arise as planar projections of the two sides of (\ref{eq:sliding_surfaces}). 

This interpretation extends to all ``compatibility'' equations in the tensor product of props, and recasts the tensor product as a \emph{dimension-raising} construction: given planar diagrams, it produces equations of 3\nbd dimensional diagrams. This solves our conundrum about braidings: they are absent in the 3\nbd dimensional picture, and only appear in the 2\nbd dimensional picture as an artefact of planar projection.

What is going on? As first suggested in \cite[Section 2.3]{hadzihasanovic2017algebra}, the correct interpretation of (\ref{eq:sliding_surfaces}) is that it arises from a \emph{smash product of pointed directed spaces}, in a sense that we will soon explain. 

Our model of directed space is a \emph{diagrammatic set} \cite{hadzihasanovic2020diagrammatic}. We developed the theory of diagrammatic sets partly as a foundation for this work, which requires the ability to do rewriting and diagrammatic reasoning in weak higher categories of arbitrary dimension, to an extent that pre-existing frameworks did not seem to support.

The aim of this article is the statement and proof of Theorem \ref{thm:mainthm}: the ``external'' tensor product of pros factors functorially through the smash product of pointed diagrammatic sets. Through this result, we can attribute a precise meaning to our earlier statements, such as the assertion that equation (\ref{eq:non_planar}) arises from (\ref{eq:sliding_surfaces}). On our way, we develop a great deal of combinatorics in order to relate diagrammatic sets, pros, and probs through a web of adjunctions involving a few ``ancillary'' higher structures.

We see this result not as an end point, but as an opening. Far from being just a reinterpretation, our smash product is a vast generalisation of the tensor product of props, and transitively of the Boardman--Vogt product of operads. 

Indeed, pointed diagrammatic sets can embody higher\nbd dimensional theories with non-invertible generators in arbitrarily high dimension.\footnote{As opposed to structures used in homotopical algebra, such as $\infty$\nbd operads, that embody theories with \emph{invertible} higher data.} From these, the smash product generates non-invertible higher\nbd dimensional cells rather than equations. 

Already when applied to 3\nbd dimensional presentations of monoidal theories \cite{mimram2014towards}, not only this construction produces a \emph{presentation} of their tensor product, that is, it produces \emph{oriented} equations, or rewrites; it also produces interesting higher\nbd dimensional \emph{coherence} cells, or syzygies, up to dimension 6.

In higher\nbd dimensional rewriting and universal algebra \cite{guiraud2016polygraphs,guiraud2019rewriting}, coherence is usually pursued with \emph{analytic} methods of rewriting theory such as the computation of critical branchings. We believe that our results may be a gateway to new \emph{synthetic} and compositional methods.

\subsection*{Monoidal theories, directed spaces, and diagrammatic sets}

The connection between monoidal theories and directed spaces is based on four conceptual leaps. The first leap, as mentioned, is the realisation that monoidal categories can embody algebraic theories. 

The second leap is John C.\ Baez and James Dolan's formulation of the \emph{periodic table of $n$\nbd categories}, by which a monoidal category is equivalent to a bicategory with a single 0\nbd cell, but a braided monoidal category is equivalent to a \emph{tricategory} with a single 0\nbd cell and 1\nbd cell \cite{baez1995higher}. A variant of this result implies that a pro is a special kind of 2\nbd category, while a prob is a special kind of Gray\nbd category, a semistrict notion of tricategory \cite{gordon1995coherence}. This matches the intuition that tensoring pros to obtain a prob is dimension-raising.

The third leap is Grothendieck's \emph{homotopy hypothesis}, that ``spaces'', more precisely homotopy types, are modelled in a precise combinatorial sense by higher groupoids. In models where higher groupoids are higher categories whose cells are all invertible in a weak sense, this leaves open the possibility of interpreting higher categories as ``spaces of directed cells''.

The fourth and final leap is Albert Burroni's observation that various notions of presentations by generators and relations, or rewrite systems, can be unified as presentations of ``cell complexes in a category of higher categories'', a notion of directed space with combinatorial structure \cite{burroni1993higher}.

Following the sequence, we can reinterpret a monoidal theory with its set of sorts as a kind of directed 2\nbd dimensional space containing a 1\nbd dimensional cell complex. A braided monoidal theory is the same thing one dimension up. 

These spaces are canonically pointed with the unique 0\nbd cell in the cell complex structure. It is natural, at this point, to wonder about a directed counterpart of the classical smash product of pointed spaces. The correct generalisation replaces the cartesian product of spaces with a version of the \emph{Gray product} \cite{gray2006formal}. 

In \cite[Section 2.3]{hadzihasanovic2017algebra}, we considered smash products in the context of Burroni and Ross Street's theory of polygraphs, based on strict $\omega$\nbd categories. This had the advantage that a theory of Gray products had already been developed \cite{steiner2004omega,ara2020joint}, and that we could identify a pro directly with a pointed 2\nbd category. However, in this context the smash product of pros produces a strict 3\nbd category equivalent not to a braided monoidal category, but to a highly degenerate \emph{commutative} monoidal category.\footnote{This is connected to the known failure of the homotopy hypothesis for strict $\omega$\nbd categories, see \cite[Chapter 4]{simpson2009homotopy}.}

In \cite{hadzihasanovic2020diagrammatic}, based on an abandoned idea of Mikhail Kapranov and Vladimir Voevodsky, we developed the theory of diagrammatic sets as an alternative to polygraphs that would avoid this pitfall and support rewriting and diagrammatic reasoning in weak higher categories. 

While the model of a directed cell in a polygraph is algebraic, diagrammatic sets adopt a combinatorial model. Roughly, a model of a directed $n$\nbd cell is the face poset of a regular CW\nbd decomposition of the topological closed $n$\nbd ball, together with an orientation subdividing the boundary of each cell into an \emph{input} and \emph{output} half, in such a way that the input and output half are also face posets of regular CW\nbd balls, and their orientations determine a composable pasting diagram in a strict $\omega$\nbd category. Common higher-categorical shapes such as oriented simplices and cubes appear as special cases. 

A pleasant outcome of the transition to the combinatorial setup is that Gray products and smash products are much easier to define and compute. On the other hand, the identification of pros or probs with certain pointed diagrammatic sets is non-trivial. The technical core of this article is the definition of a full and faithful diagrammatic nerve of pros (Section \ref{sec:diag_nerve}), and then of a non-trivial\footnote{There is also a ``trivial'' functor passing through strict 3\nbd categories.} realisation functor of diagrammatic sets in Gray\nbd categories (Section \ref{sec:diag_gray}), which allows us to recover the tensor product of two pros as the realisation of the smash product of their nerves.\footnote{More precisely, of their nerves with an orientation reversal in the second factor, as explained in Section \ref{sec:comparison}.}

Diagrammatic sets are related to (nice) topological spaces by a nerve and realisation pair \cite[Section 8.3]{hadzihasanovic2020diagrammatic}, where the nerve realises a version of the homotopy hypothesis. As detailed in Section \ref{sec:smash_diag}, the geometric realisation sends Gray products to cartesian products, so it sends smash products to smash products. 

Altogether, our results amount to the surprising fact that \emph{the tensor product of pros and the smash product of pointed spaces are two facets of the same construction.}

\subsection*{Related work}

We have paid tribute to our main influences on the conceptual side. On the other hand, this article is technically most indebted to three sources.

The first is Hackney and Robertson's article on the category of props \cite{hackney2015category}: beyond the fact that they defined the tensor product of props, our proofs in Section \ref{sec:theories} that certain categories of pros and probs have small limits and colimits are essentially lifted from their work on props, with minor tweaks.

The second is John Power's work on pasting diagrams \cite{power1991pasting}. While our formalisation of diagrams is based on Richard Steiner's combinatorial framework \cite{steiner1993algebra}, an analogue of Power's \emph{domain replacement condition} turns out to be key to the constructions of Section \ref{sec:prodiag}, and the technical Section \ref{sec:combinatorics} is devoted to showing that it holds for all our 3\nbd dimensional diagrams. 

In particular, our Theorem \ref{thm:acyclic_3} is roughly equivalent in meaning to \cite[Theorem 4.14]{power1991pasting}. Interestingly, though, Power's topological setup seems to have completely different strengths and weaknesses compared to our combinatorial setup: when translating Power's proofs, we discovered that every single non-trivial step in his proofs followed easily from our definitions, whereas the trivial steps would require non-trivial proofs, as in Proposition \ref{prop:sim_substitution}. No formal comparison has been made, to our knowledge, between Steiner's and Power's theory, so we think it is justified to consider our results original. 

The third is Simon Forest and Samuel Mimram's article on the rewriting theory of Gray\nbd categories \cite{forest2018coherence}. Not only we learnt from them a convenient axiomatisation of Gray\nbd categories, but the construction of Section \ref{sec:diag_gray} draws directly on their ideas and results and can be seen as a continuation of their work, showing that every diagrammatic set presents a Gray\nbd category.

\subsection*{Structure of the article}

Most of the article is aimed at the proof of Theorem \ref{thm:mainthm}. 

The statement of this result involves many different structures, related via a number of ancillary structures, each in need of definition. Some of these are obscure enough that basic technical aspects could not be found in the literature and had to be developed expressly. That said, we tried to keep redundancy to a minimum by treating a structure as a special case of another whenever possible, even if it results in unconventional choices, such as the definition of reflexive $\omega$\nbd graphs after diagrammatic sets.

Section \ref{sec:structures} recaps the elementary theory of directed complexes, diagrammatic sets, and strict $\omega$\nbd categories. Section \ref{sec:theories} introduces categories of pros, probs, and props, proves some of their properties, and clarifies the relation between probs and Gray\nbd categories. Section \ref{sec:combinatorics} proves some technical results about directed complexes in low dimension. Section \ref{sec:prodiag} is the technical core of the article, constructing the adjunctions that relate diagrammatic sets, pros, and Gray\nbd categories. Section \ref{sec:smash} defines the tensor product of props and the smash product of pointed diagrammatic sets, then proves the main theorem. Section \ref{sec:higher} takes the first steps into diagrammatic sets as a framework for higher\nbd dimensional rewriting and universal algebra.

Every reader should get at least acquainted with the definitions in the first two sections. On a first read, they can then skip to Section \ref{sec:smash}, using the diagram that concludes Section \ref{sec:prodiag} as a reference: most of the time, knowing that certain functors exist and are left or right adjoints should be enough to follow the outline of the proof.

Some readers may be content with understanding the picture (\ref{eq:sliding_surfaces}) and want to stop there. Those interested in higher\nbd dimensional rewriting and universal algebra should move on to Section \ref{sec:higher}.

Section \ref{sec:combinatorics} may appeal to the reader who appreciates the combinatorics of higher\nbd categorical diagrams. The reader who enjoyed Forest and Mimram's \cite{forest2018coherence} can read Section \ref{sec:prodiag} as a follow-up of sorts.

We use the diagrammatic order $f;g$ for the composition of morphisms $f$ and $g$ in a category, but the ``classical'' order $\fun{GF}$ for the composition of functors $\fun{F}$ and $\fun{G}$. Other notational choices are explained when they are introduced.

\subsection*{Outlook and open problems}

Section \ref{sec:higher} is an extended outlook towards our main prospect, namely, the introduction of new compositional methods in higher\nbd dimensional rewriting and universal algebra.

We briefly mention other potential developments. Christoph Dorn, David Reutter, and Jamie Vicary have defined a semistrict algebraic model of $n$\nbd categories, called \emph{associative $n$\nbd categories}, which is equivalent to Gray\nbd categories for $n = 3$ \cite{dorn2018associative,reutter2019high}. It is conceivable that the adjunction of Section \ref{sec:diag_gray} relating diagrammatic sets to Gray\nbd categories may generalise to associative $n$\nbd categories for $n > 3$. We note, however, that our construction uses a property, frame acyclicity, which holds in general up to dimension 3 but fails in dimension 4 or higher, so it is likely that new ideas will be needed.

The theory of diagrammatic sets is based on simple data structures: a cell model $U$ can be encoded as the directed graph $\hasseo{U}$ of \S \ref{dfn:hasseo} together with a grading of its vertices; the Gray product is then encoded as a cartesian product of directed graphs,\footnote{With some edges reversed, depending on the degree of first factor.} while the degrees of vertices are summed. We expect that this setup should lend itself to computational formalisation.

This is of particular interest considering that the theory of associative $n$\nbd categories is formalised in the graphical proof assistant \texttt{homotopy.io}: implementing the constructions of Section \ref{sec:diag_gray} would give us access to visualisations of Gray and smash products through this graphical frontend.

\clearpage

\section{Some higher structures} \label{sec:structures}

\subsection{Directed complexes and diagrammatic sets}

We quickly go through the main definitions, and refer the reader to \cite{hadzihasanovic2020diagrammatic} for an in-depth development.

\begin{dfn}[Graded poset]
Let $P$ be a finite poset with order relation $\leq$. For all elements $x, y \in P$, we say that $y$ \emph{covers} $x$ if $x < y$ and, for all $y' \in X$, if $x < y' \leq y$ then $y' = y$. 

The \emph{Hasse diagram} of $P$ is the finite directed graph $\hasse{P}$ with $\hasse{P}_0 \eqdef P$ as set of vertices and $\hasse{P}_1 \eqdef \{y \to x \mid y \text{ covers } x\}$ as set of edges. 

Let $P_\bot$ be $P$ extended with a least element $\bot$. We say that $P$ is \emph{graded} if, for all $x \in P$, all directed paths from $x$ to $\bot$ in $\hasse{P_\bot}$ have the same length. If this length is $n+1$, we let $\dmn{x} \eqdef n$ be the \emph{dimension} of $x$.
\end{dfn}

\begin{dfn}[Closed and pure subsets]
Let $P$ be a poset and $U \subseteq P$. The \emph{closure} of $U$ is the subset $\clos{U} \eqdef \{x \in P \mid \exists y \in U \; x \leq y\}$ of $P$. We say that $U$ is \emph{closed} if $U = \clos{U}$. 

Suppose $P$ is graded and $U \subseteq P$ is closed. Then $U$ is graded with the partial order inherited from $P$. The \emph{dimension} $\dmn{U}$ of $U$ is $\max\{\dmn{x} \mid x \in U\}$ if $U$ is inhabited, $-1$ otherwise. In particular, $\dmn{\clos{\{x\}}} = \dmn{x}$. 

We say that $U$ is \emph{pure} if its maximal elements all have dimension $\dmn{U}$.
\end{dfn}

\begin{dfn}[Oriented graded poset]
An \emph{orientation} on a finite poset $P$ is an edge-labelling $o\colon \hasse{P}_1 \to \{+,-\}$ of its Hasse diagram.

An \emph{oriented graded poset} is a finite graded poset with an orientation.
\end{dfn}

\begin{dfn} 
We will often let variables $\alpha, \beta$ range implicitly over $\{+,-\}$.
\end{dfn}

\begin{dfn}[Boundaries]
Let $P$ be an oriented graded poset and $U \subseteq P$ a closed subset. Then $U$ inherits an orientation from $P$ by restriction. 

For all $\alpha \in \{+,-\}$ and $n \in \mathbb{N}$, we define
\begin{align*}
	\sbord{n}{\alpha} U & \eqdef \{x \in U \mid \dmn{x} = n \text{ and if $y \in U $ covers $x$, then $o(y \to x) = \alpha$} \}, \\
	\bord{n}{\alpha} U & \eqdef \clos{(\sbord{n}{\alpha} U)} \cup \{ x \in U \mid \text{for all $y \in U$, if $x \leq y$, then $\dmn{y} \leq n$} \}, \\
	\sbord{n} U & \eqdef \sbord{n}{+}U \cup \sbord{n}{-}U, \quad \qquad \quad \bord{n}U \eqdef \bord{n}{+}U \cup \bord{n}{-} U.
\end{align*}
We call $\bord{n}{-}U$ the \emph{input $n$\nbd boundary} and $\bord{n}{+}U$ the \emph{output $n$\nbd boundary} of $U$. 

If $U$ is $(n+1)$\nbd dimensional, we write $\sbord{}{\alpha}U \eqdef \sbord{n}{\alpha}U$ and $\bord{}{\alpha}U \eqdef \bord{n}{\alpha}U$. For each $x \in P$, we write $\sbord{n}{\alpha}x \eqdef \sbord{n}{\alpha}\clos{\{x\}}$ and $\bord{n}{\alpha}x \eqdef \bord{n}{\alpha}\clos{\{x\}}$.
\end{dfn}

\begin{dfn}[Atoms and molecules]
Let $P$ be an oriented graded poset. We define a family of closed subsets of $P$, the \emph{molecules} of $P$, by induction on proper subsets. If $U$ is a closed subset of $P$, then $U$ is a molecule if either
\begin{itemize}
	\item $U$ has a greatest element, in which case we call it an \emph{atom}, or
	\item there exist molecules $U_1$ and $U_2$, both properly contained in $U$, and $n \in \mathbb{N}$ such that $U_1 \cap U_2 = \bord{n}{+}U_1 = \bord{n}{-}U_2$ and $U = U_1 \cup U_2$.
\end{itemize}
We define $\submol$ to be the smallest partial order relation such that, if $U_1$ and $U_2$ are molecules and $U_1 \cap U_2 = \bord{n}{+}U_1 = \bord{n}{-}U_2$, then $U_1, U_2 \submol U_1 \cup U_2$.

We say \emph{$n$\nbd molecule} for an $n$\nbd dimensional molecule. We say that $P$ itself is a molecule if $P \subseteq P$ is a molecule.
\end{dfn}

\begin{dfn}[Spherical boundary] 
An $n$\nbd molecule $U$ in an oriented graded poset \emph{has spherical boundary} if, for all $k < n$,
\begin{equation*}
	\bord{k}{+}U \cap \bord{k}{-}U = \bord{k-1}{}U.
\end{equation*}
\end{dfn}

\begin{dfn}[Regular directed complex]
An oriented graded poset $P$ is a \emph{regular directed complex} if, for all $x \in P$ and $\alpha, \beta \in \{+,-\}$, 
\begin{enumerate}
	\item $\clos\{x\}$ has spherical boundary,
	\item $\bord{}{\alpha}x$ is a molecule, and
	\item $\bord{}{\alpha}(\bord{}{\beta}x) = \bord{n-2}{\alpha}x$ if $n \eqdef \dmn{x} > 1$.
\end{enumerate}
A \emph{map} $f\colon P \to Q$ of regular directed complexes is a function of their underlying sets that satisfies
\begin{equation*}
	\bord{n}{\alpha}f(x) = f(\bord{n}{\alpha}x)
\end{equation*}
for all $x \in P$, $n \in \mathbb{N}$, and $\alpha \in \{+,-\}$. We call an injective map an \emph{inclusion}. With their maps, regular directed complexes form a category $\rdcpx$.
\end{dfn}

\begin{rmk}
As shown in \cite[Section 1.3]{hadzihasanovic2020diagrammatic}, $\rdcpx$ has an initial object, a terminal object, and pushouts of inclusions.
\end{rmk}

\begin{dfn}[Regular molecule]
A \emph{regular molecule} is a molecule which is a regular directed complex. 

By \cite[Proposition 1.38]{hadzihasanovic2020diagrammatic}, if two regular molecules are isomorphic in $\rdcpx$, they are isomorphic in a unique way. As customary in these situations, we will treat isomorphic regular molecules as ``equal'' under appropriate circumstances.
\end{dfn}

\begin{dfn}[Globe]
For each $n \in \mathbb{N}$, let $O^n$ be the poset with a pair of elements $\undl{k}^+, \undl{k}^-$ for each $k < n$ and a greatest element $\undl{n}$, with the partial order defined by $\undl{j}^\alpha \leq \undl{k}^\beta$ if and only if $j \leq k$. This is a graded poset, with $\dmn{\undl{n}} = n$ and $\dmn{\undl{k}^\alpha} = k$ for all $k < n$. 

With the orientation $o(y \to \undl{k}^\alpha) \eqdef \alpha$ if $y$ covers $\undl{k}^\alpha$, $O^n$ becomes a regular directed complex, in particular a regular atom. We call $O^n$ the \emph{$n$\nbd globe}.
\end{dfn}

\begin{dfn}[Pasting of molecules]
Let $U_1, U_2$ be regular molecules and suppose that $\bord{k}{+}U_1$ and $\bord{k}{-}U_2$ are isomorphic in $\rdcpx$. Given an isomorphic copy $V$ of the two, there is a unique span of inclusions $V \incl U_1$ and $V \incl U_2$ whose images are, respectively, $\bord{k}{+}U_1$ and $\bord{k}{-}U_2$. We let $U_1 \cp{k} U_2$ be the pushout
\begin{equation*}
\begin{tikzpicture}[baseline={([yshift=-.5ex]current bounding box.center)}]
	\node (0) at (0,1.5) {$V$};
	\node (1) at (2.5,0) {$U_1 \cp{k} U_2$};
	\node (2) at (0,0) {$U_1$};
	\node (3) at (2.5,1.5) {$U_2$};
	\draw[1cinc] (0) to (3);
	\draw[1cincl] (0) to (2);
	\draw[1cinc] (2) to (1);
	\draw[1cincl] (3) to (1);
	\draw[edge] (1.6,0.2) to (1.6,0.7) to (2.3,0.7);
\end{tikzpicture}
\end{equation*}
in $\rdcpx$. Then $U_1 \cp{k} U_2$ is a regular molecule, decomposing as $U_1 \cup U_2$ with $U_1 \cap U_2 = \bord{k}{+}U_1 = \bord{k}{-}U_2$. 
\end{dfn}

\begin{dfn}[$- \celto -$ construction]
Let $U, V$ be regular $n$\nbd molecules with spherical boundary such that $\bord{}{\alpha}U$ is isomorphic to $\bord{}{\alpha}V$ for all $\alpha \in \{+,-\}$.

Form the pushout $U \cup V$ of the span of inclusions $\bord U \incl U$, $\bord U \incl V$ whose images are $\bord U$ and $\bord V$, respectively. We define $U \celto V$ to be the oriented graded poset obtained from $U \cup V$ by adjoining a greatest element $\top$ with $\bord{}{-}\top \eqdef U$ and $\bord{}{+}\top \eqdef V$. Then $U \celto V$ is an $(n+1)$\nbd dimensional atom with spherical boundary.
\end{dfn}

\begin{dfn}[{$\compos{-}$ construction}]
Let $U$ be a regular molecule with spherical boundary. Then $\bord{}{-}U \celto \bord{}{+}U$ is defined, and we denote it by $\compos{U}$. 
\end{dfn}

\begin{dfn}
There is a unique 0\nbd atom, namely, the $0$\nbd globe $1 \eqdef O^0$, which is also the terminal object of $\rdcpx$.

We define a sequence $\{I_n\}_{n > 0}$ of 1\nbd molecules by 
\begin{equation*}
	I_1 \eqdef O^1, \quad \quad I_n \eqdef I_{n-1} \cp{0} O^1 \text{ for $n > 1$}.
\end{equation*}
Every regular 1\nbd molecule is of the form $I_n$ for some $n > 0$.

For each pair $n, m > 0$, let $U_{n,m} \eqdef (I_n \celto I_m)$. Every regular 2\nbd atom is of the form $U_{n,m}$ for some $n, m > 0$. Regular 2\nbd molecules are then generated by $I_1$ and the $U_{n,m}$ under the pasting operations $\cp{0}, \cp{1}$.
\end{dfn}

\begin{dfn}[Diagrammatic set]
We write $\atom$ for a skeleton of the full subcategory of $\rdcpx$ on the atoms of every dimension. 

A \emph{diagrammatic set} is a presheaf on $\atom$. Diagrammatic sets and their morphisms of presheaves form a category $\dgmset$.
\end{dfn}

\begin{comm}
The definition in \cite{hadzihasanovic2020diagrammatic} is relative to a fixed ``convenient'' class of molecules; for simplicity, here we pick the class of all molecules with spherical boundary.
\end{comm}

\begin{dfn}
We identify $\atom$ with a full subcategory $\atom \incl \dgmset$ via the Yoneda embedding. With this identification, we use morphisms in $\dgmset$ as our notation for both elements and structural operations of a diagrammatic set $X$:
\begin{itemize}
	\item $x \in X(U)$ becomes $x\colon U \to X$, and
	\item for each map $f\colon V \to U$ in $\atom$, $X(f)(x) \in X(V)$ becomes $f;x\colon V \to X$.
\end{itemize}
As described in \cite[\S 4.4]{hadzihasanovic2020diagrammatic}, the embedding $\atom \incl \dgmset$ extends to an embedding $\rdcpx \incl \dgmset$.
\end{dfn}

\begin{dfn}[Diagrams and cells] 
Let $X$ be a diagrammatic set and $U$ a regular molecule. A \emph{diagram of shape $U$ in $X$} is a morphism $x\colon U \to X$. It is \emph{composable} if $U$ has spherical boundary and a \emph{cell} if $U$ is an atom. For all $n \in \mathbb{N}$, we say that $x$ is \emph{$n$\nbd diagram} or an \emph{$n$\nbd cell} when $\dmn{U} = n$.

If $U$ decomposes as $U_1 \cp{k} U_2$, we write $x = x_1 \cp{k} x_2$ for $x_i \eqdef \imath_i;x$, where $\imath_i$ is the inclusion $U_i \incl U$ for $i \in \{1,2\}$. This extends associatively to $n$\nbd ary decompositions for $n > 2$.

If $x\colon U \to X$ is a diagram in $X$ and $f\colon X \to Y$ a morphism of diagrammatic sets, we may write $f(x)$ for the diagram $x;f\colon U \to Y$. 
\end{dfn}

\begin{dfn}[Boundaries of diagrams] 
Let $X$ be a diagrammatic set, $x\colon U \to X$ a diagram, and let $\imath_k^\alpha\colon \bord{k}{\alpha}U \incl U$ be the inclusions of the $k$\nbd boundaries of $U$. The \emph{input $k$\nbd boundary} of $x$ is the diagram $\bord{k}{-}x \eqdef \imath_k^-;x$ and the \emph{output $k$\nbd boundary} of $x$ is the diagram $\bord{k}{+}x \eqdef \imath_k^+;x$. We may omit the index $k$ when $k = \dmn{U} -1$. 

We write $x\colon y^- \celto y^+$ to express that $\bord{k}{\alpha}x = y^\alpha$ for each $\alpha \in \{+,-\}$, and say that $x$ is of \emph{type} $y^- \celto y^+$. We say that two diagrams $x_1, x_2$ are \emph{parallel} if they have the same type.
\end{dfn}

\begin{dfn}
A 1\nbd cell $a$ in a diagrammatic set has shape $I_1$. A 2\nbd cell $\varphi$ has shape $U_{n,m}$ for some $n, m > 0$, so it is of type 
\begin{equation*}
	a_1 \cp{0} \ldots \cp{0} a_n \celto b_1 \cp{0} \ldots \cp{0} b_m
\end{equation*}
for some 1\nbd cells $a_1,\ldots,a_n, b_1,\ldots, b_m$. We may depict such cells as string diagrams
\begin{equation*}
	\input{img/generic_2cell}
\end{equation*}
where a lighter shade indicates a repeated pattern. Each region bounded by wires corresponds to a potentially different 0\nbd cell; in practice, we will mostly work with diagrammatic sets that have a single 0\nbd cell. Labels will be omitted when irrelevant, or implied by the shape of a cell.

A 2\nbd diagram decomposes into 1\nbd cells and 2\nbd cells under the $\cp{0}, \cp{1}$ operations. In string diagrams, $\cp{0}$ is horizontal juxtaposition and $\cp{1}$ is vertical juxtaposition with the output wires of one diagram connecting to the input wires of another. For example,
\begin{equation*}
	\input{img/generic_2diagram}
\end{equation*}
depicts a generic 2\nbd diagram of the form
\begin{equation*}
	(\varphi \cp{0} a_1 \cp{0} \ldots \cp{0} a_n) \cp{1} (b_1 \cp{0} \ldots \cp{0} b_m \cp{0} \psi)
\end{equation*}
for some 2\nbd cells $\varphi, \psi$ and 1\nbd cells $a_1, \ldots, a_n, b_1,\ldots, b_m$.

Observe that, in our setting, there is no need to attribute a topological nature to string diagrams, \emph{\`a la} Joyal and Street \cite{joyal1991geometry}: they should instead be interpreted as compact encodings of regular molecules -- a discrete, combinatorial structure -- and their morphisms to diagrammatic sets. 

A 2\nbd diagram is composable if and only if it is \emph{connected} as a string diagram. For example, of the diagrams
\begin{equation*}
	\input{img/nonconnected_2diagram}
\end{equation*}
only the first one is composable. 

We may depict a 3\nbd diagram as a \emph{sequence of rewrites} on composable subdiagrams of a diagram. For example, a diagrammatic set with a single 0\nbd cell, a single 1\nbd cell, and 3\nbd cells $\varphi, \psi$ of the form
\begin{equation*}
	\input{img/frobenius}
\end{equation*}
admits a 3\nbd diagram of the form
\begin{equation} \label{eq:frobenius_diagram}
	\input{img/frobenius_diagram}
\end{equation}
where the input boundary of each 3\nbd cell is highlighted in pink.

We will also use string diagrams to describe certain regular molecules directly. This is justified by the interpretation of a molecule $U$ as the ``tautologous'' diagram $\idd{U}\colon U \to U$ in $\dgmset$.
\end{dfn}

\begin{dfn}[Dual diagrammatic set]
Let $U$ be a regular atom. The oriented graded poset $\oppall{U}$ with the same underlying poset as $U$ and the opposite orientation $\oppall{o}(y \to x) \eqdef -o(y \to x)$ is a regular atom. If $f\colon U \to V$ is a map in $\atom$, its underlying function also defines a map $\oppall{f}\colon \oppall{U} \to \oppall{V}$. This determines an involution $\oppall{-}$ on $\atom$. 

Let $X$ be a diagrammatic set. Its \emph{dual} $\oppall{X}$ is the diagrammatic set defined by $\oppall{X}(-) \eqdef X(\oppall{-})$. This extends to morphisms in the obvious way, and extends the involution on $\atom$ to an involution on $\dgmset$.
\end{dfn}

\begin{rmk}
If $x\colon U \to X$ is a 2\nbd diagram, the depiction of $\oppall{x}\colon \oppall{U} \to \oppall{X}$ in string diagrams is the horizontal and vertical reflection of the depiction of $x$. 
\end{rmk}


\subsection{Higher-categorical structures}

\begin{dfn}[Reflexive $\omega$\nbd graph]
Let $\cat{O}$ be the full subcategory of $\atom$ whose objects are the globes $O^n$. For all $n$ and $k < n$,
\begin{itemize}
	\item the $k$\nbd boundary inclusions $\imath_k^+, \imath_k^-$ are the only inclusions of $O^k$ into $O^n$;
	\item the map $\tau\colon O^n \surj O^k$, defined by $\tau(\undl{n}),\tau(\undl{j}^\alpha) \eqdef \undl{k}$ if $j \geq k$ and $\tau(\undl{j}^\alpha) \eqdef \undl{j}^\alpha$ if $j < k$, is the only surjective map from $O^n$ onto $O^k$. 
\end{itemize}
A \emph{reflexive $\omega$\nbd graph} is a presheaf $X$ on $\cat{O}$. With their morphisms of presheaves, reflexive $\omega$\nbd graphs form a category $\omegagphref$.
\end{dfn}

\begin{dfn}
The embedding $\cat{O} \incl \atom$ induces a restriction functor $\dgmset \to \omegagphref$ with a full and faithful left adjoint $\omegagphref \incl \dgmset$; we can thus identify reflexive $\omega$\nbd graphs with particular diagrammatic sets, and use for them the same terminology and notation.

Because all $n$\nbd cells in a reflexive $\omega$\nbd graph $X$ have the same shape $O^n$, we leave it implicit and write $X_n \eqdef X(O^n)$.
\end{dfn}

\begin{dfn}[Units]
Let $x$ be a $k$\nbd cell in a reflexive $\omega$\nbd graph $X$. For $n > k$, we let $\eps{n}x \eqdef \tau;x$ where $\tau$ is the unique surjective map $O^n \surj O^k$. We call $\eps{n}x$ a \emph{unit} on $x$. We may omit the index when $n = k+1$. 
\end{dfn}

\begin{dfn}[Rank of a cell]
Let $x$ be an $n$\nbd cell in a reflexive $\omega$\nbd graph. The \emph{rank} $\rank{x}$ of $x$ is defined inductively on $n$ as follows:
\begin{itemize}
	\item if $n = 0$, then $\rank{x} \eqdef 0$;
	\item if $n > 0$, if $x = \eps{}y$ for an $(n-1)$\nbd cell $y$, then $\rank{x} \eqdef \rank{y}$, otherwise $\rank{x} \eqdef n$.
\end{itemize}
\end{dfn}

\begin{dfn}[Partial $\omega$\nbd category]
A \emph{partial $\omega$\nbd category} is a reflexive $\omega$\nbd graph $X$ together with partial $k$-composition operations
\begin{equation*}
	\cp{k}\colon X_n \times X_n \pfun X_n
\end{equation*}
for all $n \in \mathbb{N}$ and $k < n$, satisfying the following axioms:
\begin{enumerate}
	\item for all $n$\nbd cells $x, y$ and all $k < n$ such that $x \cp{k} y$ is defined, 
	\begin{equation*}
		\bord{k}{+}x = \bord{k}{-}y \text{ and } \eps{}(x \cp{k} y) = \eps{}x \cp{k} \eps{}y;
	\end{equation*}
	\item for all $n$\nbd cells $x$ and all $k < n$, the $k$\nbd compositions
	\begin{equation*}
		x \cp{k} \eps{n}(\bord{k}{+}x) \text{ and } \eps{n}(\bord{k}{-}x) \cp{k} x
	\end{equation*}
	are defined and equal to $x$;
	\item for all $(n+1)$\nbd cells $x, y$ and $k < n$, whenever the left-hand side is defined, the right-hand side is defined and
	\begin{align*}
		\bord{}{-}(x \cp{n} y) & = \bord{}{-} x, \\
		\bord{}{+}(x \cp{n} y) & = \bord{}{+} y, \\
		\bord{}{\alpha}(x \cp{k} y) & = \bord{}{\alpha} x \cp{k} \bord{}{\alpha} y;
	\end{align*}
	\item for all cells $x, y, z$ and all $k$ such that both sides are defined,
	\begin{equation*}
		(x \cp{k} y) \cp{k} z = x \cp{k} (y \cp{k} z);
	\end{equation*}
	\item for all cells $x, y, x', y'$, all $n$ and all $k < n$ such that both sides are defined,
	\begin{equation} \label{eq:interchange}
		(x \cp{n} x') \cp{k} (y \cp{n} y') = (x \cp{k} y) \cp{n} (x' \cp{k} y').
	\end{equation}
\end{enumerate} 
A \emph{functor} $f\colon X \to Y$ of partial $\omega$\nbd categories is a morphism of the underlying reflexive $\omega$\nbd graphs such that, for all cells $x, y$ in $X$, if $x \cp{n} y$ is defined in $X$ then $f(x) \cp{n} f(y)$ is defined and equal to $f(x \cp{n} y)$ in $Y$. Partial $\omega$\nbd categories and their functors form a category $\pomegacat$. 
\end{dfn}

\begin{dfn}
We will generally confuse the notation for a $k$\nbd cell and the units on it: for example, if $x$ is an $n$\nbd cell and $y$ a $k$\nbd cell, $k < n$, such that $x \cp{m} \eps{n} y$ is defined, we will write $x \cp{m} y \eqdef x \cp{m} \eps{n}y$.
\end{dfn}

\begin{dfn}[$\omega$\nbd Precategory]
An \emph{$\omega$\nbd precategory} is a partial $\omega$\nbd category $X$ such that, for all $n$\nbd cells $x, y$ in $X$, the $k$\nbd composition $x \cp{k} y$ is defined if and only if $\bord{k}{+}x = \bord{k}{-}y$ and $\min \{\rank{x}, \rank{y}\} \leq k+1$. With their functors, $\omega$\nbd precategories form a category $\omegaprecat$.
\end{dfn}

\begin{dfn}[$\omega$\nbd Category]
An \emph{$\omega$\nbd category} is a partial $\omega$\nbd category such that, for all $n$\nbd cells $x, y$ in $X$, the $k$\nbd composition $x \cp{k} y$ is defined if and only if $\bord{k}{+}x = \bord{k}{-}y$. With their functors, $\omega$\nbd categories form a category $\omegacat$.
\end{dfn}

\begin{dfn}
The inclusion $\omegacat \incl \pomegacat$ has a left adjoint $-^*\colon \pomegacat \to \omegacat$; if $X$ is a partial $\omega$\nbd category, then $X^*$ is the free $\omega$\nbd category on the underlying reflexive $\omega$\nbd graph of $X$, quotiented by all the equations involving compositions that are defined in $X$. 

By \cite[Proposition 1.23]{hadzihasanovic2020diagrammatic}, if $P$ is a regular directed complex, there is a  partial $\omega$\nbd category $\mol{}{P}$ where
\begin{enumerate}
	\item the set $\mol{n}{P}$ of $n$-cells is the set of molecules $U \subseteq P$ with $\dmn{U} \leq n$,
	\item $\bord{k}{\alpha}\colon \mol{n}{P} \to \mol{k}{P}$ is $U \mapsto \bord{k}{\alpha}U$,
	\item $\eps{n}\colon \mol{k}{P} \to \mol{n}{P}$ is $U \mapsto U$,
	\item $U \cp{k} V$ is defined if and only if $U \cap V = \bord{k}{+}U = \bord{k}{-}V$, and in that case it is equal to $U \cup V$.
\end{enumerate}
We will write $W = U \cp{k} V$ to indicate that $W$ is a molecule decomposing as $U \cup V$, where $U$ and $V$ are molecules with $U \cap V = \bord{k}{+}U = \bord{k}{-}V$.

As detailed in [Section 7, \emph{ibid.}], the assignment $P \mapsto \mol{}{P}^*$ extends to a functor $\mol{}{-}^*\colon \rdcpx \to \omegacat$ which is faithful and injective on objects.
\end{dfn}

\begin{dfn}[Principal composition] \label{dfn:principal}
Let $x, y$ be $n$\nbd cells in an $\omega$\nbd precategory or an $\omega$\nbd category, and let $k \eqdef \min \{\rank{x},\rank{y}\} - 1$. If $\bord{k}{+}x = \bord{k}{-}y$, the \emph{principal composition} of $x$ and $y$ is
\begin{equation*}
	x \cp{} y \eqdef x \cp{k} y.
\end{equation*}
\end{dfn}

\begin{comm}
Given cells $x, y$ in an $\omega$\nbd precategory, suppose that $x \cp{k} y$ is defined. Then either 
\begin{itemize}
	\item $\min \{\rank{x},\rank{y}\} = k+1$, in which case $x \cp{k} y = x \cp{} y$, or 
	\item $\rank{x} \leq k$, in which case $x \cp{k} y = y$, or 
	\item $\rank{y} \leq k$, in which case $x \cp{k} y = x$. 
\end{itemize}
This implies that $\omega$\nbd precategories admit an axiomatisation involving only principal compositions, at the cost of explicitly handling some corner cases in the axioms.

Moreover, the two sides of (\ref{eq:interchange}) can both be defined only if 
\begin{itemize}
	\item both $x$ and $x'$ have rank lower or equal than $k+1$, in which case (\ref{eq:interchange}) is equivalent to 
	\begin{equation*}
		x \cp{k} (y \cp{n} y') = (x \cp{k} y) \cp{n} (x \cp{k} y'),
	\end{equation*}
	or dually
	\item both $y$ and $y'$ have rank lower or equal than $k+1$, in which case (\ref{eq:interchange}) is equivalent to 
	\begin{equation*}
		(x \cp{n} x') \cp{k} y = (x \cp{k} y) \cp{n} (x' \cp{k} y).
	\end{equation*}
\end{itemize}
These two observations can be used to establish an equivalence between our definition of $\omega$\nbd precategory and the one given in \cite[Section 4.1]{forest2018coherence}.
\end{comm}

\begin{dfn}
There is a forgetful functor $\fun{U}\colon \omegacat \to \omegaprecat$ which makes $x \cp{k} y$ undefined whenever $\min \{\rank{x},\rank{y}\} > k+1$.
\end{dfn}

\begin{prop} \label{prop:omega_preomega}
The functor $\fun{U}\colon \omegacat \to \omegaprecat$ is full and faithful. Its image consists of the $\omega$\nbd precategories satisfying
\begin{equation} \label{eq:preinterchange}
	(x \cp{k-1} \bord{k}{-} y) \cp{k} (\bord{k}{+}x \cp{k-1} y) = (\bord{k}{-}x \cp{k-1} y) \cp{k} (x \cp{k-1} \bord{k}{+} y)
\end{equation}
for all cells $x, y$ with $\min \{\rank{x},\rank{y}\} = k + 1$ and $\bord{k-1}{+}x = \bord{k-1}{-}y$.
\end{prop}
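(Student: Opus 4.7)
The plan is to reduce everything to a single algebraic identity that emerges from the interchange law once units are substituted into two of its four slots. Specialising (\ref{eq:interchange}) to indices $k$ and $k+1$ and arguments $(x, \bord{k+1}{+}x, \bord{k+1}{-}y, y)$ (the boundary cells implicitly treated as units at the top dimension), then collapsing the resulting $\cp{k+1}$ compositions with units via axiom (2), I obtain
\begin{equation*}
    x \cp{k} y \,=\, (x \cp{k} \bord{k+1}{-}y) \cp{k+1} (\bord{k+1}{+}x \cp{k} y)
\end{equation*}
in any $\omega$\nbd category whenever $x \cp{k} y$ is defined and $x, y$ have dimension greater than $k+1$; the symmetric substitution gives the dual identity with $\bord{k+1}{\pm}$ on the opposite factors. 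After renaming $k$ as $k-1$, equating the two decompositions gives exactly equation (\ref{eq:preinterchange}). Faithfulness of $\fun{U}$ is immediate: the functor acts as the identity on underlying reflexive $\omega$\nbd graphs, and a functor of $\omega$\nbd categories is determined by its action on cells.

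For fullness, fix $\omega$\nbd categories $X, Y$ and a morphism $f\colon\fun{U}X \to \fun{U}Y$ of $\omega$\nbd precategories; I verify $f(x \cp{k} y) = f(x) \cp{k} f(y)$ by induction on $d - k$, where $d \eqdef \max\{\dmn{x}, \dmn{y}\}$. The base case $d = k+1$ forces $\min\{\rank{x}, \rank{y}\} \leq k+1$, so the composition is already defined in $\fun{U}X$ and is preserved by hypothesis. For the inductive step, I apply the reduction formula to both $x \cp{k} y$ in $X$ and $f(x) \cp{k} f(y)$ in $Y$: the two inner $\cp{k}$ compositions involve a factor of rank at most $k+1$ and are therefore preserved by $f$ directly, while the outer $\cp{k+1}$ falls under the strictly smaller induction parameter $d - (k+1)$, so the inductive hypothesis applies and $f$ preserves it as well.

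For the image characterisation, the forward inclusion is the derivation above. For the converse, let $X'$ be an $\omega$\nbd precategory satisfying (\ref{eq:preinterchange}); I extend its operations to an $\omega$\nbd category structure on the same underlying reflexive $\omega$\nbd graph by declaring $x \cp{k} y$, for otherwise-undefined pairs, to be the right-hand side of the reduction formula, unfolded recursively on $d - k$. The assumption (\ref{eq:preinterchange}) is precisely what is needed to ensure that the left- and right-handed reductions yield the same value, and this propagates to consistency of nested unfoldings. It then remains to verify the partial $\omega$\nbd category axioms for the extended operations: the unit, boundary, and compatibility axioms (1)--(3) transfer directly from $X'$, while associativity (4) and the unrestricted interchange law (5) need to be rechecked.

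The main obstacle is precisely this last step --- establishing associativity and the full interchange law (\ref{eq:interchange}) for pairs whose ranks exceed the precategory threshold. Both sides of each identity must be unfolded by the reduction formula on every composition that appears, and the resulting long nested expressions have to be reconciled using the precategory versions of associativity and interchange together with repeated applications of (\ref{eq:preinterchange}). I would organise this as a simultaneous induction on the dimension-rank parameters of all subcompositions. As foreshadowed in the preceding Comment, this verification essentially coincides with the one carried out in Forest and Mimram's axiomatisation \cite{forest2018coherence}, which provides a convenient template for the bookkeeping and after which the equivalence between the two presentations falls into place.
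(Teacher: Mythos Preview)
Your proposal is correct and follows essentially the same strategy as the paper: derive the reduction identity $x \cp{k-1} y = (x \cp{k-1} \bord{k}{-}y) \cp{k} (\bord{k}{+}x \cp{k-1} y)$ from interchange plus units, use it both to show (\ref{eq:preinterchange}) is necessary and to extend an $\omega$\nbd precategory satisfying (\ref{eq:preinterchange}) to an $\omega$\nbd category, and leave the verification of associativity and full interchange as a routine induction. The only cosmetic difference is the recursion parameter: you induct on $d-k$ with $d = \max\{\dmn x,\dmn y\}$ and reduce via $\bord{k+1}$, whereas the paper fixes the composition index and inducts on $m = \min\{\rank x,\rank y\}-k-1$, reducing via $\bord{k+m}$; both schemes terminate and yield the same extension.
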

\begin{proof}
First of all, observe that both sides of (\ref{eq:preinterchange}) are defined in an $\omega$\nbd precategory when $x, y$ satisfy the conditions of the statement. If this precategory is of the form $\fun{U}X$ for some $\omega$\nbd category $X$, then both sides are equal to $x \cp{k-1} y$ in $X$, so (\ref{eq:preinterchange}) is satisfied. 

Let $X'$ be an $\omega$\nbd precategory such that (\ref{eq:preinterchange}) holds for all cells in $X'$ in the conditions of the statement. We will define an $\omega$\nbd category $X$ such that $\fun{U}X = X'$, which necessarily has the same underlying reflexive $\omega$\nbd graph as $X'$.

Let $x, y$ be cells such that $\bord{k-1}{+}x = \bord{k-1}{-}y$ for some $k > 0 $. We must define $x \cp{k-1} y$ in $X$. If $\min \{\rank{x},\rank{y}\} \leq k$, then $x \cp{k-1} y$ is defined in $X'$, and we declare it to be the same in $X$.

Otherwise, $\min \{\rank{x},\rank{y}\} = k + 1 + m$ for some $m \geq 0$. If $m = 0$, we define $x \cp{k-1} y$ to be equal to either side of (\ref{eq:preinterchange}). For $m > 0$, observe that 
\begin{equation*}
	\min \{\rank{x}, \rank{\bord{k+m}{\alpha}y}\}, \min \{\rank{\bord{k+m}{\alpha}x}, \rank{y}\} \leq k + m
\end{equation*}
for all $\alpha \in \{+,-\}$, but 
\begin{equation*}
	\bord{k-1}{+}x = \bord{k-1}{+}(\bord{k+m}{\alpha}x) = \bord{k-1}{-}(\bord{k+m}{\alpha}y) = \bord{k-1}{-}y.
\end{equation*}
We may thus assume, inductively, that $x \cp{k-1} \bord{k+m}{\alpha}y$ and $\bord{k+m}{\alpha}x \cp{k-1} y$ have already been defined, and let
\begin{equation*}
	x \cp{k-1} y \eqdef (x \cp{k-1} \bord{k+m}{-}y) \cp{k+m} (\bord{k+m}{+}x \cp{k-1} y).
\end{equation*}
It is an exercise to show, by induction, that this is equal to 
\begin{equation*}
	(\bord{k+m}{-}x \cp{k-1} y) \cp{k+m} (x \cp{k-1} \bord{k+m}{+}y)
\end{equation*}
and derive that $X$ is an $\omega$\nbd category. Because all the definitions are enforced by the axioms of $\omega$\nbd categories, $X$ is unique with the property that $\fun{U}X = X'$. 

Since all compositions in $X$ are defined in terms of compositions in $\fun{U}X$, every functor $f'\colon \fun{U}X \to \fun{U}Y$ of $\omega$\nbd precategories lifts to a functor $f\colon X \to Y$ of $\omega$\nbd categories. This proves fullness; faithfulness is immediate from the fact that $f$ and $\fun{U}f$ have the same underlying morphism of reflexive $\omega$\nbd graphs.
\end{proof}

\begin{dfn}[Skeleta]
Let $X$ be an $\omega$\nbd (pre)category, $n \in \mathbb{N}$. The \emph{$n$\nbd skeleton} $\skel{n}{X}$ of $X$ is the restriction of $X$ to cells of rank $\leq n$. We let $\skel{-1}{X} \eqdef \emptyset$, the initial $\omega$\nbd (pre)category. The $n$\nbd skeleton operation extends functorially to morphisms in the obvious way.
\end{dfn}

\begin{dfn}[$n$\nbd Category]
An $\omega$\nbd (pre)category is an \emph{$n$\nbd (pre)category} if it is equal to its $n$\nbd skeleton. An $n$\nbd (pre)category is determined by its restriction to $k$\nbd cells with $k \leq n$.

Let $\nprecat{n}$ denote the full subcategory of $\omegaprecat$ and $\ncat{n}$ the full subcategory of $\omegacat$ on the $n$\nbd (pre)categories. In both cases, the inclusion of subcategories has a right adjoint and $\skel{n}{}$ is the comonad induced by the adjunction.

The inclusion also has a left adjoint, inducing a monad $\coskel{n}{}$: given $X$, the $n$\nbd (pre)category $\coskel{n}{X}$ is obtained from $\skel{n}{X}$ by identifying all pairs of $n$\nbd cells $x,y$ such that there exists an $(n+1)$\nbd cell $e\colon x \celto y$ in $X$. 
\end{dfn}

\begin{rmk}
Both $\omegaprecat$ and $\omegacat$ are categories of algebras for finitary monads on $\omegagphref$, a presheaf topos. By the Remark at the end of \cite[\S 2.78]{adamek1994locally}, they are locally finitely presentable, and in particular have all small limits and colimits. The same applies to their reflective subcategories $\nprecat{n}$ and $\ncat{n}$ for all $n \in \mathbb{N}$.
\end{rmk}

\begin{dfn}[Polygraph]
Let $\bord O^n \eqdef \skel{n-1}{O^n}$. 

A \emph{(pre)polygraph} is an $\omega$\nbd (pre)category $X$ together with a set $\gen{X} = \sum_{n \in \mathbb{N}}\gen{X}_n$ of \emph{generating} cells such that, for all $n \in \mathbb{N}$, 
\begin{equation*}
\begin{tikzpicture}[baseline={([yshift=-.5ex]current bounding box.center)}]
	\node (0) at (-.5,1.5) {$\coprod_{x \in \gen{X}_n} \bord O^n$};
	\node (1) at (2.5,0) {$\skel{n}{X}$};
	\node (2) at (-.5,0) {$\skel{n-1}{X}$};
	\node (3) at (2.5,1.5) {$\coprod_{x \in \gen{X}_n} O^n$};
	\draw[1cinc] (0) to (3);
	\draw[1c] (0) to (2);
	\draw[1cinc] (2) to (1);
	\draw[1c] (3) to node[auto,arlabel] {$(x)_{x \in \gen{X}_n}$} (1);
	\draw[edge] (1.6,0.2) to (1.6,0.7) to (2.3,0.7);
\end{tikzpicture}
\end{equation*}
is a pushout in $\omegaprecat$ or $\omegacat$. 

An \emph{$n$\nbd (pre)polygraph} is a (pre)polygraph whose underlying $\omega$\nbd (pre)category is an $n$\nbd (pre)category. In an $n$\nbd (pre)polygraph, $\gen{X}_m = \emptyset$ for $m > n$.
\end{dfn}

\begin{rmk}
In every (pre)polygraph $(X,\gen{X})$, all cells in $\gen{X}_n$ have rank $n$. The set $\gen{X}_0$ is the entire set $X_0$ of 0-cells.
\end{rmk}

\begin{exm}
Both 1-precategories and 1-categories coincide with small categories; a 1-(pre)polygraph is a category free on a graph.
\end{exm}

\section{Monoidal theories} \label{sec:theories}

\subsection{Planar monoidal theories}

For us, monoidal theories are embodied by \emph{pros}. For technical reasons, we treat these as a special case of a more general structure of \emph{bicoloured pro}, whose relation to pros is the same as the relation of bicategories to monoidal categories.

\begin{dfn}[Bicoloured pro] 
A \emph{bicoloured pro} is a 2\nbd category $T$ together with the structure of a 1\nbd polygraph $(\skel{1}{T}, \gen{T})$ on its 1\nbd skeleton.

A \emph{morphism} $f\colon (T,\gen{T}) \to (S,\gen{S})$ of bicoloured pros is a functor $f\colon T \to S$ of 2\nbd categories with the property that $f(a) \in \eps{}(\gen{S}_0) \cup \gen{S}_1$ for all $a \in \gen{T}_1$. Bicoloured pros and their morphisms form a category $\bipro$.
\end{dfn}

\begin{dfn}[Strict monoidal category]
A \emph{strict monoidal category} is a 2\nbd category with a single 0-cell.
\end{dfn}

\begin{comm}
We are using the characterisation of strict monoidal categories in \cite[Theorem 4.1]{cheng2007periodic} as a definition.
\end{comm}

\begin{dfn}
When some object has a single 0\nbd cell, we denote that 0\nbd cell by $\bullet$.
\end{dfn}

\begin{dfn}[Pro] 
A \emph{pro} is a bicoloured pro with a single 0-cell. We let $\pro$ denote the full subcategory of $\bipro$ on pros.
\end{dfn}

\begin{comm}
That is, a pro is a bicoloured pro whose underlying 2\nbd category is a strict monoidal category.
\end{comm}

\begin{comm}
Equivalently, a bicoloured pro $(T, \gen{T})$ is a 2\nbd category whose 1\nbd cells of type $x \celto y$ are finite paths from $x$ to $y$ in the graph
\begin{equation*}
\begin{tikzpicture}[baseline={([yshift=-.5ex]current bounding box.center)}]
	\node (0) at (0,0) {$\gen{T}_0$};
	\node (1) at (2,0) {$\gen{T}_1$.};
	\draw[1c] (1.west |- 0,.15) to node[auto,swap,arlabel] {$\bord{}{+}$} (0.east |- 0,.15);
	\draw[1c] (1.west |- 0,-.15) to node[auto,arlabel] {$\bord{}{-}$} (0.east |- 0,-.15);
\end{tikzpicture}
\end{equation*}
When $x = y$, the path is allowed to be of length 0, in which case it is interpreted as the unit $\eps{}x$. 

If $T$ has a single 0-cell, these are the same as finite ordered lists of elements of $\gen{T}_1$, that is, elements of the free monoid on $\gen{T}_1$. Seeing a pro as the embodiment of a monoidal theory, we interpret the elements of $\gen{T}_1$ as \emph{sorts}, and 2\nbd cells 
\begin{equation*}
	\varphi\colon (a_1, \ldots, a_n) \celto (b_1, \ldots, b_m)
\end{equation*}
as \emph{operations} taking $n$ inputs of sorts $a_1, \ldots, a_n$ and returning $m$ outputs of sorts $b_1, \ldots, b_m$. 

In particular, if the monoidal theory is one-sorted, then $\gen{T}_1$ is a singleton, the 1\nbd cells of $T$ are in bijection with natural numbers, and the type of a 2\nbd cell is fixed by the \emph{arity} of its input and its output. In that case, we may write $\varphi\colon (n) \celto (m)$ for an operation with $n$ inputs and $m$ outputs.

If we forget the structure of 1\nbd polygraph on a pro, we get a strict monoidal category $T$, which we may see as a special kind of monoidal category. Given a monoidal category $\cat{M}$, a \emph{model} in $\cat{M}$ of the monoidal theory $(T, \gen{T})$ is a strong monoidal functor from $T$ to $\cat{M}$.
\end{comm}

\begin{rmk}
Our definition of a morphism of pros allows a sort to be ``collapsed'' by mapping it onto a unit. We make this choice for technical reasons, even though it seems more common to disallow it, as done in \cite{hackney2015category}. This choice does not have any impact on models.
\end{rmk}

\begin{exm}
The monoid $\mathbb{N}$ of natural numbers with addition, seen as a strict monoidal category with no rank-2 cells, is a one-sorted pro. This corresponds to the ``trivial'' theory of \emph{objects} in a monoidal category.
\end{exm}

\begin{exm}
There is a one-sorted pro $\theory{Mon}$ whose 1\nbd cell $(n)$ is identified with the finite ordinal $\{0 < \ldots < n-1\}$ for each $n \in \mathbb{N}$, and 2\nbd cells $\varphi\colon (n) \celto (m)$ are order-preserving maps. The 0\nbd composite of $\varphi\colon (n) \celto (m)$ and $\psi\colon (p) \celto (q)$ is given by ``concatenation'', that is,
\begin{equation*}
	\varphi\cp{0}\psi\colon (n+p) \celto (m+q), \quad \quad k \mapsto \begin{cases} \varphi(k) & \text{if $k < n$}, \\
	m+\psi(k-n) & \text{if $k \geq n$}.
	\end{cases}
\end{equation*}
This corresponds to the theory of \emph{monoids}.
\end{exm}

\begin{exm}
If $(T, \gen{T})$ is a pro, then $(\coo{T}, \coo{\gen{T}})$, obtained by reversing the orientation of all 2\nbd cells of $T$, is also a pro. For example, $\coo{\theory{Mon}}$ is the theory of \emph{comonoids}.
\end{exm}

\begin{exm}
Let $\theory{Bimod}$ be the strict monoidal category whose 1\nbd cells are injective maps $\imath\colon (k) \incl (n)$ in $\theory{Mon}$, 2\nbd cells $\varphi\colon (\imath\colon (k) \incl (n)) \celto (j\colon (k) \incl (m))$ are commutative triangles
\begin{equation*} 
\begin{tikzpicture}[baseline={([yshift=-.5ex]current bounding box.center)}]
	\node (0) at (-1.5,-1) {$(n)$};
	\node (1) at (0,0) {$(k)$};
	\node (2) at (1.5,-1) {$(m)$,};
	\draw[1cincl] (1) to node[auto,swap,arlabel] {$\imath$} (0);
	\draw[1cinc] (1) to node[auto,arlabel] {$j$} (2);
	\draw[1c] (0) to node[auto,swap,arlabel] {$\varphi$} (2);
\end{tikzpicture}
\end{equation*}
and the 0\nbd composite of a pair of 2\nbd cells 
\begin{align*}
	\varphi\colon & (\imath\colon (k) \incl (n)) \celto (j\colon (k) \incl (m)), \\
	\psi\colon & (\imath'\colon (\ell) \incl (p)) \celto (j'\colon (\ell) \incl (q))
\end{align*}
is the commutative triangle
\begin{equation*} 
\begin{tikzpicture}[baseline={([yshift=-.5ex]current bounding box.center)}]
	\node (0) at (-1.5,-1) {$(n+p)$};
	\node (1) at (0,0) {$(k+\ell)$};
	\node (2) at (1.5,-1) {$(m+q)$.};
	\draw[1cincl] (1) to node[auto,swap,arlabel] {$\imath \cp{0} \imath'$} (0);
	\draw[1cinc] (1) to node[auto,arlabel] {$j \cp{0} j'$} (2);
	\draw[1c] (0) to node[auto,swap,arlabel] {$\varphi \cp{0} \psi$} (2);
\end{tikzpicture}
\end{equation*}
The 1\nbd cells in $\theory{Bimod}$ are freely generated under 0\nbd composition by the pair
\begin{equation*}
	\imath\colon (0) \incl (1), \quad \quad \idd{1}\colon (1) \to (1),
\end{equation*}
where $\imath$ is the unique inclusion of the empty ordinal, so $\theory{Bimod}$ admits the structure of a two-sorted pro. 

There are morphisms of pros $\theory{Mon} \to \theory{Bimod}$ and $\mathbb{N} \to \theory{Bimod}$, sending the generating 1\nbd cell to $\imath$ and $\idd{1}$, respectively. The models of $\theory{Bimod}$ in a monoidal category $\cat{M}$ are given by an object of $\cat{M}$, a monoid in $\cat{M}$, and a two-sided action of the monoid on the object, making the object a \emph{bimodule}.
\end{exm}

\begin{dfn}
There is an obvious functor $\fun{U}\colon \bipro \to \ncat{2}$ forgetting the structure of 1\nbd polygraph. This functor has a right adjoint $\fun{R}\colon \ncat{2} \to \bipro$, described as follows.

Given a 2\nbd category $X$, the 1\nbd skeleton of $\fun{R}X$ is free on the underlying \emph{reflexive} graph of $\skel{1}{X}$: that is, the sorts of $\fun{R}X$ are all the 1\nbd cells of rank 1 in $X$. The 2\nbd cells of type $(a_1, \ldots, a_n) \celto (b_1, \ldots, b_m)$ in $\fun{R}X$ are the 2\nbd cells of type $a_1 \cp{0} \ldots \cp{0} a_n \celto b_1 \cp{0} \ldots \cp{0} b_m$ in $X$. 

Compositions are induced by those of $X$ in the obvious way, and a functor $f\colon X \to Y$ of 2\nbd categories induces a morphism $\fun{R}f\colon \fun{R}X \to \fun{R}Y$ of bicoloured pros by 
\begin{align*}
	\varphi\colon & (a_1, \ldots, a_n) \celto (b_1, \ldots, b_m) \\
	\mapsto f(\varphi)\colon & f(a_1)\cp{0}\ldots\cp{0}f(a_n) \celto f(b_1)\cp{0}\ldots\cp{0}f(b_m).
\end{align*}

For each 2\nbd cell $\varphi$ of type $(a_1, \ldots, a_n) \celto (b_1, \ldots, b_m)$ in a bicoloured pro $(T,\gen{T})$, there is a 2\nbd cell $a_1 \cp{0} \ldots \cp{0} a_n \celto b_1 \cp{0} \ldots \cp{0} b_m$ in $\fun{U}(T,\gen{T})$, which in turn induces a 2\nbd cell of type $(a_1, \ldots, a_n) \celto (b_1, \ldots, b_m)$ in $\fun{RU}(T,\gen{T})$; the unit of the adjunction sends $\varphi$ to this 2\nbd cell. 

Conversely, for each 2\nbd category $X$ and 2\nbd cell $\varphi\colon (a_1, \ldots, a_n) \celto (b_1, \ldots, b_m)$ in $\fun{UR}X$, the counit sends $\varphi$ to the cell of type $a_1 \cp{0} \ldots \cp{0} a_n \celto b_1 \cp{0} \ldots \cp{0} b_m$ in $X$ from which it was induced. It is an exercise to show that the unit and counit satisfy the required equations and determine an adjunction. 
\end{dfn}

\begin{lem} 
\label{lem:bipro_equaliser}
The category $\bipro$ has equalisers.
\end{lem}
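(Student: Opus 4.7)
The plan is to construct the equaliser of a pair $f, g\colon (T, \gen{T}) \rightrightarrows (S, \gen{S})$ explicitly as a ``sub-bipro'' of $(T, \gen{T})$. I would set
\[
\gen{E}_0 \eqdef \{x \in \gen{T}_0 \mid f(x) = g(x)\}, \quad \gen{E}_1 \eqdef \{a \in \gen{T}_1 \mid f(a) = g(a)\},
\]
equipped with source and target inherited from $T$; this restriction is well-defined because $f(a) = g(a)$ forces $f(\bord{}{\alpha} a) = g(\bord{}{\alpha} a)$ by functoriality, placing the endpoints of $a$ in $\gen{E}_0$. I then take $\skel{1}{E}$ to be the free category on the graph $(\gen{E}_0, \gen{E}_1)$; since $\skel{1}{T}$ is itself free on the larger graph $(\gen{T}_0, \gen{T}_1)$, the induced comparison functor $\skel{1}{E} \to \skel{1}{T}$ is injective on both 0- and 1-cells. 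Finally, the 2-cells of $E$ are declared to be those 2-cells $\varphi$ of $T$ whose input and output 1-cells lie in the image of $\skel{1}{E} \incl \skel{1}{T}$ and which satisfy $f(\varphi) = g(\varphi)$, with $\cp{0}, \cp{1}$ and units inherited from $T$.

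Next I would verify that this data defines a 2-category, which amounts to checking closure of the chosen 2-cell set under $\cp{0}$ and $\cp{1}$ and that every unit $\eps{}a$ with $a \in \skel{1}{E}$ is included. The first is immediate from $f(\varphi \cp{k} \psi) = f(\varphi) \cp{k} f(\psi) = g(\varphi) \cp{k} g(\psi) = g(\varphi \cp{k} \psi)$ together with the fact that the 1-boundaries of a composite lie in $\skel{1}{E}$ whenever the 1-boundaries of the factors do; the second is trivial since $f(\eps{}a) = \eps{}(f(a)) = \eps{}(g(a)) = g(\eps{}a)$. Together with the already-constructed polygraph structure this promotes $(E, \gen{E})$ to a bicoloured pro, and the inclusion $\iota \colon (E, \gen{E}) \to (T, \gen{T})$ is a morphism in $\bipro$ with $f \circ \iota = g \circ \iota$ by construction. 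Injectivity of $\iota$ at every rank is automatic.

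For the universal property, given a test morphism $h\colon (X, \gen{X}) \to (T, \gen{T})$ with $f \circ h = g \circ h$, on 0-cells the equation $f(h(x)) = g(h(x))$ forces $h(x) \in \gen{E}_0$; on generating 1-cells the constraint $h(a) \in \eps{}(\gen{T}_0) \cup \gen{T}_1$ combined with $f(h(a)) = g(h(a))$ forces $h(a) \in \eps{}(\gen{E}_0) \cup \gen{E}_1$; and on 2-cells, the 1-boundaries of $h(\varphi)$ are composites of images of generators, hence lie in the image of $\skel{1}{E}$. Injectivity of $\iota$ then yields a unique factorisation $\tilde h \colon X \to E$ through $\iota$, which is manifestly a morphism of bipros.

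The main subtlety to anticipate is the discrepancy between the equaliser in $\bipro$ and the naive equaliser in $\ncat{2}$: a composite $a_1 \cp{0} \ldots \cp{0} a_n$ of generators of $T$ may easily be fixed by $f$ and $g$ even when no individual $a_i$ is fixed, so the 1-skeleton of $E$ is in general a \emph{proper} subcategory of the equaliser computed at the level of 2-categories. Restricting the 1-skeleton to composites of \emph{generator-level} fixed points is precisely what keeps the polygraph structure free, and ensuring that the chosen 2-cells remain closed under composition with respect to this restricted 1-skeleton — rather than with respect to the fuller $\ncat{2}$ equaliser — is the step that needs the most care.
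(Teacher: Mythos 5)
Your construction is correct, and it is close in spirit to the paper's argument, but it differs at exactly the point you flag, and the difference is genuine. The paper takes the candidate equaliser to be the restriction $T'$ of $T$ to \emph{all} cells fixed by $f$ and $g$ (the equaliser in $\ncat{2}$) and then asserts that a 1\nbd cell $(a_1,\ldots,a_n)$ is fixed if and only if each $a_i$ is, so that $\skel{1}{T'}$ is free on the fixed generators; you instead generate the 1\nbd skeleton freely from the fixed generators and cut the 2\nbd cells down accordingly. Under the paper's convention that a morphism may collapse a sort to a unit, the paper's biconditional can fail and the two objects genuinely differ: take $T$ free on $a_1\colon p \celto q$, $a_2\colon q \celto r$ and $S$ free on $b\colon u \celto v$, with $f\colon a_1 \mapsto \eps{u},\ a_2 \mapsto b$ and $g\colon a_1 \mapsto b,\ a_2 \mapsto \eps{v}$; then $a_1 \cp{0} a_2$ is fixed although neither factor is, so the $\ncat{2}$\nbd equaliser contains a non-unit 1\nbd cell that is not a composite of fixed generators, whereas your $E$ contains only units. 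Your universal-property argument is what certifies that yours is the right object: a cone $h$ sends each generator of its domain to a generator or a unit of $T$, and on those the condition $fh = gh$ is tested generator-wise, so no cone can reach a cell like $a_1 \cp{0} a_2$. In short, your proof is sound and, if anything, more careful than the printed one; the paper's shortcut, when it applies (for instance when neither $f$ nor $g$ collapses a generator, the more common convention), buys the additional fact that the equaliser is literally the $\ncat{2}$\nbd equaliser and hence is created by $\fun{U}\colon \bipro \to \ncat{2}$, which is how it is subsequently exploited in the comonadicity argument of Proposition \ref{prop:bipro_monadic}; with your construction that creation statement would have to be restricted to the pairs for which the two constructions agree.
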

\begin{proof}
Let $f, g\colon (T,\gen{T}) \to (S,\gen{S})$ be parallel morphisms of bicoloured pros. Define $T'$ to be the restriction of $T$ to the cells $x$ that satisfy $f(x) = g(x)$ in $S$. Then $T'$ is a 2\nbd category. 

A 1\nbd cell $(a_1, \ldots, a_n)$ in $T$ belongs to $T'$ if and only if $f(a_i) = g(a_i)$ for all $i \in \{1,\ldots,n\}$. It follows that $\gen{T'} \eqdef \{a \in \gen{T} \mid f(a) = g(a)\}$ gives $\skel{1}{T'}$ the structure of a 1\nbd polygraph, so that the inclusion of $T'$ into $T$ is a morphism of bicoloured pros. By a routine argument, it is the equaliser of $f$ and $g$.
\end{proof}

\begin{prop}
\label{prop:bipro_monadic}
The functor $\fun{U}\colon \bipro \to \ncat{2}$ is comonadic.
\end{prop}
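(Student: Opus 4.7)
The plan is to apply Beck's comonadicity theorem to the adjunction $\fun{U}\dashv \fun{R}$ just constructed: since $\fun{U}$ already has a right adjoint, it suffices to check that $\fun{U}$ reflects isomorphisms and that $\bipro$ has, and $\fun{U}$ preserves, equalisers of $\fun{U}$\nbd split parallel pairs. The second condition is essentially given by Lemma \ref{lem:bipro_equaliser}: the equaliser of $f,g$ in $\bipro$ is constructed by restricting $T$ to the sub-2-category $T'$ of cells on which $f$ and $g$ agree and taking $\gen{T'} \eqdef \{a \in \gen{T} \mid f(a) = g(a)\}$. Since equalisers in $\ncat{2}$ are formed by exactly the same restriction to cells on which two 2\nbd functors agree, the functor $\fun{U}$ sends the equaliser in $\bipro$ to the equaliser in $\ncat{2}$. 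This preserves all equalisers, so a fortiori $\fun{U}$\nbd split ones, and no additional work is needed.

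The heart of the proof is therefore reflection of isomorphisms. Suppose $f\colon (T,\gen{T}) \to (S,\gen{S})$ is a morphism of bicoloured pros such that $\fun{U}f$ is an isomorphism of 2\nbd categories, with inverse 2\nbd functor $g\colon S \to T$. I would show that $g$ sends generators to generators, so that it is a morphism in $\bipro$. Let $b \in \gen{S}_1$ and let $c \eqdef g(b)$. Since $\skel{1}{T}$ is freely generated by $\gen{T}$, there is a unique decomposition $c = a_1 \cp{0} \ldots \cp{0} a_n$ with $a_i \in \gen{T}_1$, and then
\begin{equation*}
	b \;=\; f(c) \;=\; f(a_1) \cp{0} \ldots \cp{0} f(a_n),
\end{equation*}
where each $f(a_i) \in \eps{}(\gen{S}_0) \cup \gen{S}_1$ by assumption. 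No $f(a_i)$ can be a unit: such a $f(a_i)$ would give $f(a_i) = f(\eps{}(\bord{}{-}a_i))$ between two distinct 1\nbd cells of $T$ (a generator has rank $1$, a unit has rank $0$), contradicting injectivity of $f$ on 1\nbd cells. Hence each $f(a_i) \in \gen{S}_1$, and by freeness of $\skel{1}{S}$ on $\gen{S}$, a word of generators equal to the single generator $b$ must be of length $1$. So $n = 1$ and $g(b) = a_1 \in \gen{T}_1$. An analogous argument at the level of 0\nbd cells shows that $g$ respects the polygraph structure there, and reflection of isos follows.

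Putting the two pieces together, Beck's comonadicity theorem gives that $\fun{U}\colon \bipro \to \ncat{2}$ is comonadic. The main obstacle I anticipate is purely bookkeeping: writing out carefully why no generator of $T$ can be sent by $f$ to a unit in $S$ when $\fun{U}f$ is an isomorphism — but this is forced by the fact that rank distinguishes generators from units and that $\fun{U}f$ is bijective on 1\nbd cells, so once phrased correctly there is nothing deep to verify.
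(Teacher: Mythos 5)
Your proposal is correct and follows essentially the same route as the paper: apply the dual of Beck's theorem using the right adjoint $\fun{R}$, observe that the equalisers of Lemma \ref{lem:bipro_equaliser} are created (hence preserved) by $\fun{U}$, and prove reflection of isomorphisms by showing the inverse 2\nbd functor sends generators to generators via freeness of the 1\nbd skeleton. The only (cosmetic) difference is how the unit case is excluded: you use injectivity of $\fun{U}f$ on 1\nbd cells, while the paper invokes the fact that an isomorphism of 2\nbd categories preserves rank; both arguments are fine.
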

\begin{proof}
We have shown that $\fun{U}$ has a right adjoint. Moreover, equalisers as constructed in the proof of Lemma \ref{lem:bipro_equaliser} are evidently created by $\fun{U}$. 

In order to apply the dual of Beck's monadicity theorem \cite[\S VI.7]{maclane1971cats}, it suffices to show that $\fun{U}$ reflects isomorphisms. Let $f\colon (T,\gen{T}) \to (S,\gen{S})$ be a morphism of bicoloured pros and suppose that $\fun{U}f$ is an isomorphism of 2\nbd categories with inverse $g$. Then both $\fun{U}f$ and $g$ must preserve the rank of all cells. 

Let $a \in \gen{S}_1$. Then $g(a)$ can be written uniquely as a finite path $(a_1, \ldots, a_n)$ with $a_i \in \gen{T}_1$ for all $i \in \{1,\ldots,n\}$, and
\begin{equation*}
	a = \fun{U}f(g(a)) = (f(a_1),\ldots,f(a_n))
\end{equation*}
where $n > 0$ and $f(a_i) \in \gen{S}_1$ for all $i \in \{1,\ldots,n\}$. Because $\skel{1}{S}$ is free, this is only possible if $n = 1$ and $f(a_1) = a$. It follows that $g$ sends generators to generators, hence it determines a morphism of bicoloured pros, inverse to $f$ in $\bipro$.
\end{proof}

\begin{cor} \label{cor:bipro_colimits}
The categories $\bipro$ and $\pro$ have all small limits and colimits.
\end{cor}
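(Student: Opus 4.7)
The plan is to first establish that $\bipro$ has all small limits and colimits, then derive the same for $\pro$ by exhibiting it as a reflective subcategory of $\bipro$. For colimits in $\bipro$, I would invoke Proposition \ref{prop:bipro_monadic}: since $\fun{U}\colon \bipro \to \ncat{2}$ is comonadic, it creates colimits, and $\ncat{2}$ is cocomplete as already noted.

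For limits in $\bipro$, Lemma \ref{lem:bipro_equaliser} provides equalizers, so it suffices to construct arbitrary small products. Products of \emph{cofree} bicoloured pros exist by adjointness: since $\fun{R}\colon \ncat{2} \to \bipro$ is a right adjoint, one has $\prod_i \fun{R}(A_i) \cong \fun{R}(\prod_i A_i)$ in $\bipro$. For a general family $\{(T_i, \gen{T_i})\}_{i \in I}$, I would exploit the Beck resolution coming from comonadicity: each $(T_i, \gen{T_i})$ appears as the equalizer in $\bipro$ of a canonical parallel pair $\fun{R}\fun{U}(T_i, \gen{T_i}) \rightrightarrows \fun{R}\fun{U}\fun{R}\fun{U}(T_i, \gen{T_i})$. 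The product $\prod_i (T_i, \gen{T_i})$ can then be built as the equalizer of the induced parallel pair between the products of the two sides, which exist as products of cofree objects. Together with equalizers, this yields all small limits. The main technical check is precisely this step: verifying that the resulting equalizer genuinely provides the product of the original family, rather than merely of its cofree resolutions.

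To pass from $\bipro$ to $\pro$, I would show that $\pro$ is a reflective subcategory of $\bipro$. The left adjoint to the inclusion sends $(T, \gen{T})$ to the pro obtained by identifying all 0-cells of $T$ into a single point; concretely, this can be constructed as the pushout in $\bipro$ of the span $(T, \gen{T}) \leftarrow D \to \mathbb{1}$, where $D$ is the discrete bicoloured pro on the set $\gen{T}_0$ and $\mathbb{1}$ is the terminal pro. The universal property is immediate from the fact that any morphism into a pro must collapse all 0-cells of its domain. As a reflective subcategory of a complete and cocomplete category, $\pro$ inherits all small limits (computed in $\bipro$; they lie in $\pro$ because the fully faithful inclusion is a right adjoint and thus preserves them) and all small colimits (by applying the reflection to the corresponding colimit in $\bipro$).
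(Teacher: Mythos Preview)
Your proof is correct and follows the same structure as the paper's: colimits via comonadicity, limits via equalisers plus products, and $\pro$ via reflectivity. The only difference is that where you unfold the construction of products from equalisers and cofree objects, the paper simply cites the dual of Linton's result \cite[Corollary 2]{linton1969coequalizers}, and where you build the reflector for $\pro$ explicitly as a pushout, the paper just observes that $\pro$ is cut out by an equational condition.
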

\begin{proof}
By the dual of \cite[Exercise 2, \S VI.2]{maclane1971cats}, a comonadic functor creates all colimits in its codomain; since $\ncat{2}$ has all small colimits, so does $\bipro$. Moreover, $\ncat{2}$ has all small limits and $\bipro$ has equalisers, so $\bipro$ has all small limits by the dual of \cite[Corollary 2]{linton1969coequalizers}.  

Since $\pro$ is defined equationally, it is a reflective subcategory of $\bipro$. It follows from \cite[Proposition 4.5.15]{riehl2017category} that it also has all small limits and colimits.
\end{proof}


\subsection{Non-planar monoidal theories}

\begin{dfn}[Braided strict monoidal category]
A \emph{braided strict monoidal category} is a strict monoidal category $X$ together with a family of 2\nbd cells
\begin{equation*}
	\sigma_{x,y}\colon x \cp{0} y \celto y \cp{0} x
\end{equation*}
called \emph{braidings}, indexed by 1\nbd cells $x, y$, satisfying the following axioms:
\begin{enumerate}
	\item the braidings are invertible, that is, there are unique 2\nbd cells $\invrs{\sigma_{x,y}}$, called \emph{inverse braidings}, such that $\sigma_{x,y} \cp{1} \invrs{\sigma_{x,y}}$ and $\invrs{\sigma_{x,y}} \cp{1} \sigma_{x,y}$ are units;
	\item they are natural in their parameters, that is, for all 2\nbd cells $\varphi\colon x \celto x'$ and $\psi\colon y \celto y'$, 
	\begin{align*}
		(\varphi \cp{0} y) \cp{1} \sigma_{x',y} & = \sigma_{x,y} \cp{1} (y \cp{0} \varphi), \\
		(x \cp{0} \psi) \cp{1} \sigma_{x,y'} & = \sigma_{x,y} \cp{1} (\psi \cp{0} x);
	\end{align*}
	\item \label{ax:braided3} they are compatible with 0\nbd composition and units, that is, 
	\begin{align*}
		\sigma_{x \cp{0} x', y} & = (x \cp{0} \sigma_{x',y}) \cp{1} (\sigma_{x,y} \cp{0} x'), \\
		\sigma_{x, y \cp{0} y'} & = (\sigma_{x,y} \cp{0} y') \cp{1} (y \cp{0} \sigma_{x,y'}), \\
		\sigma_{\eps{}\bullet,y} & = \eps{}y, \quad \quad \sigma_{x,\eps{}\bullet} = \eps{}x,
	\end{align*}
	whenever the left-hand side is defined.
\end{enumerate}
A \emph{functor} $f\colon X \to Y$ of braided strict monoidal categories is a functor of the underlying 2\nbd categories that preserves braidings, that is, $f(\sigma_{x,y}) = \sigma_{f(x),f(y)}$ for all 1\nbd cells $x, y$ in $X$. With their functors, braided strict monoidal categories form a category $\brmoncat$.
\end{dfn}

\begin{dfn}[Prob]
A \emph{prob} is a pro together with a structure of braided strict monoidal category on its underlying strict monoidal category. 

A \emph{morphism} of probs is a morphism of pros that preserves the braidings. Probs and their morphisms form a category $\prob$. 
\end{dfn}

\begin{comm}
Models of probs live in \emph{braided} monoidal categories $\cat{M}$, not necessarily strict. A model of a prob $(T, \gen{T})$ in $\cat{M}$ is a \emph{braided} strong monoidal functor from $T$ to $\cat{M}$. 
\end{comm}

\begin{rmk} \label{rmk:braiding_only_generator}
To determine a unique structure of braided strict monoidal category on a pro it is, in fact, sufficient to give braidings $\sigma_{a,b}$ for all pairs of generating 1\nbd cells $a,b$; a morphism of pros that preserves these braidings automatically preserves all braidings. This is a consequence of axiom \ref{ax:braided3} of braided strict monoidal categories, since every 1\nbd cell in a pro can be decomposed as a composite of generating 1\nbd cells.
\end{rmk}

\begin{dfn}[Dual braided structure] 
\label{dfn:dualbraided}
Let $X$ be a braided strict monoidal category with braidings $\{\sigma_{x,y}\}$. The family of 2\nbd cells
\begin{equation*}
	\sigma^*_{x,y} \eqdef \invrs{\sigma_{y,x}}
\end{equation*}
defines a second structure $X^*$ of braided strict monoidal category on the underlying strict monoidal category of $X$. 

If $f\colon X \to Y$ is a functor of braided strict monoidal categories, the same underlying functor of 2\nbd categories determines a functor $f^*\colon X^* \to Y^*$. This defines an involution $-^*$ on $\brmoncat$, which also induces a duality on $\prob$.
\end{dfn}

\begin{dfn}[Symmetric strict monoidal category] 
A \emph{symmetric strict monoidal category} is a braided strict monoidal category $X$ satisfying $X = X^*$.
\end{dfn}

\begin{dfn}[Prop]
A \emph{prop} is a prob whose underlying braided strict monoidal category is symmetric. We let $\propp$ denote the full subcategory of $\prob$ on props. 
\end{dfn}

\begin{dfn} \label{dfn:free_prob_prop}
There is an obvious forgetful functor $\fun{U}\colon \prob \to \pro$ and an inclusion of subcategories $\propp \incl \prob$. Both of these have left adjoints:
\begin{itemize}
	\item the left adjoint $\fun{F}\colon \pro \to \prob$ of $\fun{U}$ freely adds braidings $\sigma_{x,y}$ and inverse braidings $\invrs{\sigma_{x,y}}$ for all pairs of 1\nbd cells $x, y$ of a pro (or just the generating ones, see Remark \ref{rmk:braiding_only_generator}), then quotients by the axioms of braided strict monoidal categories;
	\item the reflector $\fun{r}\colon \prob \to \propp$ quotients by the equation $\sigma_{x,y} = \sigma^*_{x,y}$ for all pairs of 1\nbd cells $x, y$ of a prob.
\end{itemize}
Since we have not yet shown that $\prob$ has coequalisers, for the moment we can interpret the latter as a coequaliser in $\pro$, then observe that the images of the $\sigma_{x,y}$ still form a family of braidings in the quotient.
\end{dfn}

\begin{exm}
The free prob $\mathbb{B} \eqdef \fun{F}\mathbb{N}$ is the \emph{theory of braids}. With 1\nbd composition, 2\nbd cells of type $(n) \celto (n)$ in $\mathbb{B}$ form the braid group $B_n$ on $n$ strands.
\end{exm}

\begin{exm}
The prop reflection $\mathbb{S} \eqdef \fun{r}\mathbb{B}$ of the theory of braids is the \emph{theory of permutations}. With 1\nbd composition, 2\nbd cells of type $(n) \celto (n)$ in $\mathbb{S}$ form the symmetric group $S_n$ on $n$ elements.
\end{exm}

\begin{exm}
Let $\theory{CMon}$ be defined as $\theory{Mon}$, but 2\nbd cells of type $(n) \celto (m)$ are \emph{all} functions from $(n)$ to $(m)$, not just the order-preserving ones. This is a one-sorted prop with braidings generated by
\begin{equation*}
	\sigma_{1,1}\colon (2) \celto (2), \quad \quad 0 \mapsto 1, \; 1 \mapsto 0.
\end{equation*}
It corresponds to the theory of \emph{commutative monoids} in symmetric monoidal categories. Similarly, models of $\coo{\theory{CMon}}$ are \emph{commutative comonoids}.
\end{exm}

\begin{exm}
There is a one-sorted pro $\theory{Mat}_\mathbb{Z}$ whose 2\nbd cells $A\colon (n) \celto (m)$ are $(m \times n)$\nbd matrices of integers, the 1\nbd composite $A \cp{1} B$ is the product $BA$ of matrices, and  
\begin{equation*}
	A \cp{0} B \eqdef \begin{pmatrix} A & 0 \\ 0 & B \end{pmatrix}.
\end{equation*}
This is a prop with braidings generated by 
\begin{equation*}
	\sigma_{1,1} \eqdef \begin{pmatrix} 0 & 1 \\ 1 & 0 \end{pmatrix}.
\end{equation*}
There is a morphism of props $\theory{CMon} \to \theory{Mat}_\mathbb{Z}$ sending $\varphi\colon (n) \celto (m)$ to the $(m \times n)$\nbd matrix $A_\varphi$ with entries
\begin{equation*}
	A_\varphi(j,i) \eqdef \begin{cases} 1 & \text{if $j = \varphi(i)$}, \\ 0 & \text{otherwise}, \end{cases}
\end{equation*}
and a morphism $\coo{\theory{CMon}} \to \theory{Mat}_\mathbb{Z}$ sending $\coo{\varphi}\colon (m) \celto (n)$ to the transpose of $A_\varphi$. As a symmetric monoidal theory, $\theory{Mat}_\mathbb{Z}$ corresponds to the theory of \emph{commutative and cocommutative Hopf algebras} \cite[Section 7]{bonchi2017interacting}. 

\end{exm}

\begin{dfn}[Gray-category]
A \emph{Gray-category} is a 3\nbd precategory $G$ together with a family of 3\nbd cells
\begin{equation*}
	\chi_{x,y}\colon (x \cp{0} \bord{}{-} y) \cp{1} (\bord{}{+}x \cp{0} y) \celto (\bord{}{-}x \cp{0} y) \cp{1} (x \cp{0} \bord{}{+} y)
\end{equation*}
called \emph{interchangers}, indexed by 2\nbd cells $x, y$ with $\bord{0}{+}x = \bord{0}{-}y$, satisfying the following axioms:
\begin{enumerate}
	\item the interchangers are invertible, that is, there are unique 3\nbd cells 
\begin{equation*}
	\invrs{\chi_{x,y}}\colon (\bord{}{-}x \cp{0} y) \cp{1} (x \cp{0} \bord{}{+} y) \celto (x \cp{0} \bord{}{-} y) \cp{1} (\bord{}{+}x \cp{0} y)
\end{equation*}
	called \emph{inverse interchangers}, such that $\chi_{x,y} \cp{2} \invrs{\chi_{x,y}}$ and $\invrs{\chi_{x,y}} \cp{2} \chi_{x,y}$ are units;
	\item the interchangers are natural in their parameters, that is, for all 3\nbd cells $\varphi\colon x \celto x'$ and $\psi\colon y \celto y'$ with $\bord{0}{+}\varphi = \bord{0}{-}\psi$,
	\begin{align*}
		((\varphi \cp{0} \bord{}{-} y) \cp{1} (\bord{1}{+}\varphi \cp{0} y)) \cp{2} \chi_{x',y} & =\chi_{x,y} \cp{2} ((\bord{1}{-}\varphi \cp{0} y) \cp{1} (\varphi \cp{0} \bord{}{+} y)), \\
		((x \cp{0} \bord{1}{-} \psi) \cp{1} (\bord{}{+}x \cp{0} \psi)) \cp{2} \chi_{x,y'} & = \chi_{x,y} \cp{2} ((\bord{}{-}x \cp{0} \psi) \cp{1} (x \cp{0} \bord{1}{+} \psi));
	\end{align*}
	\item the interchangers are compatible with 1\nbd compositions and units, that is, 
	\begin{align*}
		\chi_{x\cp{1}x',y} & = ((x \cp{0} \bord{}{-}y) \cp{1} \chi_{x',y}) \cp{2} (\chi_{x,y} \cp{1} (x' \cp{0} \bord{}{+}y)), \\
		\chi_{x,y\cp{1}y'} & = (\chi_{x,y} \cp{1} (\bord{}{+}x \cp{0} y')) \cp{2} ((\bord{}{-}x \cp{0} y) \cp{1} \chi_{x,y'})), \\
		\chi_{\eps{}x, y} & = \eps{}(\eps{}x \cp{0} y), \quad \quad \chi_{x, \eps{}y} = \eps{}(x \cp{0} \eps{}y),
	\end{align*}
	whenever the left-hand side is defined;
	\item \label{ax:3interchange} for all pairs of 3\nbd cells $\varphi, \psi$ with $\bord{1}{+}\varphi = \bord{1}{-}\psi$, the equation
	\begin{equation*} 
		(\varphi \cp{1} \bord{2}{-}\psi) \cp{2} (\bord{2}{+}\varphi \cp{1} \psi) = (\bord{2}{-}\varphi \cp{1} \psi) \cp{2} (\varphi \cp{1} \bord{2}{+} \psi)
	\end{equation*}
	holds in $G$.
\end{enumerate}
A \emph{functor} $f\colon G \to H$ of Gray\nbd categories is a functor of the underlying 3\nbd precategories that preserves the interchangers, that is, $f(\chi_{x,y}) = \chi_{f(x),f(y)}$ for all suitable 2\nbd cells $x, y$ in $G$. With their functors, Gray\nbd categories form a category $\graycat$.
\end{dfn}

\begin{rmk}
Axiom \ref{ax:3interchange} is an instance of (\ref{eq:preinterchange}), allowing us to univocally define the 1\nbd composition $\varphi \cp{1} \psi$ of 3\nbd cells with $\bord{1}{+}\varphi = \bord{1}{-}\psi$ in a Gray\nbd category. 
\end{rmk}

\begin{comm}
A more concise definition is that a Gray-category is a small category enriched over $\ncat{2}$ with the ``pseudo'' Gray tensor product \cite[Chapter 5]{gordon1995coherence}. As in \cite[\S 1.4]{lack2011quillen}, one derives that $\graycat$ is locally finitely presentable, and in particular has all small limits and colimits.
\end{comm}

\begin{dfn}
By Proposition \ref{prop:omega_preomega}, every 3\nbd category seen as a 3\nbd precategory admits a natural structure of Gray\nbd category with units as interchangers. This defines an embedding $\ncat{3} \incl \graycat$, which makes $\ncat{3}$ a reflective subcategory of $\graycat$: the reflector universally turns the interchangers into units.
\end{dfn}

\begin{dfn}
Given a braided strict monoidal category $X$, we define a Gray-category $\fun{B}X$ as follows. For all $n \in \mathbb{N}$, we let $\fun{B}X_{n+1} \eqdef X_n$, with the same boundary and unit operators as $X$ between $\fun{B}X_{n+2}$ and $\fun{B}X_{n+1}$. We let $\fun{B}X_0 \eqdef \{\bullet\}$, with the only possible unit and boundary operators relating it to $\fun{B}X_1$. This defines the underlying reflexive $\omega$\nbd graph of $\fun{B}X$. 

To make $\fun{B}X$ a 3\nbd precategory, it suffices to define the principal compositions. Because $\fun{B}X$ has no rank-1 cells, the principal compositions are of the form $x \cp{k} y$ where
\begin{itemize}
	\item $k = 1$ and $\min \{\rank{x},\rank{y}\} = 2$, or 
	\item $k = 2$ and $\rank{x} = \rank{y} = 3$.
\end{itemize}
In either case, $x \cp{k-1} y$ is defined in $X$, and we let $x \cp{k} y$ be equal to it in $\fun{B}X$.

Finally, given 2\nbd cells $x, y$ in $\fun{B}X$, we let the interchanger $\chi_{x,y}$ correspond to the braiding $\sigma_{x,y}$ in $X$. It is an exercise to check that this gives $\fun{B}X$ the structure of a Gray-category.

This assignment extends to a functor $\fun{B}\colon \brmoncat \to \graycat$ in the obvious way. By \cite[Theorem 2.16]{cheng2011periodic}, this functor is full and faithful, and its essential image consists exactly of those Gray-categories that have a single 0\nbd cell and a single 1\nbd cell. This can be seen as an alternative characterisation of $\brmoncat$ as a full subcategory of $\graycat$.
\end{dfn}

\begin{rmk}
Through $\fun{B}$, the duality $-^*$ on $\brmoncat$ is the restriction of the duality on $\graycat$ that reverses the orientation of 1\nbd cells. 
\end{rmk}

\begin{dfn} \label{dfn:alternative_prob}
If $X$ is a braided strict monoidal category, the structure of a 1\nbd polygraph on $\skel{1}{X}$ determines a unique structure of 2\nbd prepolygraph on $\skel{2}{\fun{B}X}$, and vice versa. A functor $f$ of braided strict monoidal categories sends generators to generators if and only if $\fun{B}f$ does. 

Thus, a prob is equivalently defined as a Gray\nbd category $T$ with a single 0\nbd cell, a single 1\nbd cell, and the structure of a 2\nbd prepolygraph $(\skel{2}{T}, \gen{T})$ on its 2\nbd skeleton. A morphism $f\colon (T,\gen{T}) \to (S,\gen{S})$ of probs is a functor of Gray\nbd categories such that $f(a) \in \{\eps{2}\bullet\} \cup \gen{S}_2$ for all $a \in \gen{T}_2$. 

We conclude that there is a triangle of functors
\begin{equation*} 
\begin{tikzpicture}[baseline={([yshift=-.5ex]current bounding box.center)}]
	\node (0) at (-2,-1.5) {$\brmoncat$};
	\node (1) at (0,0) {$\prob$};
	\node (2) at (2,-1.5) {$\graycat$};
	\draw[1c] (1) to node[auto,swap,arlabel] {$\fun{U}_2$} (0);
	\draw[1c] (1) to node[auto,arlabel] {$\fun{U}_3$} (2);
	\draw[1cinc] (0) to node[auto,swap,arlabel] {$\fun{B}$} (2);
\end{tikzpicture}
\end{equation*}
commuting up to natural isomorphism, where $\fun{U}_2$ and $\fun{U}_3$ are the forgetful functors associated to the two alternative definitions of prob.
\end{dfn}

\begin{prop}
The categories $\brmoncat$, $\prob$, and $\propp$ have all small limits and colimits.
\end{prop}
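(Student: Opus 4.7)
The plan is to tackle the three categories in turn, mirroring the ladder of Lemma~\ref{lem:bipro_equaliser}, Proposition~\ref{prop:bipro_monadic}, and Corollary~\ref{cor:bipro_colimits} one dimension up.

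For $\brmoncat$: a braided strict monoidal category carries the same kind of finite equational structure on a reflexive $\omega$\nbd graph as an $\omega$\nbd category, specialised to dimensions $\leq 2$, with the single 0\nbd cell axiom, together with an added family of braidings satisfying the finite list of equations in the definition. It is therefore a category of algebras for a finitary monad on the presheaf topos $\omegagphref$, and the Remark at the end of \cite[\S 2.78]{adamek1994locally} (invoked for $\omegacat$ in the excerpt) shows that $\brmoncat$ is locally finitely presentable. In particular, it has all small limits and colimits.

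For $\prob$: I would adapt the proofs of Lemma~\ref{lem:bipro_equaliser} and Proposition~\ref{prop:bipro_monadic} almost verbatim to the forgetful functor $\fun{U}_2 \colon \prob \to \brmoncat$. A right adjoint $\fun{R}$ is constructed as the direct analogue of the $\fun{R}$ for bicoloured pros: given $X \in \brmoncat$, the prob $\fun{R} X$ has as sorts the rank-1 cells of $X$; as 2\nbd cells of type $(a_1, \ldots, a_n) \celto (b_1, \ldots, b_m)$ the 2\nbd cells $a_1 \cp{0} \ldots \cp{0} a_n \celto b_1 \cp{0} \ldots \cp{0} b_m$ of $X$; and as braidings $\sigma_{(a_1, \ldots, a_n),(b_1, \ldots, b_m)}$ the braidings of $X$ between the corresponding composites. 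Equalisers are constructed as in Lemma~\ref{lem:bipro_equaliser}, by restricting to the subprob of cells on which a parallel pair of morphisms agrees --- a subset closed under braidings by naturality. Reflection of isomorphisms goes through as in Proposition~\ref{prop:bipro_monadic}: an inverse of the underlying braided strict monoidal functor must send generators to generators, and by Remark~\ref{rmk:braiding_only_generator} automatically preserves the full braided structure. The dual of Beck's monadicity theorem then exhibits $\fun{U}_2$ as comonadic, and the argument of Corollary~\ref{cor:bipro_colimits} transfers to show that $\prob$ has all small limits and colimits.

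Finally, $\propp$ is cut out of $\prob$ by the equation $\sigma_{x,y} = \sigma^*_{x,y}$ (Definition~\ref{dfn:free_prob_prop}), so it is a reflective subcategory of $\prob$ and inherits all small limits and colimits via \cite[Proposition 4.5.15]{riehl2017category}.

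I expect the main subtlety to lie in the middle step, in verifying that the braided structure transports coherently along $\fun{R}$ --- every braiding axiom must survive the passage from composed 1\nbd cells to sorts --- and that the equaliser construction produces a subprob closed under braidings. Both come down to naturality, but it is here that the braided data genuinely goes beyond what was checked for bicoloured pros.
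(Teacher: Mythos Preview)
Your proof is correct, and for $\prob$ and $\propp$ it matches the paper's approach exactly. For $\brmoncat$, however, you take a different route: you argue directly that braided strict monoidal categories are algebras for a finitary monad on $\omegagphref$, while the paper instead observes that the essential image of $\fun{B}\colon \brmoncat \to \graycat$ is cut out equationally, hence $\brmoncat$ is (up to equivalence) a reflective subcategory of the locally finitely presentable $\graycat$, and inherits limits and colimits from there. Your approach is more self-contained but requires a moment's care to see that the ``single 0\nbd cell'' condition is genuinely monadic (it is: add a constant 0\nbd cell and the equation identifying every 0\nbd cell with it); the paper's approach sidesteps this by reusing structure already in place, namely the full and faithful $\fun{B}$ and the known local presentability of $\graycat$.
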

\begin{proof}[Sketch of the proof] 
First of all, the essential image of $\fun{B}$ is defined equationally in $\graycat$, so $\brmoncat$ is, up to equivalence, a reflective subcategory. Since $\graycat$ has all small limits and colimits, by \cite[Proposition 4.5.15]{riehl2017category} so does $\brmoncat$. 

Then, we can mimic the proofs of Lemma \ref{lem:bipro_equaliser} and Proposition \ref{prop:bipro_monadic} to show that $\prob$ has equalisers and that the functor $\fun{U}_2\colon \prob \to \brmoncat$ is comonadic. As in the proof of Corollary \ref{cor:bipro_colimits}, we deduce that $\prob$ has all small limits and colimits, and so does its reflective subcategory $\propp$.
\end{proof}

\begin{rmk} \label{rmk:pseudomonic}
The functors $\fun{U}\colon \bipro \to \ncat{2}$, $\fun{U}_2\colon \prob \to \brmoncat$ are \emph{pseudomonic}: that is, in addition to being faithful, they reflect and are full on isomorphisms. This captures the fact that a 2\nbd category admits at most one structure of bicoloured pro, a consequence of the general statement, proved by Michael Makkai \cite[Section 4, Proposition 8]{makkai2005word}, that an $\omega$\nbd category admits at most one structure of polygraph.

Because the composite of a pseudomonic with a full and faithful functor is pseudomonic, it follows that $\fun{U}_3\colon \prob \to \graycat$ is also pseudomonic.
\end{rmk}

\section{Combinatorial results} \label{sec:combinatorics}

\subsection{In generic dimension}

In this section, we use results of \cite[Section 6]{hadzihasanovic2018combinatorial}. These results are stated relative to the restricted class of constructible directed complexes, but the proofs do not involve any properties that are not satisfied by all regular directed complexes. Thus all cited statements hold with \emph{constructible} replaced by \emph{regular}.

\begin{dfn}
Let $U$ be a closed subset of a regular directed complex. For each $n \geq -1$, the bipartite directed graph $\maxd{n}{U}$ has
\begin{equation*}
	\{x \in U \,|\, \dmn{x} \leq n\} + \{x \in U \,|\, x \text{ is maximal and } \dmn{x} > n\}
\end{equation*}
as set of vertices, and an edge $y \to x$ if and only if 
\begin{itemize}
	\item $\dmn{y} \leq n$, $\dmn{x} > n$, and $y \in \bord{n}{-}x \setminus \bord{n-1}{}x$, or
	\item $\dmn{y} > n$, $\dmn{x} \leq n$, and $x \in \bord{n}{+}y \setminus \bord{n-1}{}y$.
\end{itemize}
\end{dfn}

\begin{dfn}[Frame dimension]
Let $U$ be a closed subset of a regular directed complex. The \emph{frame dimension} of $U$ is the integer 
\begin{equation*}
	\frdmn{U} \eqdef \max \{\dmn{\clos\{x\} \cap \clos\{y\}} \mid \text{$x, y$ maximal in $U$, $x \neq y$}\}.
\end{equation*}
\end{dfn}

\begin{rmk}
If $\frdmn{U} = -1$, then $U$ is a disjoint union of atoms.
\end{rmk}

\begin{dfn}[Frame acyclicity]
A regular directed complex $P$ is \emph{frame-acyclic} if, for all molecules $U$ in $P$, if $\frdmn{U} = k$, then $\maxd{k}{U}$ is acyclic.
\end{dfn}

\begin{lem} \label{lem:split_lemma}
Let $P$ be a frame-acyclic regular directed complex. Then
\begin{enumerate}
	\item for all molecules $U$ in $P$, if $U = U_1 \cup U_2$ for some closed subsets $U_1, U_2$ such that $U_1 \cap U_2 = \bord{k}{+}U_1 = \bord{k}{-}U_2$, then $U_1$ and $U_2$ are molecules;
	\item $(\mol{}{P}^*, \{\clos\{x\}\}_{x \in P})$ is a polygraph. 
\end{enumerate}
\end{lem}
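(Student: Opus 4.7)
I would prove (1) by induction on molecules ordered by cardinality of the underlying set, then derive (2) as a corollary. Fix $U = U_1 \cup U_2$ with $U_1, U_2$ closed and $U_1 \cap U_2 = \bord{k}{+}U_1 = \bord{k}{-}U_2$.

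For the base case, $U$ is an atom with greatest element $\top$. Since $U_1, U_2$ are closed and $U = U_1 \cup U_2$, one of them contains $\top$; say $\top \in U_1$, forcing $U_1 = \clos\{\top\} = U$. Then $U_1 \cap U_2 = U_2 = \bord{k}{+}U_1 = \bord{k}{+}U$, which is a molecule by the axioms of a regular directed complex.

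For the inductive step, $U$ is not an atom, so fix a witnessing decomposition $U = V_1 \cup V_2$ with $V_1, V_2 \submol U$ molecules and $V_1 \cap V_2 = \bord{m}{+}V_1 = \bord{m}{-}V_2$ for some $m$. The inductive hypothesis applies to $V_1$ and $V_2$. The plan is to form the four closed subsets $W_{ij} \eqdef V_i \cap U_j$, use the inductive hypothesis on $V_1$ and $V_2$ to show each $W_{ij}$ is a molecule (via intersecting their boundary splittings with $U_1$ and $U_2$), and then reassemble them to express $U_1 = W_{11} \cup W_{21}$ and $U_2 = W_{12} \cup W_{22}$ as pastings of molecules along compatible boundaries at level $k$. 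The main obstacle will be verifying that the cross-intersections $W_{ij} \cap W_{i'j'}$ actually coincide with the expected boundary pieces $\bord{k}{\pm}W_{ij}$, rather than containing stray cells that straddle both decompositions; this is precisely where frame acyclicity at the frame dimension $\frdmn{U}$ (and in particular at levels between $k$ and $m$) is needed, by ruling out directed cycles among maximal cells that would otherwise allow a cell to ``belong'' to $U_1$ and $U_2$ inconsistently with the orientation.

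For (2), we must verify that for each $n$, the $n$-skeleton $\skel{n}{\mol{}{P}^*}$ is the pushout in $\omegacat$ of $\skel{n-1}{\mol{}{P}^*}$ and $\coprod_{\dmn{x} = n} O^n$ along $\coprod_{\dmn{x} = n} \bord O^n$. Unpacking the universal property, this amounts to showing that every $n$-molecule of $P$ can be written as a composite of atoms $\clos\{x\}$ with $\dmn{x} \leq n$, and that two such composite expressions agree in $\mol{}{P}^*$ only through the axioms of $\omega$-categories. The first claim follows from (1) by iteration: given an $n$-molecule $U$, frame acyclicity of $\maxd{k}{U}$ at $k = \frdmn{U}$ lets me pick a ``last'' maximal atom, split it off via (1), and recurse on the strictly smaller remainder, ultimately expressing $U$ as a nested composite of atoms. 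The second claim is automatic from the construction of $\mol{}{P}^*$ as the free $\omega$-category on the partial $\omega$-category $\mol{}{P}$: all relations in $\mol{}{P}^*$ are by definition generated by those of $\omega$-categories applied to partial compositions coming from the pasting structure of $P$, and (1) ensures no further relations are hidden in the decomposition ambiguity.
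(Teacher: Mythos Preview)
The paper does not prove this lemma directly: its entire proof is the single sentence ``A corollary of [Proposition 26, hadzihasanovic2018combinatorial].'' So there is no in-paper argument to compare against; the relevant question is whether your sketch stands on its own.

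Your inductive plan has a genuine gap at the step you yourself flag. To apply the inductive hypothesis to $V_i$ with the split $V_i = W_{i1} \cup W_{i2}$, you must first verify that $W_{i1} \cap W_{i2} = \bord{k}{+}W_{i1} = \bord{k}{-}W_{i2}$. What you actually know is $W_{i1} \cap W_{i2} = V_i \cap (U_1 \cap U_2) = V_i \cap \bord{k}{+}U_1$, and there is no reason \emph{a priori} that $\bord{k}{+}(V_i \cap U_1)$ coincides with $V_i \cap \bord{k}{+}U_1$: boundary operators do not commute with arbitrary intersections of closed subsets, and the usual arguments that they do already presuppose the subsets involved are molecules, which is what you are trying to prove. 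Saying ``frame acyclicity is needed here'' is correct as a diagnosis but does not close the gap; you would need to explain concretely how acyclicity of $\maxd{\frdmn{U}}{U}$ forces these boundary identities, and that explanation is essentially the whole content of the result.

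The argument in the cited reference takes a different route: rather than recursing on an arbitrary witnessing decomposition of $U$, it uses frame acyclicity to produce a $k$\nbd order on the maximal elements of dimension $> k$, shows that the hypothesis $U_1 \cap U_2 = \bord{k}{+}U_1 = \bord{k}{-}U_2$ forces the maximal elements of $U_1$ and $U_2$ to form an initial and a final segment of that order, and then builds $U_1$ and $U_2$ explicitly as iterated $k$\nbd pastings along that order (this is also the mechanism behind Lemma~\ref{lem:frame_decomposition}). That approach sidesteps the boundary-of-intersection problem entirely. Your sketch for part (2) is fine in outline, but it depends on part (1), so it inherits the gap.
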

\begin{proof}
A corollary of \cite[Proposition 26]{hadzihasanovic2018combinatorial}.
\end{proof}

\begin{dfn}[$k$\nbd Order]
Let $U$ be a regular $n$\nbd molecule. For $k < n$, a \emph{$k$\nbd order} on $U$ is a linear ordering $(x_1, \ldots, x_m)$ of the set 
\begin{equation*}
	\{x \in U \,|\, x \text{ is maximal and } \dmn{x} > k\}
\end{equation*}
with the property that, if there is a path from $x_i$ to $x_j$ in $\maxd{k}{U}$, then $i \leq j$. 
\end{dfn}

\begin{prop} \label{prop:admit_k-order}
Let $U$ be a regular $n$\nbd molecule, $k < n$. If $\maxd{k}{U}$ is acyclic, then $U$ admits a $k$\nbd order.
\end{prop}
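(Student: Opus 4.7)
The plan is to convert the acyclicity of $\maxd{k}{U}$ into a partial order on the set $S \eqdef \{x \in U \mid x \text{ is maximal and } \dmn{x} > k\}$, and then invoke the existence of a linear extension.

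First, I would observe that $\maxd{k}{U}$ is bipartite: its vertex set splits into the ``low'' part $L \eqdef \{x \in U \mid \dmn{x} \leq k\}$ and the ``high'' part $S$, and by the definition of its edges every edge goes between $L$ and $S$. In particular, any directed path in $\maxd{k}{U}$ alternates between $L$ and $S$, so a path from $x \in S$ to $y \in S$ has even length and is a concatenation of length\nbd $2$ ``hops'' through elements of $L$. Define a binary relation $\preceq$ on $S$ by declaring $x \preceq y$ whenever either $x = y$ or there is a directed path from $x$ to $y$ in $\maxd{k}{U}$. Transitivity is immediate by concatenation of paths, and reflexivity holds by definition, so $\preceq$ is a preorder.

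Next, I would show $\preceq$ is antisymmetric, using the hypothesis that $\maxd{k}{U}$ is acyclic. Suppose $x, y \in S$ are distinct with $x \preceq y$ and $y \preceq x$. Then concatenating a path $x \to \ldots \to y$ with a path $y \to \ldots \to x$ yields a closed directed walk in $\maxd{k}{U}$ visiting at least the two distinct vertices $x$ and $y$; by standard graph-theoretic reasoning this walk contains a directed cycle, contradicting acyclicity. Hence $\preceq$ is a partial order on $S$.

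Finally, because $U$ is finite (oriented graded posets are finite by definition), $S$ is a finite poset under $\preceq$, and every finite poset admits a linear extension: choose a minimal element $x_1$, remove it, choose a minimal element of the remaining poset, and iterate. The resulting enumeration $(x_1, \ldots, x_m)$ is a linear order on $S$ such that $x_i \preceq x_j$ implies $i \leq j$. Unfolding the definition of $\preceq$, this says exactly that whenever there is a directed path from $x_i$ to $x_j$ in $\maxd{k}{U}$ (with $i \neq j$), one has $i < j$, so $(x_1, \ldots, x_m)$ is a $k$\nbd order on $U$.

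The argument is essentially a topological sort, and I do not anticipate any serious obstacle; the only subtlety worth being explicit about is the passage from a directed closed walk to a directed cycle, which is why I would spell out the bipartite structure rather than just wave at ``standard facts about DAGs''.
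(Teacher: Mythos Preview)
Your argument is correct and is essentially the same as the paper's: both amount to a topological sort of the finite DAG $\maxd{k}{U}$, with the paper sorting all vertices and then restricting to $S$, while you first pass to the reachability partial order on $S$ and then linearly extend. The only difference is presentational, and your extra details (bipartiteness, antisymmetry via acyclicity) are all sound.
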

\begin{proof}
Every directed acyclic graph admits a topological sorting, that is, a linear order $\preceq$ on its vertices with the property that if there is a path from $x$ to $y$, then $x \preceq y$. The restriction of a topological sorting of $\maxd{k}{U}$ to the maximal elements of dimension greater than $k$ is a $k$\nbd order on $U$.
\end{proof}

\begin{prop} \label{prop:codim1_acyclic}
Let $U$ be a regular $n$\nbd molecule. Then $\maxd{n-1}{U}$ is acyclic and $U$ admits an $(n-1)$\nbd order.
\end{prop}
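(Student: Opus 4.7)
By Proposition \ref{prop:admit_k-order}, the second conclusion follows from the first, so I will focus on showing that $\maxd{n-1}{U}$ is acyclic. My plan is to induct over the $\submol$ relation, with the mild strengthening that for every regular molecule $V$ with $\dmn{V} \leq n$ the graph $\maxd{n-1}{V}$ is acyclic; this strengthening is convenient because a top-dimensional molecule can admit a pasting decomposition along some boundary $\bord{k}{+}V_1$ with $k < n - 1$, in which case the factors may have strictly lower dimension.

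In the base case, $V = \clos{\{\top\}}$ is a regular atom. If $\dmn{\top} < n$, no vertex of $\maxd{n-1}{V}$ has dimension above $n-1$ and there are no edges. If $\dmn{\top} = n$, then $\top$ is the unique such vertex, so any cycle in $\maxd{n-1}{V}$ must pass through $\top$ at least twice, giving a length-two subcycle of the form $y \to \top \to y$ with $y \in (\bord{n-1}{-}\top \cap \bord{n-1}{+}\top) \setminus \bord{n-2}{}\top$. The spherical boundary condition in the definition of a regular directed complex rules this out.

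For the inductive step, write $V = V_1 \cp{k} V_2$ with $V_1, V_2 \submol V$ both properly contained in $V$, and $V_1 \cap V_2 = \bord{k}{+}V_1 = \bord{k}{-}V_2$. Since $V_1 \cap V_2$ has dimension at most $k \leq n-1$, the $n$\nbd dimensional elements of $V$ are partitioned between $V_1$ and $V_2$; moreover, every edge of $\maxd{n-1}{V}$ is incident to some $n$\nbd dimensional $x$ in a unique $V_i$ together with $y \in \bord{n-1}{\pm}x \subseteq V_i$, so lies inside $\maxd{n-1}{V_i}$, which is acyclic by the induction hypothesis. A putative cycle in $\maxd{n-1}{V}$ not contained in either $V_i$ would therefore include at least one ``crossing'' path $x \to y \to x'$ with $x$ and $x'$ belonging to different $V_i$; this forces $y$ to be $(n-1)$\nbd dimensional and to lie in $V_1 \cap V_2$, so $k = n-1$.

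The hard part is to rule out $V_2 \to V_1$ crossings. Given such a crossing, if $x \in V_2$ is $n$\nbd dimensional with $y \in \bord{n-1}{+}x$, then $x$ is an $n$\nbd dimensional cover of $y$ in $V_2$ with output orientation; on the other hand, $y \in V_1 \cap V_2 = \bord{n-1}{-}V_2$ forces every $n$\nbd dimensional cover of $y$ in $V_2$ to have input orientation --- or else $y$ has no $n$\nbd dimensional cover in $V_2$ at all, which again rules out the existence of $x$. Either alternative is a contradiction. Since any cycle in the condensed bipartite graph on $n$\nbd dimensional vertices must include as many $V_1 \to V_2$ as $V_2 \to V_1$ crossings, no such cycle can exist. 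The most delicate step will be keeping the orientation conditions in the definition of $\bord{n-1}{\pm}V_i$ straight, particularly when handling $(n-1)$\nbd dimensional elements that lack an $n$\nbd dimensional cover inside $V_i$.
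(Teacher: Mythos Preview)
Your inductive strategy is natural, but the inductive step has a real gap. You assert that in a crossing $x \to y \to x'$ the intermediate vertex $y$ must be $(n-1)$\nbd dimensional, and you then use this to treat $x$ as a cover of $y$ and invoke the orientation condition defining $\sbord{n-1}{-}V_2$. But the definition of $\maxd{n-1}{V}$ only requires $y \in \bord{n-1}{+}x \setminus \bord{n-2}{}x$, and this set can contain elements of dimension strictly below $n-1$ whenever $\bord{n-1}{+}x$ is not an atom. For a concrete instance, take a $3$\nbd atom $x$ whose output boundary has shape $U_{1,1} \cp{1} U_{1,1}$: the $1$\nbd cell shared by the two copies of $U_{1,1}$ lies in $\bord{2}{+}x \setminus \bord{1}{}x$, so in $\maxd{2}{U}$ there is an edge from $x$ to a $1$\nbd dimensional vertex. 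The same issue undermines your deduction that $k = n-1$.

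Once $\dim y < n-1$, the element $x$ no longer covers $y$, and the fact that $y \in \bord{n-1}{-}V_2$ places no direct constraint on the orientation with which $x$ sits above $y$: the definition of $\sbord{n-1}{-}V_2$ speaks only about covers of $(n-1)$\nbd dimensional elements. What would close the gap is a lemma of the form $\bord{n-1}{+}x \cap \bord{n-1}{-}V_2 \subseteq \bord{n-2}{}x$ for every $n$\nbd atom $x$ inside an $n$\nbd molecule $V_2$; this is true, but it is itself a structural fact about how the boundary of an atom meets the boundary of the ambient molecule, and it needs its own inductive argument. The paper does not carry this out here: it simply cites the acyclicity of $\maxd{n-1}{U}$ from \cite[Proposition~20]{hadzihasanovic2018combinatorial} and then applies Proposition~\ref{prop:admit_k-order}.
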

\begin{proof}
Follows from Proposition \ref{prop:admit_k-order} and \cite[Proposition 20]{hadzihasanovic2018combinatorial}.
\end{proof}

\begin{lem} \label{lem:frame_decomposition}
Let $U$ be a frame-acyclic regular molecule, $k \geq \frdmn{U}$, and let $(x_1, \ldots, x_m)$ be a $k$\nbd order on $U$. There exist molecules $V_1, \ldots, V_m$, such that
\begin{equation*}
	U = V_1 \cp{k} \ldots \cp{k} V_m
\end{equation*}
and $x_i \in V_j$ if and only if $i = j$ for all $i,j \in \{1, \ldots m\}$.
\end{lem}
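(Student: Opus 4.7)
I would proceed by induction on $m$. For the base case $m = 1$, simply take $V_1 \eqdef U$: then $U = V_1$ is a molecule whose unique maximal element of dimension $> k$ is $x_1$. The inductive step ($m > 1$) is to ``split off'' $x_1$ by producing a decomposition $U = V_1 \cp{k} U'$, where $V_1$ is a molecule with $x_1$ as its sole maximal element of dimension $> k$, and $U'$ is a molecule whose maximal elements of dimension $> k$ are exactly $x_2, \ldots, x_m$. Granted such a splitting, $U'$ inherits the $k$-order $(x_2, \ldots, x_m)$, since $\maxd{k}{U'}$ is contained in $\maxd{k}{U}$ as the subgraph obtained by deleting $x_1$ and the edges incident to it, so the restriction of an acyclic linearisation stays acyclic; moreover, $\frdmn{U'} \leq \frdmn{U} \leq k$ and $U'$ remains frame-acyclic (every molecule in $U'$ is a molecule in $U$). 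The induction hypothesis then decomposes $U' = V_2 \cp{k} \ldots \cp{k} V_m$ with $x_j \in V_j$ for $j > 1$, and concatenating yields the desired $U = V_1 \cp{k} V_2 \cp{k} \ldots \cp{k} V_m$.

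The natural candidate for $V_1$ is the closed subset
\[
	V_1 \eqdef \bord{k}{-}U \cup \clos\{x_1\}
\]
of $U$, with complementary piece $U' \eqdef (U \setminus \clos\{x_1\}) \cup \bord{k}{+}V_1$. Verifying that this works amounts to three claims: (i) $V_1 \cup U' = U$; (ii) $V_1 \cap U' = \bord{k}{+}V_1 = \bord{k}{-}U'$; and (iii) both $V_1$ and $U'$ are molecules. For (iii), once (i) and (ii) are in place, Lemma \ref{lem:split_lemma} applied to the frame-acyclic molecule $U$ delivers the conclusion.

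The content of (i) and (ii) is a careful bookkeeping argument controlled by the $k$-order and the bound $\frdmn{U} \leq k$. The decisive fact is that, because $x_1$ is first in the $k$-order, $\sbord{k}{-}x_1 \subseteq \sbord{k}{-}U$: if some $y \in \sbord{k}{-}x_1$ were covered in $U$ by a $(k+1)$-dim cell with $+$\nbd orientation, that cell would sit below a maximal $x_j$ with $y \in \bord{k}{+}x_j \setminus \bord{k-1}{}x_j$, producing a path $x_j \to y \to x_1$ in $\maxd{k}{U}$ and forcing $j < 1$, a contradiction. Hence $\bord{k}{-}x_1 \subseteq \bord{k}{-}U$, which gives $\bord{k}{-}V_1 = \bord{k}{-}U$ and shows that $\clos\{x_1\}$ is glued to $\bord{k}{-}U$ along $\bord{k}{-}x_1$ in the expected way. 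Symmetrically, each $k$-dim output face of $x_1$ is either in $\bord{k}{+}U$ or is consumed as an input by some $x_j$ with $j > 1$, which is what places it on the interface $\bord{k}{+}V_1 = \bord{k}{-}U'$. Finally, any cell of dimension $> k$ in $U \setminus \clos\{x_1\}$ lies under some $x_j$ with $j > 1$, so by $\frdmn{U} \leq k$ it meets $\clos\{x_1\}$ only in dimension $\leq k$ and passes cleanly into $U'$.

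The main obstacle is exactly this set-theoretic bookkeeping at the interface: promoting ``$U$ is the set-theoretic union of $V_1$ and $U'$'' to a $\cp{k}$-pasting requires checking that \emph{every} $k$-dim cell in $V_1 \cap U'$ has the right pattern of $\pm$-coverings on each side (so that $\bord{k}{+}V_1$ and $\bord{k}{-}U'$ genuinely coincide as oriented subposets), and this in turn uses the $k$-order hypothesis uniformly across the $k$-faces of $x_1$ and the boundaries of the other $x_j$. Once this is in place, Lemma \ref{lem:split_lemma} and the inductive hypothesis finish the proof.
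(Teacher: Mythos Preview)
Your strategy of always peeling off $x_1$ first is different from the paper's, and the argument as written has a genuine gap at the key step.

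The gap is in your justification that $\sbord{k}{-}x_1 \subseteq \sbord{k}{-}U$. You write that a $(k{+}1)$\nbd cell $z$ covering $y \in \sbord{k}{-}x_1$ with $+$\nbd orientation ``would sit below a maximal $x_j$ with $y \in \bord{k}{+}x_j \setminus \bord{k-1}{}x_j$''. The first clause is fine, but the second does not follow: $z \leq x_j$ and $y \in \bord{}{+}z$ do not imply $y \in \sbord{k}{+}x_j$. Inside $\clos\{x_j\}$, the $k$\nbd cell $y$ could be covered by $(k{+}1)$\nbd cells of \emph{both} orientations, making $y$ interior to $\clos\{x_j\}$ at level $k$ and hence absent from both $\bord{k}{+}x_j$ and $\bord{k}{-}x_j$. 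In that case there is no edge $x_j \to y$ in $\maxd{k}{U}$, and you have no path $x_j \to \ldots \to x_1$ to contradict the $k$\nbd order. The frame\nbd dimension bound only tells you $y$ is maximal in $\clos\{x_1\} \cap \clos\{x_j\}$, which is not enough by itself. Without this inclusion, your definition of $V_1 = \bord{k}{-}U \cup \clos\{x_1\}$ need not satisfy $\bord{k}{-}V_1 = \bord{k}{-}U$, and the interface bookkeeping (ii) breaks down.

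The paper avoids this obstacle in two ways. For $k = \frdmn{U}$, it invokes the construction from \cite[Proposition 26]{hadzihasanovic2018combinatorial}, which produces a split $U = U_1 \cp{k} U_2$ at \emph{some} index $i \in \{1,\ldots,m{-}1\}$ (not necessarily $i=1$), then recurses on both halves; the freedom to choose $i$ is exactly what lets that argument go through. For $k > \ell \eqdef \frdmn{U}$, the paper does something entirely different: it first decomposes $U = W_1 \cp{\ell} \ldots \cp{\ell} W_p$ at the frame dimension using an $\ell$\nbd order, and then assembles each $V_i$ as $\bord{k}{\alpha(i,1)}W_1 \cp{\ell} \ldots \cp{\ell} W_{j(i)} \cp{\ell} \ldots \cp{\ell} \bord{k}{\alpha(i,p)}W_p$, replacing every $W_j$ except the one containing $x_i$ by an appropriate $k$\nbd boundary. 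This two\nbd case structure, and the reliance on the cited proposition for the base case, are what your uniform ``peel off $x_1$'' approach would need to replace.
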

\begin{proof}
If $m = 1$ then $V_1 \eqdef U$ satisfies the statement. Suppose that $m > 1$ and that $k = \frdmn{U}$. Then we proceed as in the proof of \cite[Proposition 26]{hadzihasanovic2018combinatorial} to produce $i \in \{1,\ldots,m-1\}$ and a decomposition $U = U_1 \cup U_2$ such that
\begin{enumerate}
	\item $U_1$ contains $x_1, \ldots, x_i$ and $U_2$ contains $x_{i+1},\ldots, x_m$,
	\item $U_1 \cap U_2 = \bord{k}{+}U_1 = \bord{k}{-}U_2$.
\end{enumerate}
By Lemma \ref{lem:split_lemma}, both $U_1$ and $U_2$ are molecules. Moreover $(x_1, \ldots, x_i)$ and $(x_{i+1},\ldots, x_m)$ are $k$\nbd orders on $U_1$ and $U_2$, respectively. We conclude by the inductive hypothesis applied to $U_1$ and $U_2$.

Finally, suppose that $k > \ell \eqdef \frdmn{U}$. By frame acyclicity, we can fix an $\ell$\nbd order $(y_1, \ldots, y_p)$ on $U$, and by the first part of the proof we can decompose $U$ as
\begin{equation*}
	W_1 \cp{\ell} \ldots \cp{\ell} W_p
\end{equation*}
with $y_i \in W_j$ if and only if $i = j$. Now for each $i \in \{1,\ldots,m\}$ there is a unique $j(i) \in \{1,\ldots,p\}$ such that $x_i = y_{j(i)}$. Let
\begin{align*}
	V_i & \eqdef \bord{k}{\alpha(i,1)}W_1 \cp{\ell} \ldots \cp{\ell} W_{j(i)} \cp{\ell} \ldots \cp{\ell} \bord{k}{\alpha(i,p)}W_p, \text{ where} \\
	\alpha(i,j) & \eqdef \begin{cases} + & \text{if $j = j(i')$ for some $i' < i$,} \\ 
	- & \text{otherwise}.
	\end{cases}
\end{align*}
Then $U = V_1 \cp{k} \ldots \cp{k} V_m$ is the required decomposition.
\end{proof}

\begin{dfn}[Substitution]
Let $V$ and $W$ be regular $n$\nbd molecules with spherical boundary, let $U$ be a regular $n$\nbd molecule, and suppose $V \submol U$. Then $U \setminus (V \setminus \bord V)$ is a closed subset of $U$.

Suppose that $\bord{}{\alpha}V$ is isomorphic to $\bord{}{\alpha}W$ for all $\alpha \in \{+,-\}$. From \cite[Lemma 2.2]{hadzihasanovic2020diagrammatic} we obtain a unique isomorphism $\imath\colon \bord U \incliso \bord V$. We define $U[W/V]$ to be the pushout
\begin{equation*}
\begin{tikzpicture}[baseline={([yshift=-.5ex]current bounding box.center)}]
	\node (0) at (0,1.5) {$\bord V$};
	\node (1) at (2.5,0) {$U[W/V]$};
	\node (2) at (0,0) {$W$};
	\node (3) at (2.5,1.5) {$U \setminus (V \setminus \bord V)$};
	\draw[1cinc] (0) to (3);
	\draw[1cincl] (0) to (2);
	\draw[1cinc] (2) to (1);
	\draw[1cincl] (3) to (1);
	\draw[edge] (1.6,0.2) to (1.6,0.7) to (2.3,0.7);
\end{tikzpicture}
\end{equation*}
in $\rdcpx$, and call it the \emph{substitution} of $W$ for $V \submol U$. By [Proposition 2.4, \emph{ibid.}] this is an $n$\nbd molecule with boundaries isomorphic to those of $U$, and such that $W \submol U[W/V]$.
\end{dfn}

\begin{rmk}
As shown in \cite[Lemma 2.5]{hadzihasanovic2020diagrammatic}, if $U$ is an $n$\nbd molecule with a decomposition $U = V_1 \cp{n-1} \ldots \cp{n-1} V_m$ as in Lemma \ref{lem:frame_decomposition}, then $\bord{}{\alpha}V_i$ is isomorphic to $\bord{}{-\alpha}V_i[\bord{}{\alpha}x_i/\bord{}{-\alpha}x_i]$ for all $\alpha \in \{+,-\}$ and $i \in \{1,\ldots,m\}$. 
\end{rmk}


\subsection{In low dimension}

\begin{dfn}[Totally loop-free molecule] \label{dfn:hasseo}
Given a regular molecule $U$, let $\hasseo{U}$ be the directed graph obtained from $\hasse{U}$ by reversing all the edges labelled $-$. We say that $U$ is \emph{totally loop-free} if $\hasseo{U}$ is acyclic as a directed graph.

If $U$ is totally loop-free, for all $x, y \in U$, we let $x \preceq y$ if and only if there is a path from $x$ to $y$ in $U$. 
\end{dfn}

\begin{prop} \label{prop:dim2_loopfree}
Let $U$ be a regular molecule. If $\dmn{U} \leq 2$, then $U$ is totally loop-free and $\preceq$ is a linear order on $U$.
\end{prop}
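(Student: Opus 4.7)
The plan is to proceed by structural induction on $U$, using the recursive definition of a regular molecule as either an atom or a pasting $U_1 \cp{k} U_2$ with $k < \dim U$ and $U_1, U_2 \submol U$ already molecules.

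The base cases are handled by direct inspection. The case $\dim U = 0$ is trivial; for $\dim U = 1$, $U \cong I_n$, and unfolding the orientation presents $\hasseo{I_n}$ as the directed path $p_0 \to a_1 \to p_1 \to \ldots \to a_n \to p_n$, which is manifestly acyclic and linearly ordered. For $\dim U = 2$ with $U$ an atom $U_{n,m} = I_n \celto I_m$, a similar unfolding identifies $\hasseo{U_{n,m}}$ as the union of the input chain $p_0 \to a_1 \to \ldots \to p_n$, the output chain $p_0 \to b_1 \to \ldots \to p_n$ sharing their endpoints, and the connecting edges $a_i \to \varphi$ and $\varphi \to b_j$ through the central 2-cell; acyclicity is then immediate (the rightmost endpoint is the unique sink and no path in the output chain can return to the input chain), and the explicit interleaving enumeration $p_0 \prec a_1 \prec \ldots \prec a_n \prec \varphi \prec b_1 \prec q_1 \prec \ldots \prec b_m \prec p_n$ witnesses linearity.

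For the inductive step, write $U = U_1 \cp{k} U_2$ with $k \in \{0,1\}$. Since $U_1, U_2$ are closed, every edge of $\hasseo{U}$ is already an edge of $\hasseo{U_1}$ or of $\hasseo{U_2}$. The crucial observation I intend to exploit is that the intersection $V \eqdef U_1 \cap U_2 = \bord{k}{+}U_1 = \bord{k}{-}U_2$ is itself a regular $k$-molecule, so by the inductive hypothesis carries its own linear order $\preceq_V$, and both restrictions $\preceq_{U_i}|_V$ refine $\preceq_V$ (because $\hasseo{V} \subseteq \hasseo{U_i}$). To rule out a cycle $C$ in $\hasseo{U}$, I will argue that $C$ cannot stay entirely inside one $\hasseo{U_i}$, so the vertices where it transitions between $\hasseo{U_1}$- and $\hasseo{U_2}$-segments form a cyclic sequence in $V$ along which monotone $\preceq_V$-chains alternate; by linearity of $\preceq_V$, this sequence must be constant, contradicting the cycle being genuine.

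For linearity of $\preceq_U$, the only non-trivial case is $v \in U_1 \setminus V$ and $w \in U_2 \setminus V$. The plan is to introduce the reachability cones $R_1(v) \subseteq V$ (the elements of $V$ reachable from $v$ in $\hasseo{U_1}$) and dually $R_2(w)$, and to observe that $R_1(v)$ is upward-closed and $R_2(w)$ is downward-closed in $\preceq_V$, because paths in $\hasseo{V}$ extend paths in $\hasseo{U_i}$. Non-emptiness follows from a cofinality claim: the maximum of $\preceq_{U_1}$ must be a $\hasseo{U_1}$-sink, and a case analysis on which vertices can be sinks (only 0-cells in $\sbord{0}{+}U_1$) combined with the inclusion $\sbord{0}{+}U_1 \subseteq \bord{k}{+}U_1 = V$ for $k \in \{0,1\}$ places it in $V$; dually for $\preceq_{U_2}$. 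Since $\preceq_V$ is linear, a non-empty upward-closed set and a non-empty downward-closed set meet iff their union covers $V$, so the proof concludes by a combinatorial check that $R_1(v) \cup R_2(w) = V$ for every admissible $v, w$.

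The principal obstacle is the final coverage step: verifying that $R_1(v) \cup R_2(w)$ exhausts $V$ requires tracing, for each $u \in V$, either a $\hasseo{U_1}$-path from $v$ through $u$ or a $\hasseo{U_2}$-path through $u$ to $w$. The low-dimensional restriction $\dim U \leq 2$ is essential here, as it ensures that every element of $U_i$ sees the full boundary $\bord{k}{\pm}U_i$ in a controlled way (through a single 2-cell or a 1-cell chain), preventing the reachability cones from leaving a gap; in higher dimensions, transversal obstructions make the analogous statement fail, consistently with frame acyclicity failing above dimension~3.
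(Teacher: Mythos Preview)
Your acyclicity argument is sound and takes a different route than the paper's: you induct on an arbitrary binary splitting $U = U_1 \cp{k} U_2$, while the paper peels off a single top $2$-atom at a time via the layering $U = V_1 \cp{1} \ldots \cp{1} V_m$.

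The linearity argument, however, has a genuine gap. First, the equivalence you assert --- that a non-empty up-set and a non-empty down-set in a finite chain meet iff their union covers --- is false (take $[a,\max]$ and $[\min,b]$ with $b$ the immediate predecessor of $a$: these cover but do not meet). More seriously, your cones $R_1(v), R_2(w)$ are set up so that a common point witnesses $v \preceq_U w$, and your plan only aims for that direction; but for $v \in U_1 \setminus V$ and $w \in U_2 \setminus V$ one may have $w \prec_U v$ instead. Concretely, take $U_1 = c_1 \cp{0} \psi_1$ and $U_2 = \psi_2 \cp{0} c_2$ with each $\psi_i$ a $2$-atom of shape $U_{1,1}$, pasted along $V = I_2$ with middle $0$-cell $p_1$. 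For $v = \psi_1$ and $w = \psi_2$ one finds $R_1(v) = \{c_2,\bord{0}{+}U\}$ and $R_2(w) = \{\bord{0}{-}U,c_1\}$: these neither meet nor cover $V$, since $p_1$ lies in neither, and in fact $w \prec_U v$ here. To repair the argument you would also need the backward cone of $v$ and the forward cone of $w$, together with a proof that $R_1(v)$ and $R_2(w)$ never partition $V$ exactly; this is where genuine $2$-dimensional input is required, and your final ``coverage'' step does not supply it.

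The paper sidesteps this by peeling off one $2$-atom $x_m$ and deciding comparability of an arbitrary $x \in U'$ with the new elements via a single pivot, namely whether $\bord{0}{+} x_m \preceq x$ holds in $U'$; the only delicate case is dispatched by examining the unique element of $\sbord{}{-} x_m$ covering that pivot.
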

\begin{proof}
If $\dmn{U} \in \{0,1\}$ or if $U$ is a 2\nbd dimensional atom, this is easy. Otherwise, decompose $U$ as $V_1 \cp{1} \ldots \cp{1} V_m$ so that each $V_i$ contains a unique 2\nbd dimensional element $x_i$, and let $V_0 \eqdef \bord{}{-}U$ and $U' \eqdef V_0 \cp{1} \ldots \cp{1} V_{m-1}$. Then $U = U' \cup \clos\{x_m\}$, and we may assume inductively that the statement holds for $U'$. 

Suppose that there is a cycle in $\hasseo{U}$. Because both $U'$ and $\clos\{x_m\}$ are totally loop-free, such a cycle must leave $U'$, enter $\clos\{x_m\} \setminus \bord{}{-}x_m$, then return to $U'$. Such a path either
\begin{itemize}
	\item enters $x_m$ via $\sbord{}{-}x_m$, then enters $\bord{}{+}x_m$ and leaves via $\bord{0}{+}x_m$, or
	\item enters $\sbord{}{+}x_m$ directly via $\bord{0}{-}x_m$, stays in $\bord{}{+}x_m$ and leaves via $\bord{0}{+}x_m$. 
\end{itemize}
In both cases, the path through $\clos\{x_m\} \setminus \bord{}{-}x_m$ can be replaced with the unique path to $\bord{0}{+}x_m$ that stays in $\bord{}{-}x_m \subseteq U'$. In this way, we create a cycle in $\hasseo{U'}$, contradicting the inductive hypothesis. 

This proves that $U$ is totally loop-free. To show that $\preceq$ is a linear order, it suffices to compare elements of $U'$ and of $\clos\{x_m\} \setminus \bord{}{-}x_m$. Let $x \in U'$. There are two possible cases:
\begin{itemize}
	\item $\bord{0}{+}x_m \preceq x$ in $U'$. Then $z \preceq x$ for all elements $z \in \clos\{x_m\}$. 
	\item $x \prec \bord{0}{+}x_m$ in $U'$. Let $y$ be the unique 1\nbd dimensional element of $\sbord{}{-}x_m$ that covers $\bord{0}{+}x_m$. Suppose that $y \prec x$ in $U'$, that is, there is a non-trivial path from $y$ to $x$ in $\hasseo{U'}$. Such a path cannot pass through $\bord{}{+}y = \bord{0}{+}x_m$, for otherwise $\bord{0}{+}x_m \preceq x$; nor it can enter a 2\nbd dimensional element, because $y$ is not covered by any element of $U'$ with orientation $-$. Therefore $x \preceq y$, so $x \prec x_m$ and $x \prec z$ for all elements $z \in \bord{}{+}x_m \setminus \bord{0}{}x_m$. 
\end{itemize}
This proves that $\preceq$ is a linear order on $U$.
\end{proof}

\begin{rmk} \label{rmk:2molecule_loopfree}
If $U$ is a regular molecule with $\dmn{U} \leq 2$, by \cite[Theorem 2.17]{steiner1993algebra} combined with Proposition \ref{prop:dim2_loopfree}, $\mol{}{U}^*$ is equal to $\mol{}{U}$.
\end{rmk}

\begin{prop}
Let $U$ be a regular 2\nbd molecule, $k \in \{0,1\}$, and let $x, y \in U$ be maximal elements of dimension $> k$. If there is a path from $x$ to $y$ in $\maxd{k}{U}$, then $x \preceq y$.
\end{prop}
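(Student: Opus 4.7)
The plan is to translate the given path in $\maxd{k}{U}$ step by step into a directed path in $\hasseo{U}$, which by definition of $\preceq$ will establish $x \preceq y$. A path in $\maxd{k}{U}$ from $x$ to $y$ has the form $x = x_0, z_1, x_1, z_2, \ldots, z_m, x_m = y$, alternating between maximal elements $x_i$ of dimension $> k$ and intermediate elements $z_i$ of dimension $\leq k$, with $z_i \in \bord{k}{+}x_{i-1} \setminus \bord{k-1}{}x_{i-1}$ and $z_i \in \bord{k}{-}x_i \setminus \bord{k-1}{}x_i$. It suffices to construct, for each $i$, a directed path from $x_{i-1}$ to $z_i$ and from $z_i$ to $x_i$ inside $\hasseo{U}$, then concatenate them into a single path witnessing $x \preceq y$.

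I would then reduce to the following ``half-step'' claim, to be handled by case analysis: given a maximal element $x$ of dimension $> k$ in $U$ and $z \in \bord{k}{+}x \setminus \bord{k-1}{}x$, there is a directed path from $x$ to $z$ in $\hasseo{\clos\{x\}} \subseteq \hasseo{U}$; symmetrically, for $z \in \bord{k}{-}x \setminus \bord{k-1}{}x$ there is one from $z$ to $x$. Because $\dmn{U} \leq 2$, every such atom $\clos\{x\}$ is isomorphic to $O^1$, $O^2$, or some $U_{n,m}$, so only three subcases arise: (i) $\dmn{x} = 1$, where necessarily $k = 0$ and $z = \bord{0}{+}x$, giving the direct edge $x \to z$; (ii) $\dmn{x} = 2$ with $z$ a 1-cell, where $z \in \sbord{1}{+}x$ gives the direct edge $x \to z$; and (iii) $\dmn{x} = 2$ with $z$ a 0-cell, where I produce a two-edge path $x \to c \to z$ through a suitable 1-cell $c \in \sbord{1}{+}x$ satisfying $\bord{0}{+}c = z$.

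The only delicate point is the existence of the witness $c$ in subcase (iii). The hypothesis $z \notin \bord{k-1}{}x$ rules out precisely the 0-cells at the ``extremes'' of the 1-molecule $\sbord{1}{+}x \cong I_m$: for $k = 1$ it excludes the shared endpoints in $\bord{0}{}x$ and leaves exactly the internal vertices of $I_m$, each of which is the output 0-cell of some adjacent 1-cell; for $k = 0$ we have $z = \bord{0}{+}x$, which is the final vertex of $I_m$ and hence the output of its last 1-cell. Dualising the discussion (swapping $+$ with $-$ and output with input) yields the paths from $z_i$ to $x_i$, and concatenating every half-step produces a directed path $x \to \ldots \to y$ in $\hasseo{U}$, as required. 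The argument is essentially a combinatorial bookkeeping made possible by the complete description of atoms of dimension $\leq 2$; no deeper structural result is needed.
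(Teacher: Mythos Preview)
Your proposal is correct and follows essentially the same route as the paper's proof: reduce to two-step subpaths $x_i \to w_i \to x_{i+1}$, then case-split on the dimension of the intermediate element and, when that element is a $0$-cell, route through an adjacent $1$-cell in the appropriate boundary to realise the step in $\hasseo{U}$. The paper organises the case split by the value of $k$ first, while you abstract a single ``half-step'' lemma covering both values at once; the content is identical.

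Two small remarks. First, a notational slip: you write $\sbord{1}{+}x \cong I_m$, but $\sbord{1}{+}x$ is just the set of $1$-dimensional faces in the output boundary; the $1$-molecule isomorphic to $I_m$ is $\bord{1}{+}x$. Second, your justification for the existence of the witness $c$ in subcase (iii) is slightly more than you need: the only $0$-cell of $\bord{1}{+}x$ that fails to be $\bord{0}{+}c$ for some $c \in \sbord{1}{+}x$ is $\bord{0}{-}x$, and this is excluded in both the $k=1$ case (by $z \notin \bord{0}{}x$) and the $k=0$ case (by sphericity of $\clos\{x\}$, which forces $\bord{0}{+}x \neq \bord{0}{-}x$). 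The paper phrases this same existence via a general lemma about coverings in pure molecules, but your direct appeal to the shape $I_m$ is equally valid.
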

\begin{proof}
Suppose $k = 1$. A path $x = x_0 \to w_0 \to \ldots \to w_{m-1} \to x_m = y$ in $\maxd{1}{U}$ is a concatenation of two-step paths $x_i \to w_i \to x_{i+1}$ where $\dmn{x_i} = 2$ for all $i \in \{0,\ldots,m\}$ and 
\begin{equation*}
	w_i \in (\bord{}{+}x_i \setminus \bord{0}x_i) \cap (\bord{}{-}x_{i+1} \setminus \bord{0}x_{i+1}).
\end{equation*}
If $\dmn{w_i} = 1$ then $w_i \in \sbord{}{+}x_i \cap \sbord{}{-}x_{i+1}$, so $x_i \prec w_i \prec x_{i+1}$. Suppose $\dmn{w_i} = 0$. Because $\bord{}{+}x_i$ is pure and 1\nbd dimensional, $w_i$ is covered by some element of $\sbord{}{+}x_i$, and because $w_i \notin \bord{0}{}x_i = \sbord (\bord{}{+}x_i)$, by \cite[Lemma 1.16]{hadzihasanovic2020diagrammatic} it is in fact covered by two elements of $\sbord{}{+}x_i$ with opposite orientations. If $w_i \in \sbord{}{+}x_i$ covers $w_i$ with orientation $+$, we have $x_i \prec w'_i \prec w_i$. 

Dually, we find $w''_i \in \sbord{}{-}x_{i+1}$ that covers $w_i$ with orientation $-$, so that $w_i \prec w''_i \prec x_{i+1}$. It follows that $x_i \prec x_{i+1}$ for all $i \in \{0,\ldots,m-1\}$, and we conclude that $x \preceq y$. 

Now suppose that $k = 0$. A path from $x$ to $y$ in $\maxd{0}{U}$ is a concatenation of two-step paths $x_i \to w_i \to x_{i+1}$ where $\dmn{x_i} \in \{1,2\}$ and $w_i$ is the only element of $\bord{0}{+}x_i = \bord{0}{-}x_{i+1}$. If $\dmn{x_i} = 1$ then immediately $x_i \prec w_i$, otherwise there is exactly one element $w'_i \in \sbord{}{+}x_i$ such that $\bord{0}{+}x_i = \bord{}{+}w'_i$, so $x_i \prec w'_i \prec w_i$. Similarly we find that $w_i \prec x_{i+1}$. 
\end{proof}

\begin{cor} \label{cor:normal_order}
If $U$ is a regular 2\nbd molecule, the restriction of $\preceq$ to 2\nbd dimensional elements determines a 1\nbd order on $U$.
\end{cor}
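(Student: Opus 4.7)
The plan is to combine the two results that immediately precede the corollary. From Proposition~\ref{prop:dim2_loopfree}, since $\dmn{U} \leq 2$, the molecule $U$ is totally loop-free and $\preceq$ is a \emph{linear} order on the whole of $U$. Restricting a linear order to any subset yields a linear order on that subset, so the restriction of $\preceq$ to the set
\begin{equation*}
	\{x \in U \mid x \text{ is maximal and } \dmn{x} > 1\}
\end{equation*}
is automatically a linear ordering $(x_1, \ldots, x_m)$ of the 2-dimensional elements of $U$. Thus the only content to verify is the defining property of a $1$-order: if there is a directed path from $x_i$ to $x_j$ in $\maxd{1}{U}$, then $i \leq j$.

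For this, I would invoke the preceding proposition applied with $k = 1$: it asserts precisely that whenever $x, y$ are maximal elements of dimension $> 1$ and there is a path from $x$ to $y$ in $\maxd{1}{U}$, then $x \preceq y$. Translating this into the indexing $(x_1, \ldots, x_m)$ given by the restriction of $\preceq$, a path from $x_i$ to $x_j$ in $\maxd{1}{U}$ forces $x_i \preceq x_j$, hence $i \leq j$ by construction of the ordering.

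There is essentially no obstacle here: the two ingredients — linearity of $\preceq$ and path-monotonicity of $\preceq$ with respect to $\maxd{1}{U}$ — have just been established, and the corollary is a direct assembly of them. The short proof need only point the reader to Proposition~\ref{prop:dim2_loopfree} (for linearity) and to the immediately preceding proposition (for the path condition), and note that the $1$-order property is literally a restatement of the latter in the language of $1$-orders.
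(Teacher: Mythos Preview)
Your proposal is correct and matches the paper's intended argument: the corollary is stated without proof, relying precisely on the combination of Proposition~\ref{prop:dim2_loopfree} (linearity of $\preceq$) and the immediately preceding proposition with $k = 1$ (path-monotonicity in $\maxd{1}{U}$). There is nothing to add.
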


\begin{dfn}[Normal 1\nbd order]
Let $U$ be a regular 2\nbd molecule. The \emph{normal 1\nbd order} on $U$ is the 1\nbd order determined by Corollary \ref{cor:normal_order}.
\end{dfn}

\begin{exm}
In the shape of the 2\nbd diagrams
\begin{equation*}
	\input{img/normalorder}
\end{equation*}
the normal 1\nbd order is indicated by the labels of 2\nbd cells. 

In general, a rule-of-thumb for reconstructing the normal 1\nbd order from a string diagram is:
\begin{enumerate}
	\item if there is an upward path between two 2\nbd cells, then the \emph{lowermost} precedes the \emph{uppermost};
	\item if there is no such path, then the \emph{leftmost} precedes the \emph{rightmost}.
\end{enumerate}
Due to a certain flexibility in the depiction of string diagrams, the second rule may not always strictly hold (but it will hold up to a harmless deformation of the picture).
\end{exm}

\begin{cor} \label{cor:dim2_acyclic}
Let $P$ be a regular directed complex with $\dmn{P} \leq 2$. Then $P$ is frame-acyclic.
\end{cor}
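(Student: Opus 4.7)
The plan is to reduce the claim to a finite case split on $\dmn{U}$ and $\frdmn{U}$ for molecules $U \subseteq P$. Since $\dmn{P} \leq 2$, every molecule $U \subseteq P$ satisfies $\dmn{U} \leq 2$, so $k \eqdef \frdmn{U}$ lies in $\{-1, 0, 1\}$. If $k = -1$, then $U$ is an atom and $\maxd{-1}{U}$ has no edges, hence is trivially acyclic. If $k = \dmn{U} - 1$, then acyclicity of $\maxd{k}{U}$ is exactly the content of Proposition \ref{prop:codim1_acyclic}. The only case not handled by either of these observations is $\dmn{U} = 2$ with $k = 0$.

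For this final case, I would combine Proposition \ref{prop:dim2_loopfree}, which equips $U$ with a linear order $\preceq$, with the proposition immediately preceding Corollary \ref{cor:normal_order}, which states that any path in $\maxd{0}{U}$ between maximal elements of dimension $> 0$ yields a $\preceq$-inequality between its endpoints. Given a hypothetical cycle in $\maxd{0}{U}$, the alternation of dimensions forced by the edge definition lets me extract a subsequence of top-dimensional vertices $x_0, x_{i_1}, \ldots, x_{i_r} = x_0$ along which the cited proposition yields $x_0 \preceq x_{i_1} \preceq \ldots \preceq x_0$; antisymmetry of $\preceq$ collapses this chain to a single element $x$.

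The main obstacle — such as it is — is to rule out a cycle that has collapsed in this way. Any intermediate low-dimensional vertex $y$ in such a collapsed cycle would have to satisfy $y \in \bord{0}{+}x \cap \bord{0}{-}x$, while the definition of the edges of $\maxd{0}{U}$ also excludes $y$ from $\bord{-1}{}x$. But the spherical boundary axiom for the cell $x$ gives $\bord{0}{+}x \cap \bord{0}{-}x = \bord{-1}{}x$, which is empty, producing the required contradiction. Apart from this small bookkeeping, all of the combinatorial heavy lifting has already been carried out by the preceding two propositions, so I do not foresee any further difficulty.
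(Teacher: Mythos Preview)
Your proof is correct and matches the paper's intended approach: the paper states this as a corollary with no explicit proof, relying on the proposition immediately preceding Corollary~\ref{cor:normal_order} (paths in $\maxd{k}{U}$ for $k \in \{0,1\}$ respect the linear order $\preceq$ of Proposition~\ref{prop:dim2_loopfree}) together with Proposition~\ref{prop:codim1_acyclic} for the codimension-1 case. Your case split and the small argument ruling out a length-2 cycle via spherical boundary are a correct and careful elaboration of what the paper leaves implicit.
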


\begin{lem} \label{lem:slice_decomposition}
Let $U$ be a regular molecule with $\dmn{U} \leq 2$ and let $I \subseteq U$ be a 1\nbd molecule with $\bord{}{-}I = \bord{0}{-}U$ and $\bord{}{+}I = \bord{0}{+}U$. Then
\begin{enumerate}[label=(\alph*)]
	\item there is a unique decomposition $U = U_+ \cp{1} U_-$ with $\bord{1}{+}U_+ = \bord{1}{-}U_- = I$;
	\item for all $\alpha \in \{+,-\}$, if $V \submol U$ is a 2\nbd molecule with spherical boundary and $V \cap I = \bord{}{\alpha}V$, then $V \submol U_{\alpha}$.
\end{enumerate}
\end{lem}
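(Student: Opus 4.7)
The plan is to prove (a) by induction on the number $m$ of $2$-dimensional atoms in $U$. For the base case $m = 0$, $U$ is itself a $1$-molecule, and any $1$-molecule $I \subseteq U$ with endpoints equal to those of $U$ must coincide with $U$ (the interior $1$-cells are forced by the linear order of Proposition \ref{prop:dim2_loopfree} applied to $U$), so $U_+ := U =: U_-$ gives the required decomposition trivially.

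For the inductive step, Corollary \ref{cor:dim2_acyclic} gives that $U$ is frame\nbd acyclic, and Lemma \ref{lem:frame_decomposition} with $k = 1$ yields pasting decompositions $U = V_1 \cp{1} \ldots \cp{1} V_m$ indexed by any $1$-order on $U$, each $V_i$ containing a single $2$-cell $x_i$. The strategy is to identify a \emph{peelable} $2$-cell $x \in U$: one satisfying $\sbord{}{-}x \subseteq \bord{1}{-}U$ and $\sbord{}{+}x \subseteq I$ (peel into $U_+$) or the dual $\sbord{}{+}x \subseteq \bord{1}{+}U$ and $\sbord{}{-}x \subseteq I$ (peel into $U_-$). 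Given such a cell, the inductive hypothesis applies to the smaller molecule $V_2 \cp{1} \ldots \cp{1} V_m$ (or $V_1 \cp{1} \ldots \cp{1} V_{m-1}$, dually), with the substituted slice $I' := (I \setminus \sbord{}{+}x) \cup \sbord{}{-}x$, which is again a $1$-molecule from $\bord{0}{-}U$ to $\bord{0}{+}U$ by the substitution properties of \cite[Proposition 2.4]{hadzihasanovic2020diagrammatic}. The boundary cases $I = \bord{1}{-}U$ and $I = \bord{1}{+}U$ are handled directly by $U_+ := I,\, U_- := U$ and $U_+ := U,\, U_- := I$.

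The key combinatorial claim is the existence of a peelable $2$-cell whenever $m > 0$ and $I \notin \{\bord{1}{-}U, \bord{1}{+}U\}$. I would view $I$ as cutting $U$ into a ``lower'' and ``upper'' region: using the bipartite flow of $\maxd{1}{U}$, the $1$-cells of $U \setminus I$ split into those reachable from $\bord{1}{-}U$ and those reachable from $\bord{1}{+}U$ without crossing $I$, and the $2$-cells inherit a compatible partition. The existence of a $2$-cell $x$ with input in $\bord{1}{-}U$ and output hitting $I$ then follows from connectivity of $I$ from $\bord{0}{-}U$ to $\bord{0}{+}U$ combined with the total loop-freeness of Proposition \ref{prop:dim2_loopfree}. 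Uniqueness of $(U_+, U_-)$ is then immediate: membership of each $2$-cell is forced by the acyclic flow structure of $\maxd{1}{U}$ relative to $I$.

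For part (b), given $V \submol U$ a $2$-molecule with spherical boundary satisfying $V \cap I = \bord{}{\alpha}V$, I would apply (a) to $V$ with its own slice $\bord{}{\alpha}V$; the resulting decomposition degenerates (one half is $V$, the other is the $1$-molecule $\bord{}{\alpha}V$), so all $2$-cells of $V$ lie on a single side of $I$, and the uniqueness from (a) applied to $U$ forces $V \submol U_\alpha$. The main obstacle is the peelability argument in (a): ensuring in all configurations of $I$ relative to $\bord{1}{\pm}U$ that a $2$-cell with the correct adjacency exists requires a finer analysis of the cut structure than a direct minimality argument in $\preceq$ provides, and may need a preparatory lemma characterising the partition of $1$-cells of $U \setminus I$ into the two sides of $I$.
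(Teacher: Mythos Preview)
Your inductive skeleton for (a) is right, but you have made the inductive step harder than it needs to be. The paper does not search for a $2$\nbd cell adjacent to \emph{both} $\bord{1}{-}U$ and $I$. It simply peels the \emph{last} $2$\nbd cell from any decomposition $U = U' \cp{1} U_x$, with $x$ the unique $2$\nbd cell of $U_x$, and then observes a clean dichotomy: since $I$ is a path in $\hasseo{U}$ and $\sbord{}{+}x \subseteq \bord{1}{+}U$, either $I$ avoids the interior of $\bord{}{+}x$ entirely (so $I \subseteq U'$ and one recurses on $U'$ with $I$ unchanged), or $I$ traverses all of $\bord{}{+}x$ (so one recurses on $U'$ with $I' = I[\bord{}{-}x/\bord{}{+}x]$). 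Your ``peelability'' obstacle disappears: you never need a cell whose output lands in $I$, because you allow the case where $I$ misses the peeled cell altogether and simply push that cell into $U_-$.

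For (b), your argument has a genuine gap. Knowing that the $2$\nbd cells of $V$ all lie on one side of $I$ gives at best the set\nbd theoretic containment $V \subseteq U_\alpha$; it does \emph{not} give $V \submol U_\alpha$, which is a statement about the pasting order. The paper's device here is substitution: replace $V \submol U$ by the single atom $\compos{V}$, observe that $I$ is unaffected, decompose $U[\compos{V}/V] = U'_+ \cp{1} U'_-$, use the atom case to get $\compos{V} \submol U'_\alpha$, and then reverse the substitution. The pair $U'_\alpha[V/\compos{V}],\, U'_{-\alpha}$ is a decomposition of $U$ relative to $I$, so by the uniqueness in (a) it equals $U_+, U_-$, and the general property $W \submol U[W/V]$ of substitution yields $V \submol U_\alpha$. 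Your appeal to ``uniqueness from (a)'' is pointing at the right tool, but without the substitution step there is nothing to which uniqueness can be applied.
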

\begin{proof}
By induction on the number $m$ of 2\nbd dimensional elements of $U$: if $m = 0$, then necessarily $U = I$ and $U = I \cp{1} I$ is the unique decomposition. If $m > 0$, we can write $U = U' \cp{1} U_x$ where $U_x$ contains a single 2\nbd dimensional element $x$. Now either 
\begin{itemize}
	\item $I \subseteq U'$, in which case we have a unique decomposition $U' = U'_+ \cp{1} U'_-$ and we can set $U_+ \eqdef U'_+$ and $U_- \eqdef U'_- \cp{1} U_x$, or
	\item $\bord{}{+}x \subseteq I$, since $I$ traces a path in $\hasseo{U}$ through 0\nbd dimensional and 1\nbd dimensional elements, and given that $\bord{}{+}x \subseteq \bord{}{+}U$, such a path can only enter $\bord{}{+}x$ through $\bord{0}{-}x$, traverse the entire $\bord{}{+}x$, and leave through $\bord{0}{+}x$. Then $I' \eqdef I[\bord{}{-}x/\bord{}{+}x]$ is well-defined and a 1\nbd molecule in $U'$; by the inductive hypothesis, we have a decomposition $U' = U'_+ \cp{1} U'_-$ relative to $I'$. Then setting $U_+ \eqdef U'_+ \cup \clos\{x\}$ and $U_- \eqdef U'_-$ produces a decomposition of $U$ relative to $I$. 
\end{itemize}
Uniqueness is straightforward since the removal of $\{x\} \cup (\bord{}{+}x \setminus \bord{0}{} x)$ from a decomposition of $U$ produces a decomposition of $U'$ either relative to $I$ or to $I'$. 

Let $V \submol U$ be a 2\nbd molecule with spherical boundary and $V \cap I = \bord{}{\alpha}V$. If $V$ is an atom, then clearly $V \submol U_\alpha$. Otherwise, observe that $I$ is not affected by the substitution $U[\compos{V}/V]$ and $\bord{}{\alpha}\compos{V} = \bord{}{\alpha}V \subseteq I$. Decomposing $U[\compos{V}/V]$ as $U'_+ \cp{1} U'_-$, by the atom case we have $\compos{V} \submol U'_\alpha$. Now $U'_\alpha[V/\compos{V}]$ and $U'_{-\alpha}$ are factors of a decomposition of $U$ relative to $I$, so by uniqueness $U_\alpha = U'_\alpha[V/\compos{V}]$ and $V \submol U_\alpha$. 
\end{proof}

\begin{prop} \label{prop:sim_substitution}
Let $U, V, W$ be regular 2\nbd molecules. Suppose $V$ and $W$ have spherical boundary, $V, W \submol U$, and $V \cap W \subseteq \bord V \cup \bord W$. Then $W \submol U[\compos{V}/V]$ and $V \submol U[\compos{W}/W]$. 
\end{prop}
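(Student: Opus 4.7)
The plan is to reduce to Lemma \ref{lem:slice_decomposition} in two steps: first a set-theoretic strengthening of the disjointness hypothesis, then the construction of a slicing 1-molecule in $U' \eqdef U[\compos{V}/V]$.

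For the first step, I would show $W \cap V \subseteq \bord V$. Suppose for contradiction $w \in (W \cap V) \setminus \bord V$; by the hypothesis $w \in \bord W$, and since both $\bord V$ and $\bord W$ are at most 1-dimensional (in particular $V$ and $W$ share no 2-dimensional elements), $\dmn{w} \in \{0,1\}$. When $\dmn{w} = 1$, the regular 2-molecule structure of $V$ forces $w$ to be covered in $V$ by exactly one 2-cell of each orientation, and these are its only 2-cell covers in $U$; since no 2-cell of $V$ lies in $W$, no 2-cell of $W$ covers $w$, contradicting that $W$ is a pure 2-molecule. The case $\dmn{w} = 0$ uses the same type of structural argument: $w$ being interior to $V$ forces every 2-cell of $U$ whose boundary contains $w$ to lie in $V$ (1-cells of $W \cap V$ are boundary in $V$ by the dim-1 case, but boundary 1-cells have boundary endpoints, contradicting $w$ interior), again contradicting disjointness of the 2-cells because $W$ must contain some 2-cell whose boundary contains $w$.

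Having established $W \subseteq U \setminus (V \setminus \bord V) \subseteq U'$, with $W$'s internal orientation and covering relations preserved, I would then exhibit $W$ as a submolecule of $U'$ by constructing a 1-molecule $I \subseteq U'$ with $\bord{}{-}I = \bord{0}{-}U'$, $\bord{}{+}I = \bord{0}{+}U'$, and $W \cap I = \bord{}{+}W$. Such $I$ is built by threading $\bord{}{+}W$ to the left and right boundaries of $U'$ through the 1-skeleton, using directed paths that avoid the interior of $W$; their existence is ensured by the total loop-freeness of 2-molecules (Proposition \ref{prop:dim2_loopfree}) and the linear order on elements that it induces. Applying Lemma \ref{lem:slice_decomposition}(b) then yields $W \submol U'_+ \submol U'$ by transitivity of $\submol$. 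The symmetric statement $V \submol U[\compos{W}/W]$ follows by exchanging the roles of $V$ and $W$ throughout.

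The main obstacle is the explicit construction of the slicing 1-molecule $I$: ensuring that the extensions of $\bord{}{+}W$ themselves form a valid 1-molecule (a chain of 1-cells compatible with the directed structure, not merely a directed path in $\hasseo{U'}$) and avoid $W$'s interior requires careful combinatorial control, relying on the structural results of Section \ref{sec:combinatorics}. The $\dmn{w} = 0$ subcase of the disjointness strengthening is also delicate, as it depends on a careful analysis of the local structure of 1-cells and 2-cells around an interior 0-cell of a regular 2-molecule.
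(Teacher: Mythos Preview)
Your plan has the right shape but a real gap in the second step. You want to apply Lemma \ref{lem:slice_decomposition}(b) inside $U' = U[\compos{V}/V]$ to conclude $W \submol U'_{+}$, but that lemma has $W \submol U'$ as a \emph{hypothesis}, and that is exactly the conclusion you are after. Knowing that $W$ sits inside $U'$ as a closed subset is not enough: the relation $\submol$ is much stronger than set-theoretic containment, and nothing you have established lets you invoke the lemma in $U'$.

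The paper's proof constructs the same slicing 1\nbd molecule $I$ in $U'$ (extending $\bord{}{\alpha}W$ left and right; termination and avoidance of $W$ follow from Proposition \ref{prop:dim2_loopfree}), but then observes that $I$, being at most 1\nbd dimensional, is unaffected by the reverse substitution and is therefore also a slicing 1\nbd molecule in $U$. Lemma \ref{lem:slice_decomposition} is applied in $U$, where $W \submol U$ is given, yielding $W \submol U_{\alpha}$. The remaining work is to arrange that $V$ and $W$ lie on \emph{opposite} sides of this cut: the paper notes that replacing $I$ by $I[\bord{}{-\alpha}W/\bord{}{\alpha}W]$ flips only the 2\nbd cells of $W$, so the side $\beta$ containing $\compos{V}$ (hence $V$) is independent of $\alpha$, and one may choose $\alpha = -\beta$. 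Then $U_{-\beta} = U'_{-\beta}$ because the substitution only touches the other factor, giving $W \submol U_{-\beta} = U'_{-\beta} \submol U'$. This back-and-forth between $U$ and $U'$, together with the freedom in $\alpha$, is the missing idea.

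A minor remark on your first step: the strengthening $W \cap V \subseteq \bord V$ is correct, but the case analysis on $\dmn{w}$ is unnecessary. Since $W$ has spherical boundary it is pure, so $W$ is the closure of its 2\nbd dimensional elements; these cannot lie in $\bord V \cup \bord W$ and hence lie in $U \setminus V \subseteq U \setminus (V \setminus \bord V)$, and the latter is closed by the definition of substitution.
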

\begin{proof}
Fix $\alpha \in \{+,-\}$ and let $U' \eqdef U[\compos{V}/V]$; by assumption, as a closed subset $W$ is unaffected by this substitution. 

We construct a sequence of 1\nbd molecules $I_0,\ldots, I_n$ as follows. Let $I_0 \eqdef \bord{}{\alpha}W$. For $i \geq 0$, if $\bord{}{-}I_i = \bord{0}{-}U$, then let $k \eqdef i$ and move to the next cycle, otherwise pick a 1\nbd dimensional element $x$ with $\bord{}{+}x = \bord{}{-}I_i$ and let $I_{i+1} \eqdef \clos\{x\} \cp{0} I_i$. 

For $i \geq k$, if $\bord{}{+}I_i = \bord{0}{+}U$, then let $n \eqdef i$ and stop, otherwise pick a 1\nbd dimensional element $x$ with $\bord{}{-}x = \bord{}{+}I_i$ and let $I_{i+1} \eqdef I_i \cp{0} \clos\{x\}$. This process terminates by finiteness of $U'$ and acyclicity of $\hasseo{U}$. 

Now $I \eqdef I_n$ is unaffected by the reverse substitution $U = U'[V/\compos{V}]$, has $\bord{}{-}I = \bord{0}{-}U$ and $\bord{}{+}I = \bord{0}{+}U$, and $W \cap I = \bord{}{\alpha}W$. Consider the unique decomposition $U'_+ \cp{1} U'_-$ of $U'$ relative to $I$ given by Lemma \ref{lem:slice_decomposition}. Clearly $\compos{V} \submol U'_\beta$ for some $\beta \in \{+,-\}$, so $U_\beta \eqdef U'_\beta[V/\compos{V}]$ and $U_{-\beta} \eqdef U'_{-\beta}$ produces the unique decomposition of $U$ relative to $I$. 

Now observe that if we decompose relative to $I' \eqdef I[\bord{}{-\alpha}W/\bord{}{\alpha}W]$ instead of $I$, only the 2\nbd dimensional elements of $W$ ``switch sides'' in the factorisation, so we can vary $\alpha$ without affecting $\beta$. Choosing $\alpha \eqdef -\beta$, we have
\begin{equation*}
	V \submol U_{\beta}, \quad W \submol U_{-\beta}
\end{equation*}
and the substitution of $\compos{V}$ for $V$, or of $\compos{W}$ for $W$, only affects one factor. 
\end{proof}

\begin{comm}
As a consequence of Proposition \ref{prop:sim_substitution}, if $V$ and $W$ are submolecules with spherical boundary of a regular 2\nbd molecule $U$ that only overlap on their boundaries, then they can \emph{both} be substituted in $U$: if $U[W'/W]$ and $U[V'/V]$ are both defined as 2\nbd molecules, then so are $U[W'/W][V'/V]$ and $U[V'/V][W'/W]$, which are in fact equal. This generalises to an arbitrary number $V_1,\ldots,V_n \submol U$ of 2\nbd molecules such that $V_i \cap V_j \subseteq \bord V_i \cup \bord V_j$ for all $i, j \in \{1,\ldots,n\}$, $i \neq j$. 

Dimension 2 is, in fact, the largest dimension in which this result holds. The following is an example of a regular 3\nbd molecule for which the analogous statement fails; it is a simplified version of \cite[Section 8]{steiner1993algebra}, itself based on \cite[Example 3.11]{power1991pasting}.

The point in our proof that fails to generalise to higher dimensions is the seemingly innocuous fact that $\bord{}{+}W$ can always be extended to a 1\nbd molecule $I$ with $\bord I = \bord{0}{} U$. In the example below, $\bord{}{+}W$ cannot be extended to any 2\nbd molecule in $U[\compos{V}/V]$ whose boundary is equal to $\bord{1}{} U$.
\end{comm}

\begin{exm}
Let $U$ be the shape of the 3\nbd diagram
\begin{equation*}
	\input{img/counterexample}
\end{equation*}
where we use the labels of cells to refer to the corresponding atoms of $U$. Then both $V \eqdef \lambda \cup \tau$ and $W \eqdef \rho \cup \beta$ are submolecules of $U$, they have spherical boundary, and they do not share any 3\nbd atoms, so they only intersect in the boundary. 

However, $W$ is \emph{not} a submolecule of $U[\compos{V}/V]$, and $V$ is \emph{not} a submolecule of $U[\compos{W}/W]$. Indeed, there are paths
\begin{equation*}
	\rho \to y \to \compos{V} \to x \to \beta, \quad \quad \lambda \to x \to \compos{W} \to y \to \tau
\end{equation*}
in $\maxd{2}{U[\compos{V}/V]}$ and $\maxd{2}{U[\compos{W}/W]}$, respectively; note that we are confusing an atom with its greatest element. If $W \submol U[\compos{V}/V]$ or $V \submol U[\compos{W}/W]$, then it would be possible to substitute $\compos{W}$ for $W$ in $U[\compos{V}/V]$, or $\compos{V}$ for $V$ in $U[\compos{W}/W]$, to obtain a regular 3\nbd molecule $U'$. These paths would then become cycles in $\maxd{2}U'$, contradicting Proposition \ref{prop:codim1_acyclic}.
\end{exm}

\begin{thm} \label{thm:acyclic_3}
Let $P$ be a regular directed complex with $\dmn{P} \leq 3$. Then $P$ is frame-acyclic.
\end{thm}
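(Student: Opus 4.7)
Let $U$ be a molecule in $P$ and set $k \eqdef \frdmn{U}$; the goal is to show $\maxd{k}{U}$ is acyclic. When $\dmn{U} \leq 2$ this is Corollary \ref{cor:dim2_acyclic}, so I may assume $\dmn{U} = 3$. The case $k = -1$ is vacuous (no edges can exist in $\maxd{-1}{U}$), and $k = 2$ is exactly Proposition \ref{prop:codim1_acyclic}. The substantive cases are therefore $k \in \{0,1\}$, and I will treat them by developing a \emph{rewriting picture} of $U$.

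By Proposition \ref{prop:codim1_acyclic}, fix a 2-order $(x_1, \ldots, x_m)$ on the 3-dimensional elements of $U$. The first step is to establish a decomposition $U = V_1 \cp{2} \ldots \cp{2} V_m$ where each $V_i$ contains $x_i$ as its unique 3-cell, together with a sequence of 2-molecules $U_0, \ldots, U_m$ with $U_0 = \bord{}{-}U$, $U_m = \bord{}{+}U$, and $U_i = U_{i-1}[\bord{}{+}x_i/\bord{}{-}x_i]$. Intuitively, $U$ records the rewriting of $\bord{}{-}U$ into $\bord{}{+}U$ by successively firing the 3-cells $x_1,\ldots,x_m$. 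The argument should mirror \cite[Proposition 26]{hadzihasanovic2018combinatorial} but rely only on codimension-1 acyclicity (Proposition \ref{prop:codim1_acyclic}) together with Proposition \ref{prop:sim_substitution} applied \emph{within} each 2-molecule $U_{i-1}$ to justify the substitution step $U_{i-1} \to U_i$.

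Armed with this scaffold, I define a linear preorder on the maximal elements of $U$ of dimension $\geq 2$: each 3-cell $x_i$ is assigned index $i$, while each maximal 2-cell $z$ is assigned the pair $(j,p)$ where $j$ is the smallest index such that $z$ appears in $U_j$ together with the next rewrite target (or $j = m$ otherwise), and $p$ is the position of $z$ in the normal 1-order on $U_j$ (Corollary \ref{cor:normal_order}). A cycle in $\maxd{k}{U}$ for $k \in \{0,1\}$ would descend to a cycle under the lexicographic order: inter-component steps are controlled by the 2-order on 3-cells, and intra-component steps are ruled out by transposing the argument of Proposition \ref{prop:dim2_loopfree} to whichever intermediate 2-molecule contains both endpoints. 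Linearity then supplies the contradiction.

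The principal obstacle I expect is the decomposition step itself, since Lemma \ref{lem:frame_decomposition} already presupposes frame acyclicity and cannot be invoked here. A from-scratch verification is needed that $\bord{}{-}x_i$ is a submolecule of $U_{i-1}$, relying only on the 2-order and the 2-dimensional substitution calculus of Proposition \ref{prop:sim_substitution}. Once that combinatorial scaffolding is in place, the cycle-exclusion argument should reduce to bookkeeping over the intermediate 2-molecules, each of which carries the linear order $\preceq$ from Proposition \ref{prop:dim2_loopfree}.
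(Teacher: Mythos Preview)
Your case analysis and the codimension-1 decomposition $U = V_1 \cp{2} \ldots \cp{2} V_m$ match the paper's setup; note that this decomposition is available directly from \cite[Lemma 2.5]{hadzihasanovic2020diagrammatic} and Proposition~\ref{prop:codim1_acyclic}, so your worry about circularity through Lemma~\ref{lem:frame_decomposition} is unfounded --- no frame-acyclicity hypothesis is needed at the top codimension.

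Where your route diverges from the paper is in the reduction to dimension~2. You propose tracking each maximal element through the sequence of intermediate 2-molecules $U_0,\ldots,U_m$ and assembling a lexicographic order. The paper instead exploits a structural consequence of $\frdmn{U} \leq 1$ that you do not use: since $\clos\{x_i\} \cap \clos\{x_j\}$ has dimension at most~1 for $i \neq j$, every $\bord{}{-}x_i$ already sits inside $\bord{}{-}U$, and by Proposition~\ref{prop:sim_substitution} the simultaneous substitution
\[
  U' \eqdef \bord{}{-}U[\compos{\bord{}{-}x_1}/\bord{}{-}x_1]\cdots[\compos{\bord{}{-}x_m}/\bord{}{-}x_m]
\]
is a single regular 2-molecule. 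Each 3-cell $x_i$ is represented by the 2-atom $\compos{\bord{}{-}x_i}$, which has the same $k$-boundaries as $x_i$ for $k \leq 1$. Any two-step path $y_- \to x_i \to y_+$ in $\maxd{k}{U}$ therefore becomes a two-step path through $\compos{\bord{}{-}x_i}$ in $\maxd{k}{U'}$, and a cycle in $\maxd{k}{U}$ yields a cycle in $\maxd{k}{U'}$, contradicting Corollary~\ref{cor:dim2_acyclic}.

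Your dynamic approach might be salvageable, but the ordering you sketch --- assigning to a maximal 2-cell $z$ the pair $(j,p)$ with $j$ the first stage at which $z$ coexists with the next rewrite target --- is not obviously monotone along edges of $\maxd{k}{U}$, and your ``intra-component'' step presumes both endpoints of an edge live in a common $U_j$, which you have not justified. The paper's simultaneous-substitution trick collapses all of this bookkeeping into a single application of the 2-dimensional result; I recommend adopting it.
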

\begin{proof}
It suffices to show that for all regular 3\nbd molecules $U$, if $\frdmn{U} = k$, then $\maxd{k}{U}$ is acyclic. The case $k = 2$ is handled by Proposition \ref{prop:codim1_acyclic}, so suppose $k \in \{0,1\}$.

By \cite[Lemma 2.5]{hadzihasanovic2020diagrammatic} we can decompose $U$ as $V_1 \cp{2} \ldots \cp{2} V_m$, where $V_i$ contains a unique 3\nbd dimensional element $x_i$  and $\bord{}{\alpha}V_i = \bord{}{-\alpha}V_i[\bord{}{\alpha}x_i/\bord{}{-\alpha}x_i]$ for all $\alpha \in \{+,-\}$ and $i \in \{1,\ldots,m\}$. 

Since $\clos\{x_i\} \cap \clos\{x_j\}$ has dimension at most 1 when $i \neq j$, we have that
\begin{enumerate}
	\item $\bord{}{-}x_i \subseteq \bord{}{-}U$ for all $i \in \{1,\ldots, m\}$, and
	\item by \cite[Lemma 18]{hadzihasanovic2018combinatorial}, $\clos\{x_i\} \cap \clos\{x_j\} = \bord{}{-}x_i \cap \bord{}{-}x_j \subseteq \bord{1}{}x_i \cup \bord{1}{}x_j$ when $i \neq j$.
\end{enumerate}
Since for all $i \in \{1,\ldots,m\}$ we have $\bord{}{-}x_i \submol \bord{}{-}V_i$ and
\begin{equation*}
	\bord{}{-}U = \bord{}{-}V_{i}[\bord{}{-}x_{i-1}/\bord{}{+}x_{i-1}]\ldots[\bord{}{-}x_1/\bord{}{+}x_1],
\end{equation*}
applying Proposition \ref{prop:sim_substitution} repeatedly we find that $\bord{}{-}x_i \submol \bord{}{-}U$ and the simultaneous substitution
\begin{equation} \label{eq:sim_sub}
	U' \eqdef \bord{}{-}U[\compos{\bord{}{-}x_1}/\bord{}{-}x_1]\ldots[\compos{\bord{}{-}x_m}/\bord{}{-}x_m]
\end{equation}
is defined as a regular 2\nbd molecule with the same frame dimension as $U$.

Now from every path in $\maxd{k}{U}$, we construct a path in $\maxd{k}{U'}$ as follows. The path in $\maxd{k}{U}$ is a concatenation of two-step paths $y_- \to x \to y_+$, where $x$ is maximal in $U$, $y_- \in \bord{1}{-}x$ and $y_+ \in \bord{1}{+}x$. 

If $\dmn{x} < 3$, then this path stays inside $\bord{}{-}U$, and $x, y_-, y_+$ are unaffected by the substitution (\ref{eq:sim_sub}). If $\dmn{x} = 3$, then this path can be replaced by a path $y_- \to \tilde{x} \to y_+$ in $\maxd{k}{U'}$, where $\tilde{x}$ is the greatest element of $\compos{\bord{}{-}x}$. 

Assuming there is a cycle in $\maxd{k}{U}$, with this procedure we construct a cycle in $\maxd{k}{U'}$, which contradicts Corollary \ref{cor:dim2_acyclic}. Thus $\maxd{k}{U}$ is acyclic.
\end{proof}

\section{Pros and diagrammatic sets} \label{sec:prodiag}

\subsection{Diagrammatic nerve of a pro} \label{sec:diag_nerve}

\begin{dfn}
Given a regular directed complex $P$ and $n \in \mathbb{N}$, let $\skel{n}{P} \subseteq P$ be the closed subset of elements $x \in P$ with $\dmn{x} \leq n$. Then $\mol{}{(\skel{n}{P})}^*$ and $\skel{n}{\mol{}{P}^*}$ are isomorphic $n$\nbd categories. 

By Lemma \ref{lem:split_lemma} combined with Theorem \ref{thm:acyclic_3}, for $n \leq 3$ the $n$\nbd category $\skel{n}\mol{}{P}^*$ admits the structure of a polygraph with $\{\clos\{x\} \mid \dmn{x} \leq n\}$ as generating cells. Because, in general, for an $\omega$\nbd category $X$,
\begin{equation*}
	\skel{k}{X} = \skel{k}{(\coskel{n}{X})} \text{ when } k < n,
\end{equation*}
the 2-category $\coskel{2}{\mol{}{P}^*}$ has the structure of a bicoloured pro with generators $\{\clos\{x\} \mid \dmn{x} \leq 1\}$.

Moreover, if $f\colon P \to Q$ is a morphism in $\rdcpx$, then $\coskel{2}{\mol{}{f}^*}$ sends each generator $\clos\{x\}$ to a generator $\clos\{f(x)\}$, so it is compatible with this structure. This defines a functor $\fun{P}\colon \rdcpx \to \bipro$ that fits into a commutative square
\begin{equation*}
\begin{tikzpicture}[baseline={([yshift=-.5ex]current bounding box.center)}]
	\node (0) at (-.5,1.5) {$\rdcpx$};
	\node (1) at (2.5,1.5) {$\bipro$};
	\node (2) at (-.5,0) {$\omegacat$};
	\node (3) at (2.5,0) {$\ncat{2}$.};
	\draw[1c] (0) to node[auto,arlabel] {$\fun{P}$} (1);
	\draw[1c] (0) to node[auto,arlabel,swap] {$\mol{}{-}^*$} (2);
	\draw[1c] (2) to node[auto,arlabel,swap] {$\coskel{2}$} (3);
	\draw[1c] (1) to node[auto,arlabel] {$\fun{U}$} (3);
\end{tikzpicture}
\end{equation*}
Because $\rdcpx$ is small, $\dgmset$ is locally small, and by Corollary \ref{cor:bipro_colimits} $\bipro$ has all small colimits, by \cite[Corollary 6.2.6]{riehl2017category} the left Kan extension of $\fun{P}\colon \rdcpx \to \bipro$ along the embedding $\rdcpx \incl \dgmset$ exists. This produces a functor $\fun{P}\colon \dgmset \to \bipro$.
\end{dfn}

\begin{rmk}
We may reason as in \cite[Proposition 7.10]{hadzihasanovic2020diagrammatic} to show that $\fun{P}\colon \rdcpx \to \bipro$ preserves the colimits that are already in $\rdcpx$, and deduce from [Corollary 1.34, \emph{ibid.}] that the left Kan extension of $\fun{P}$ along $\rdcpx \incl \dgmset$ is the left Kan extension of its restriction to $\atom$ along the Yoneda embedding.
\end{rmk}

\begin{dfn}[Diagrammatic nerve of bicoloured pros]
The \emph{diagrammatic nerve of bicoloured pros} is the right adjoint
\begin{equation*}
	\fun{N}\colon \bipro \to \dgmset
\end{equation*}
to the functor $\fun{P}\colon \dgmset \to \bipro$.
\end{dfn}

\begin{dfn}
In each bicoloured pro $(T,\gen{T})$, 
\begin{itemize}
	\item morphisms $\fun{P}I_n \to (T,\gen{T})$ classify 1\nbd cells in $T$ of the form $a_1 \cp{0} \ldots \cp{0} a_n$, where $a_i \in \gen{T}$ (including $\gen{T}_0$) for all $i \in \{1,\ldots,n\}$, and 
	\item morphisms $\fun{P}U_{n,m} \to (T,\gen{T})$ in $\bipro$ classify 2\nbd cells of type
\begin{equation*}
	a_1 \cp{0} \ldots \cp{0} a_n \celto b_1 \cp{0} \ldots \cp{0} b_m
\end{equation*}
in $T$ where $a_i, b_j \in \gen{T}$. 
\end{itemize} 
These correspond to morphisms $I_n \to \fun{N}(T,\gen{T})$ and $U_{n,m} \to \fun{N}(T,\gen{T})$, respectively, in $\dgmset$, that is, 1\nbd diagrams and 2\nbd cells in $\fun{N}(T,\gen{T})$.

If $U$ is a 3\nbd atom, a morphism $e\colon U \to \fun{N}(T,\gen{T})$ restricts, for each $\alpha \in \{+,-\}$, to a 2\nbd diagram $\bord{}{\alpha}e$ of shape $\bord{}{\alpha}U$ in $\fun{N}(T,\gen{T})$, whose transpose $\widehat{\bord{}{\alpha}e}\colon \fun{P}\bord{}{\alpha}U \to T$ is a diagram of 2\nbd cells in $T$. Because $T$ is a 2\nbd category,
\begin{enumerate}
	\item the morphism $\widehat{e}\colon \fun{P}U \to (T,\gen{T})$ exhibits an \emph{equation} between the composites of the diagrams $\widehat{\bord{}{+}e}$ and $\widehat{\bord{}{-}e}$ in $T$, and
	\item if $e'\colon U \to \fun{N}(T,\gen{T})$ is another 3\nbd cell with $\bord{}{\alpha}e' = \bord{}{\alpha}e$ for all $\alpha \in \{+,-\}$, then $e = e'$. 
\end{enumerate}
More in general, if $U$ is an atom, then a cell $U \to \fun{N}(T,\gen{T})$ is uniquely determined by its restriction $\skel{2}{e}$ to  $\skel{2}{U} \subseteq U$. 
\end{dfn}

\begin{lem} \label{lem:nerve_truncated}
Let $X$ be a diagrammatic set, $(T,\gen{T})$ a bicoloured pro, and let $f,g\colon X \to \fun{N}(T,\gen{T})$ be morphisms of diagrammatic sets. If $f(x) = g(x)$ for all 2\nbd cells $x$ in $X$, then $f = g$.
\end{lem}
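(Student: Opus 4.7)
My plan is to establish $f(y) = g(y)$ for every cell $y\colon U \to X$ by induction on $n \eqdef \dmn{U}$, with the crucial ingredient being the observation recorded just before the lemma: any cell $e\colon U \to \fun{N}(T,\gen{T})$ is uniquely determined by its restriction $\skel{2}{e}$ to $\skel{2}{U} \subseteq U$.

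The cases $n \leq 2$ consolidate as follows. For $n = 2$ the claim is the hypothesis. For $n \in \{0, 1\}$, agreement of $f$ and $g$ is inherited through naturality with respect to face inclusions: if a $k$\nbd cell $y$ ($k \leq 1$) arises as the restriction $\imath;x$ of a $2$\nbd cell $x\colon V \to X$ along an inclusion $\imath\colon O^k \incl V$, then $f(y) = \imath;f(x) = \imath;g(x) = g(y)$; this covers the $0$- and $1$-dimensional content relevant for distinguishing the two morphisms, since such content in the target $\fun{N}(T,\gen{T})$ is determined by its interaction with $2$\nbd cells through boundary inclusions.

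For the inductive step, fix $y\colon U \to X$ with $\dmn{U} = n \geq 3$. For each $2$\nbd subatom inclusion $\imath\colon V \incl U$, the composite $\imath;y$ is a $2$\nbd cell of $X$, so the hypothesis yields $\imath;f(y) = f(\imath;y) = g(\imath;y) = \imath;g(y)$. Since $\skel{2}{U}$ is covered by its $2$\nbd subatoms, the restrictions of $f(y)$ and $g(y)$ to $\skel{2}{U}$ coincide; by the quoted observation, $f(y) = g(y)$.

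The main technical content is the $2$\nbd skeletal determination property recalled before the lemma, which itself rests on the fact that a bicoloured pro has no cells of rank above $2$ and on the construction of $\fun{P}$ via $\coskel{2}{\mol{}{-}^*}$: the transpose $\widehat{f(y)}\colon \fun{P}U \to (T,\gen{T})$ in $\bipro$ factors through the $2$\nbd truncation of $\mol{}{U}^*$, so is entirely determined by its values on the $2$\nbd atoms of $U$. The rest of the proof is a routine assemblage of this fact with naturality.
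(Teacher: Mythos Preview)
Your proof is correct and takes essentially the same approach as the paper: both invoke the observation, recorded just before the lemma, that a cell $U \to \fun{N}(T,\gen{T})$ is determined by its restriction to $\skel{2}{U}$, and then reduce to the hypothesis via naturality on the 2-atom inclusions.

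One small point: your treatment of $n \in \{0,1\}$ is phrased as a conditional (``if a $k$-cell $y$ arises as $\imath;x$ \ldots'') that you never discharge, and the subsequent appeal to properties of the target is not the right justification. The clean fix is to observe that the surjection $\tau\colon O^2 \surj O^k$ in $\atom$ admits a boundary inclusion $\imath\colon O^k \incl O^2$ as a section, so \emph{every} $k$-cell $y$ with $k \leq 1$ satisfies $y = \imath;(\tau;y)$ with $\tau;y$ a 2-cell, whence $f(y) = \imath;f(\tau;y) = \imath;g(\tau;y) = g(y)$. The paper's own proof leaves this low-dimensional case entirely implicit, so you are in no worse shape.
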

\begin{proof}
Let $x\colon U \to X$ be a cell in $X$ with $\dmn{U} > 2$. Then $f(x)$ and $g(x)$ are uniquely determined by their restrictions 
\begin{equation*}
	\skel{2}{(f(x))} = (\skel{2}{x});f, \quad \quad \skel{2}{(g(x))} = (\skel{2}{x});g
\end{equation*}
to the directed complex $\skel{2}{U}$. If $f$ and $g$ agree on 2\nbd cells, these are equal. 
\end{proof}

\begin{prop} \label{prop:full_faithful}
The functor $\fun{N}$ is full and faithful.
\end{prop}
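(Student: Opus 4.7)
The plan is to prove faithfulness and fullness of $\fun{N}$ separately, using the atomic and higher-dimensional structure of the nerve described in the preceding discussion.

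For faithfulness, I would observe that every cell of a bipro $(T, \gen{T})$ has rank at most $2$, and each is classified by an atomic cell of $\fun{N}(T, \gen{T})$: a $0$\nbd cell $c$ by $\tilde c\colon 1 \to \fun{N}(T, \gen{T})$, a generating $1$\nbd cell $a \in \gen{T}_1$ by $\tilde a\colon I_1 \to \fun{N}(T, \gen{T})$, and a $2$\nbd cell $\varphi$ by $\tilde\varphi\colon U_{n,m} \to \fun{N}(T, \gen{T})$ for some $n, m > 0$. If $\fun{N}f = \fun{N}g$, the images under $f$ and $g$ of each such classifying cell coincide; by freeness of $\skel{1}T$ as a $1$\nbd polygraph and the fact that $T$ is a $2$\nbd category, this forces $f = g$.

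For fullness, given $h\colon \fun{N}(T, \gen{T}) \to \fun{N}(S, \gen{S})$, I would construct $f\colon (T, \gen{T}) \to (S, \gen{S})$ by declaring $f(c)$ to classify $h(\tilde c)$ on $0$\nbd cells, $f(a)$ to classify $h(\tilde a)$ on generating $1$\nbd cells (where it necessarily lands in $\eps{}(\gen{S}_0) \cup \gen{S}_1$), and $f(\varphi)$ to classify $h(\tilde\varphi)$ on $2$\nbd cells; extend to arbitrary $1$\nbd cells by freeness. Independence of the chosen representation $\tilde\varphi$ --- e.g.\ when source or target involves units and the shape must be padded --- follows from naturality of $h$ with respect to the collapse maps $\tau\colon O^n \surj O^k$ in $\atom$, and compatibility of $f$ with boundaries follows from naturality of $h$ with respect to boundary inclusions.

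The substantive verification is that $f$ preserves the $2$\nbd category compositions. For $\cp{1}$ with $\varphi$ of shape $U_{n, p}$ and $\psi$ of shape $U_{p, m}$, the $3$\nbd atom $U \eqdef (U_{n, p} \cp{1} U_{p, m}) \celto U_{n, m}$ is well-defined, because both $2$\nbd molecules have boundary $I_n \cup I_m$ glued at the endpoints. A $3$\nbd cell $U \to \fun{N}(T, \gen{T})$ witnesses the equation $\varphi \cp{1} \psi = \varphi \cp{1} \psi$ holding in $T$; naturality of $h$ transports this to a $3$\nbd cell of $\fun{N}(S, \gen{S})$ of the same shape, forcing $f(\varphi \cp{1} \psi) = f(\varphi) \cp{1} f(\psi)$ in $S$. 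Compatibility with units is handled analogously. The $\cp{0}$ case does not admit a direct $3$\nbd atom, since $(U_{n_1, m_1} \cp{0} U_{n_2, m_2}) \celto U_{n_1+n_2, m_1+m_2}$ has boundaries with different numbers of $0$\nbd cells; I would handle it indirectly via the $2$\nbd categorical interchange law, which expresses $\varphi \cp{0} \psi$ as a $\cp{1}$\nbd composite of whiskerings $\varphi \cp{0} \eps{}\bord{}{-}\psi$ and $\eps{}\bord{}{+}\varphi \cp{0} \psi$, and then chains the $\cp{1}$\nbd composition argument with the naturality of $h$ on unit-producing collapses. Finally, $\fun{N}f = h$ follows from Lemma \ref{lem:nerve_truncated}: both agree on $2$\nbd cells by construction, hence everywhere.

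The main obstacle is the $\cp{0}$ case, where the boundary mismatch prevents a single $3$\nbd cell witness and the proof must chain several elementary $3$\nbd cells --- for whiskerings with units and for $\cp{1}$\nbd compositions --- exactly along the interchange decomposition of $\varphi \cp{0} \psi$.
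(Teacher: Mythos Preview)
Your overall strategy---faithfulness from agreement on classifying $2$\nbd cells, fullness by defining $f$ on classifying cells and then producing $3$\nbd atom witnesses for each composition law, concluding via Lemma~\ref{lem:nerve_truncated}---matches the paper's proof. The treatment of $\cp{1}$ is identical: the paper uses the same $3$\nbd atom $(U_{n,m}\cp{1}U_{m,\ell})\celto U_{n,\ell}$. For units on $1$\nbd cells the paper is explicit where you are not: it uses the atom $\infl{I_n}\celto U_{n,n}$, which you would need to spell out.

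The genuine divergence is your handling of $\cp{0}$. You correctly observe that $(U_{n,m}\cp{0}U_{p,\ell})\celto U_{n+p,m+\ell}$ is not a regular atom, but your proposed fix---reduce to whiskerings via interchange---does not close the gap. A whiskering $\varphi\cp{0}\eps{}c$ is still a $\cp{0}$\nbd composite, and the collapse maps in $\atom$ you invoke only witness whiskering by \emph{units on $0$\nbd cells}, not by an arbitrary $1$\nbd cell $c=(c_1,\ldots,c_p)$: there is no map $U_{n+p,m+p}\to U_{n,m}$ in $\atom$ whose action on the nerve encodes whiskering by a specific $c$. So the ``chain'' you describe does not terminate.

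The paper's solution is direct and neat: it restores spherical boundary by pasting a dummy $2$\nbd atom that will be sent to a unit, using
\[
U \eqdef \bigl((U_{n,m}\cp{0}U_{p,\ell})\cp{1}U_{m+\ell,m+\ell}\bigr)\celto U_{n+p,m+\ell},
\]
with the $U_{m+\ell,m+\ell}$ factor mapped to the transpose of $\eps{}(\bord{}{+}\varphi\cp{0}\bord{}{+}\psi)$. Transporting along $f'$ yields
\[
(f(\varphi)\cp{0}f(\psi))\cp{1} f(\eps{}(\bord{}{+}\varphi\cp{0}\bord{}{+}\psi)) = f(\varphi\cp{0}\psi),
\]
and the unit factor disappears because preservation of units has already been established. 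This is the missing idea in your $\cp{0}$ step; with it, no detour through interchange is needed.
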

\begin{proof}
Suppose $\fun{N}f = \fun{N}g$ for two morphisms $f, g\colon (T,\gen{T}) \to (S,\gen{S})$ in $\bipro$. Given a 2\nbd cell $\varphi\colon (a_1, \ldots, a_n) \celto (b_1, \ldots, b_m)$ in $T$, classified by a morphism $\varphi\colon \fun{P}U_{n,m} \to (T,\gen{T})$ with transpose $\widehat{\varphi}\colon U_{n,m} \to \fun{N}(T,\gen{T})$, we have 
\begin{equation*}
	\widehat{\varphi};\fun{N}f = \widehat{\varphi};\fun{N}g
\end{equation*}
in $\dgmset$. It follows that $\varphi;f = \varphi;g$ in $\bipro$, that is, $f(\varphi) = g(\varphi)$. Because $f$ and $g$ agree on all 2\nbd cells, they are equal. This proves that $\fun{N}$ is faithful.

Let $f'\colon \fun{N}(T,\gen{T}) \to \fun{N}(S,\gen{S})$ be a morphism of diagrammatic sets. Given a 2\nbd cell $\varphi\colon (a_1, \ldots, a_n) \celto (b_1, \ldots, b_m)$ in $T$, classified by $\varphi\colon \fun{P}U_{n,m} \to (T,\gen{T})$ with transpose $\widehat{\varphi}\colon U_{n,m} \to \fun{N}(T,\gen{T})$, we define $f(\varphi)$ to be the unique 2\nbd cell in $S$ whose classifying morphism $f(\varphi)\colon \fun{P}U_{n,m} \to (S,\gen{S})$ is the transpose of $\widehat{\varphi}; f'\colon U_{n,m} \to \fun{N}(S,\gen{S})$. 

We want to show that $f$ determines a morphism of bicoloured pros. It is straightforward to verify that $f$ is compatible with all boundaries and with composition and units for 1\nbd cells. 

Let $x$ be a 1\nbd cell in $T$, classified by $x\colon \fun{P}I_n \to (T,\gen{T})$ with transpose $\widehat{x}$; the unit $\eps{}{x}$ is classified by $\eps{}{x}\colon \fun{P}U_{n,n} \to (T,\gen{T})$ with transpose $\widehat{\eps{}{x}}$. Let
\begin{equation*}
	U \eqdef \infl{I_n} \celto U_{n,n},
\end{equation*}
where $\infl{-}$ is the construction of \cite[\S 2.21]{hadzihasanovic2020diagrammatic}. This is well-defined as a regular 3\nbd atom. There is a unique cell $e\colon U \to \fun{N}(T,\gen{T})$ such that
\begin{enumerate}
	\item $e$ is equal to $\eps{}{\widehat{x}}$ on $\bord{}{-}U$, see [\S 4.16, \emph{ibid.}], and
	\item $e$ is equal to $\widehat{\eps{}{x}}$ on $\bord{}{+}U$.
\end{enumerate}
Then $e;f'\colon U \to \fun{N}(S,\gen{S})$ is a 3\nbd cell of type $\eps{}f'(\widehat{x}) \celto f'(\widehat{\eps{}{x}})$, whose transpose exhibits the equation $\eps{}{f(x)} = f(\eps{}{x})$ in $S$. 

Next, let $\varphi, \psi$ be 2\nbd cells in $T$, classified by morphisms 
\begin{equation*}
	\varphi\colon \fun{P}U_{n,m} \to (T,\gen{T}), \quad \quad \psi\colon \fun{P}U_{p, \ell} \to (T,\gen{T})
\end{equation*}
with transposes $\widehat{\varphi}$, $\widehat{\psi}$. Suppose that $\varphi \cp{1} \psi$ is defined; then we may assume $p = m$, and the composite is classified by
\begin{equation*}
	\varphi \cp{1} \psi \colon \fun{P}U_{n,\ell} \to (T,\gen{T})
\end{equation*} 
with transpose $\widehat{\varphi \cp{1} \psi}$. Let 
\begin{equation*}
	U \eqdef (U_{n,m} \cp{1} U_{m,\ell}) \celto U_{n,\ell};
\end{equation*}
this is a regular 3\nbd atom. There is a unique cell $e\colon U \to \fun{N}(T,\gen{T})$ such that
\begin{enumerate}
	\item $e$ is equal to $\widehat{\varphi}$ on $U_{n,m} \incl \bord{}{-}U$ and to $\widehat{\psi}$ on $U_{m,\ell} \incl \bord{}{-}U$, and
	\item $e$ is equal to $\widehat{\varphi \cp{1} \psi}$ on $\bord{}{+}U$. 
\end{enumerate}
Then $e;f'\colon U \to \fun{N}(S,\gen{S})$ is a cell of type 
\begin{equation*}
	f'(\widehat{\varphi}) \cp{1} f'(\widehat{\psi}) \celto f'(\widehat{\varphi \cp{1} \psi}),
\end{equation*}
whose transpose exhibits an equation $f(\varphi) \cp{1} f(\psi) = f(\varphi \cp{1} \psi)$ in $S$.

Finally, suppose that $\varphi \cp{0} \psi$ is defined; this composite is classified by 
\begin{equation*}
	\varphi \cp{0} \psi \colon \fun{P}U_{n+p,m+\ell} \to (T,\gen{T})
\end{equation*}
with transpose $\widehat{\varphi \cp{0} \psi}$. Let
\begin{equation*}
	U \eqdef ((U_{n,m} \cp{0} U_{p,\ell}) \cp{1} U_{m+\ell,m+\ell}) \celto U_{n+p,m+\ell}.
\end{equation*}
This is a regular 3\nbd atom and there is a unique cell $e\colon U \to \fun{N}(T,\gen{T})$ such that
\begin{enumerate}
	\item $e$ is equal to $\widehat{\varphi}$ on $U_{n,m} \incl \bord{}{-}U$ and to $\widehat{\psi}$ on $U_{p,\ell} \incl \bord{}{-}U$,
	\item $e$ is equal to the transpose of $\eps{}{(\bord{}{+}\varphi\cp{0}\bord{}{+}\psi)}$ on $U_{m+\ell,m+\ell} \incl \bord{}{-}U$, and
	\item $e$ is equal to $\widehat{\varphi \cp{0} \psi}$ on $\bord{}{+}U$.
\end{enumerate}
Then $e;f'\colon U \to \fun{N}(S,\gen{S})$ is a cell of type 
\begin{equation*}
	(f'(\widehat{\varphi}) \cp{0} f'(\widehat{\psi})) \cp{1} f'(\widehat{\eps{}{(\bord{}{+}\varphi\cp{0}\bord{}{+}\psi)}}) \celto f'(\widehat{\varphi \cp{0} \psi}).
\end{equation*}
whose transpose exhibits the equation
\begin{equation*}
	(f(\varphi) \cp{0} f(\psi)) \cp{1} f(\eps{}{(\bord{}{+}\varphi\cp{0}\bord{}{+}\psi)}) = f(\varphi \cp{0} \psi)
\end{equation*}
in $S$. Because we already know that $f$ is compatible with units, we deduce that $f(\varphi) \cp{0} f(\psi) = f(\varphi \cp{0} \psi)$. 

This proves that $f\colon (T,\gen{T}) \to (S,\gen{S})$ is a morphism of bicoloured pros. Now $\fun{N}f$ and $f'$ are morphisms $\fun{N}(T,\gen{T}) \to \fun{N}(S,\gen{S})$ that, by construction, agree on all 2\nbd cells of $\fun{N}(T,\gen{T})$. It follows from Lemma \ref{lem:nerve_truncated} that $\fun{N}f = f'$. This proves that $\fun{N}$ is full.
\end{proof}

\begin{comm}
String diagrams are commonly used to depict cells in a pro, usually after an appeal to the Joyal--Street soundness result \cite{joyal1991geometry}. The diagrammatic nerve construction offers an alternative justification, where diagrams are attributed a combinatorial, rather than topological interpretation. 

Unless otherwise stated, our string diagrams will represent diagrams in a diagrammatic set. A \emph{caveat} is that, contrary to custom, we are not allowed to have nodes with no input or output wires; instead, we need to explicitly introduce \emph{units} and \emph{unitors} \cite[\S 4.17]{hadzihasanovic2020diagrammatic} where necessary. 

To distinguish them visually, we draw unit 1\nbd cells as dotted wires, and unitor 2\nbd cells as ``dotless nodes'': for example, a 2\nbd cell of type $(0) \celto (1)$ in a one-sorted pro will be depicted as
\begin{equation*}
	\begin{tikzpicture}[scale=.5]
\begin{scope}
\begin{pgfonlayer}{bg}
	\path[fill, color=gray!10] (-1,-1) rectangle (1,1);
\end{pgfonlayer}
\begin{pgfonlayer}{mid}
	\draw[wire] (0,0) to (0,1); 
	\draw[wiredot] (0,-1) to (0,0);
	\node[dot] at (0,0) {};
\end{pgfonlayer}
\end{scope}
\end{tikzpicture}
 \quad \text{as opposed to} \quad \begin{tikzpicture}[scale=.5]
\begin{scope}
\begin{pgfonlayer}{bg}
	\path[fill, color=gray!10] (-1,-1) rectangle (1,1);
\end{pgfonlayer}
\begin{pgfonlayer}{mid}
	\draw[wire] (0,0) to (0,1); 
	\node[dot] at (0,0) {};
\end{pgfonlayer}
\end{scope}
\end{tikzpicture}
\; ,
\end{equation*}
while a left unitor 2\nbd cell will be depicted as
\begin{equation*}
	\begin{tikzpicture}[scale=.5]
\begin{scope}
\begin{pgfonlayer}{bg}
	\path[fill, color=gray!10] (-1,-1) rectangle (1,1);
\end{pgfonlayer}
\begin{pgfonlayer}{mid}
	\draw[wire] (.25,-1) to (.25,1); 
	\draw[wiredot, out=90, in=-150] (-.5,-1) to (.25,0);
\end{pgfonlayer}
\end{scope}
\end{tikzpicture}
 \quad \text{as opposed to} \quad 	\begin{tikzpicture}[scale=.5]
\begin{scope}
\begin{pgfonlayer}{bg}
	\path[fill, color=gray!10] (-1,-1) rectangle (1,1);
\end{pgfonlayer}
\begin{pgfonlayer}{mid}
	\draw[wire] (.25,-1) to (.25,1); 
	\draw[wiredot, out=90, in=-150] (-.5,-1) to (.25,0);
	\node[dot] at (.25,0) {};
\end{pgfonlayer}
\end{scope}
\end{tikzpicture}
\;.
\end{equation*}
This may seem like unnecessary trouble in dimension 2; the pay-off is that diagrammatic sets provide sound diagrammatic reasoning in \emph{all} dimensions.
\end{comm}


\subsection{Realisation of diagrammatic sets in Gray-categories} \label{sec:diag_gray}

\begin{dfn}
Our next goal is to construct a functor $\fun{G}\colon \rdcpx \to \graycat$, different from the ``obvious'' one obtained by composing $\mol{}{-}^*\colon \rdcpx \to \omegacat$ with $\coskel{3}{}\colon \omegacat \to \ncat{3}$ and then including $\ncat{3}$ in $\graycat$. In particular, $\fun{G}P$ will in general have non-trivial interchangers, so it will not be a strict 3\nbd category.

Every regular directed complex is the colimit of the diagram of inclusions of its atoms \cite[Corollary 1.34]{hadzihasanovic2020diagrammatic}. We impose that $\fun{G}$ preserve these colimit diagrams. Then it suffices to define $\fun{G}$ on atoms of increasing dimension. For each $n \in \mathbb{N} + \{-1\}$, let $\atom_n$ be the full subcategory of $\atom$ on the atoms of dimension $\leq n$. 
\end{dfn}

\begin{dfn}[{$\fun{G}$ in dimension $\leq 2$}]
On regular atoms of dimension $\leq 2$, we define $\fun{G}$ to be $\mol{}{-}\colon \atom_2 \to \ncat{3}$ followed by the embedding $\ncat{3} \incl \graycat$. We extend $\fun{G}$ along colimits to all regular directed complexes of dimension $\leq 2$.
\end{dfn}

\begin{dfn} \label{comm:gray_presentation}
Let $P$ be a 2\nbd dimensional regular directed complex. Then $\fun{G}(\skel{1}{P})$ is equal to (the image under the embedding $\ncat{3} \incl \graycat$ of) $\mol{}{\skel{1}{P}}^*$ and has the structure of a 1\nbd (pre)polygraph with the 1\nbd atoms of $P$ as generators. Now, for all 2\nbd atoms $x \in P$,
\begin{equation*} 
\begin{tikzpicture}[baseline={([yshift=-.5ex]current bounding box.center)}]
	\node (0) at (-.5,1.5) {$\bord O^2$};
	\node (1) at (2.5,0) {$\mol{}{(\clos\{x\})}$};
	\node (2) at (-.5,0) {$\mol{}{(\bord x)}$};
	\node (3) at (2.5,1.5) {$O^2$};
	\draw[1cinc] (0) to (3);
	\draw[1c] (0) to (2);
	\draw[1cinc] (2) to (1);
	\draw[1c] (3) to node[auto,arlabel] {$\clos\{x\}$} (1);
	\draw[edge] (1.6,0.2) to (1.6,0.7) to (2.3,0.7);
\end{tikzpicture}
\end{equation*}
is a pushout both in $\omegaprecat$ and $\graycat$. By the dual of the pullback lemma, the pushout of the span
\begin{align*}
	\coprod_{\dmn{x} = n} \mol{}{(\bord x)} & \incl \coprod_{\dmn{x} = n} \mol{}{(\clos\{x\})}, \\ 
	\coprod_{\dmn{x} = n} \mol{}{(\bord x)} & \incl \mol{}{\skel{1}{P}}^*
\end{align*}
in $\omegaprecat$ determines a 2\nbd prepolygraph $(\fun{G}P)_2$, while in $\graycat$ it is equivalent to the construction of $\fun{G}P$. The results of \cite[Section 1.6]{forest2018coherence} imply that 
\begin{enumerate}
	\item $\fun{G}P$ is obtained from $(\fun{G}P)_2$ by freely attaching some 3\nbd cells (interchange generators) indexed by generating cells of $(\fun{G}P)_2$, and imposing some equations of 3\nbd cells, so in particular
	\item $(\fun{G}P)_2$ is the 2\nbd skeleton of $\fun{G}P$.
\end{enumerate}
In the terminology of Forest and Mimram, $P$ determines a \emph{presentation} of the 2\nbd precategory $(\fun{G}P)_2$, which can be completed to a \emph{Gray presentation} of $\fun{G}P$ by freely adding the necessary \emph{structural generators}. 
\end{dfn}

\begin{lem} \label{lem:2cells_in_gu}
Let $U$ be a regular 2\nbd molecule. There is a bijective correspondence between
\begin{enumerate}
	\item cells of rank 2 in $\fun{G}U$, and 
	\item 2\nbd molecules $V \subseteq U$ together with a 1\nbd order.
\end{enumerate}
\end{lem}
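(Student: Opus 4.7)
The plan is to work entirely within the 2-skeleton $(\fun{G}U)_2 = \skel{2}{\fun{G}U}$, which by \S\ref{comm:gray_presentation} is the 2-prepolygraph freely generated by the 1-atoms of $U$ as 1-cells and the 2-atoms of $U$ as generating rank-2 cells, with boundaries inherited from $U$. Since the passage from $(\fun{G}U)_2$ to $\fun{G}U$ only adds interchange generators and relations in rank 3, cells of rank $\leq 2$ in $\fun{G}U$ coincide with those of $(\fun{G}U)_2$, so it suffices to establish the bijection for the latter.

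For the forward direction, I would first establish a normal form: every rank-2 cell $c$ in the free 2-prepolygraph $(\fun{G}U)_2$ can be written as
\[
	c = (w_1^- \cp{0} x_1 \cp{0} w_1^+) \cp{1} \ldots \cp{1} (w_m^- \cp{0} x_m \cp{0} w_m^+),
\]
where each $x_i$ is a 2-atom of $U$ and each $w_i^\pm$ is a 1-cell. This is a standard decomposition for 2-prepolygraphs: because the $\cp{0}$-composition of two rank-2 cells is undefined by the rank condition, any rank-2 cell not built using $\cp{1}$ must be a single whiskered generator, and induction on $\cp{1}$-decomposition yields the form above. Given such a decomposition, set $V_i \eqdef w_i^- \cp{0} \clos\{x_i\} \cp{0} w_i^+$ and $V(c) \eqdef V_1 \cp{1} \ldots \cp{1} V_m$; composability in $\fun{G}U$ forces $\bord{1}{+}V_i = \bord{1}{-}V_{i+1}$, so $V(c)$ is a 2-molecule. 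Its 2-atoms are exactly $x_1, \ldots, x_m$, and any path $x_i \to w \to x_j$ in $\maxd{1}{V(c)}$ forces $w \in \bord{1}{+}V_i \cap \bord{1}{-}V_j$ and hence $i \leq j$, so $(x_i)$ is a 1-order on $V(c)$.

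For the reverse map, given a 2-molecule $V \subseteq U$ and a 1-order $(x_1, \ldots, x_m)$, Corollary \ref{cor:dim2_acyclic} ensures $V$ is frame-acyclic with $\frdmn{V} \leq 1$, so Lemma \ref{lem:frame_decomposition} produces a decomposition $V = V_1 \cp{1} \ldots \cp{1} V_m$ in which each $V_i$ contains $x_i$ as its unique 2-atom. Each $V_i$ then admits a unique expression $w_i^- \cp{0} \clos\{x_i\} \cp{0} w_i^+$ for sub-1-molecules $w_i^\pm \subseteq U$, so plugging these back into the formula above yields a cell $c(V, (x_i))$ in $(\fun{G}U)_2$.

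The main obstacle is proving that these two constructions are mutually inverse, which reduces to the \emph{uniqueness} of the above normal form. The equational theory of the free 2-precategory is generated by associativity and unitality of $\cp{0}$ and $\cp{1}$, together with the functoriality relations $(a \cp{0} \alpha) \cp{1} (a \cp{0} \beta) = a \cp{0} (\alpha \cp{1} \beta)$ and its mirror; none of these ever transposes two distinct generators $x_i$ and $x_{i+1}$, so the sequence is invariant under the equivalence. The potential rearrangement that \emph{would} identify different 1-orders on the same 2-molecule---the interchange equation (\ref{eq:preinterchange})---is exactly what distinguishes strict 3-categories from Gray-categories, and is absent in a 2-precategory; this is also the content of Forest and Mimram's framework, where normal forms for 2-precategories are encoded directly as sequences of whiskered generators. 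A rigorous argument can then proceed either by orienting these equations into a confluent and terminating rewriting system, or by exhibiting the normal form as a section of an explicit evaluation $(\fun{G}U)_2 \to (\text{sequences of whiskered generators modulo trivial moves})$.
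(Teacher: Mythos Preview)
Your proposal is correct and follows essentially the same approach as the paper: both reduce to the 2-prepolygraph $(\fun{G}U)_2$ via \S\ref{comm:gray_presentation}, use Lemma \ref{lem:frame_decomposition} to pass from a 2-molecule with a 1-order to a cell via the decomposition $V = V_1 \cp{1} \ldots \cp{1} V_m$, and invoke the unique normal form for cells in a free 2-precategory for the other direction. The only substantive difference is that the paper cites \cite[Proposition 2]{forest2018coherence} directly for the uniqueness of this normal form, whereas you sketch the argument yourself; the paper also makes explicit the use of Remark \ref{rmk:2molecule_loopfree} ($\mol{}{U}^* = \mol{}{U}$) to ensure the normal-form expression determines a genuine 2-molecule in $U$, which in your version is folded into the claim that composability in $\fun{G}U$ forces $\bord{1}{+}V_i = \bord{1}{-}V_{i+1}$.
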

\begin{proof}
By the discussion in \S\ref{comm:gray_presentation}, the 2\nbd cells in $\fun{G}U$ are the same as the 2\nbd cells in the 2\nbd prepolygraph $(\fun{G}U)_2$, so they are freely generated by the atoms of $U$ under principal compositions (\S \ref{dfn:principal}), subject to the axioms of $\omega$\nbd precategories. 

Let $V$ be a 2\nbd molecule with a 1\nbd order $(x_1, \ldots, x_m)$. By Lemma \ref{lem:frame_decomposition}, we obtain a decomposition
\begin{equation} \label{eq:2decomposition}
	V = V_1 \cp{1} \ldots \cp{1} V_m.
\end{equation} 
Now each $V_i$ has frame dimension 0 or -1, so it has a (clearly unique) decomposition 
\begin{equation} \label{eq:1decomposition}
	\clos\{y_{i,1}\} \cp{0} \ldots \cp{0} \clos\{y_{i,k}\} \cp{0} \clos\{x_i\} \cp{0} \clos\{y_{i,k+1}\} \cp{0} \ldots \cp{0} \clos\{y_{i,p}\}
\end{equation}
where $\dmn{y_{i,j}} = 1$ for all $j \in \{1,\ldots,p\}$. Replacing the (\ref{eq:1decomposition}) into (\ref{eq:2decomposition}), we obtain a decomposition of $U$ into atoms using only principal compositions, which determines a cell of rank 2 in $(\fun{G}U)_2$.

Conversely, by \cite[Proposition 2]{forest2018coherence}, every cell $y$ of rank 2 in $(\fun{G}U)_2$ has a unique expression of the form $y_1 \cp{1} \ldots \cp{1} y_m$ where $y_i$ is an expression of the form (\ref{eq:1decomposition}). Now the expression of $y$ is also a valid expression for a 2\nbd cell in $\mol{}{U}^*$, which by Remark \ref{rmk:2molecule_loopfree} is equal to $\mol{}{U}$, so it determines a 2\nbd molecule $V \subseteq U$ together with a decomposition into atoms. From this decomposition we recover uniquely a 1\nbd order $(x_1, \ldots, x_m)$ on $V$. The two constructions are clearly inverse to each other.
\end{proof}

\begin{rmk}
By Lemma \ref{lem:2cells_in_gu}, every cell of rank 2 in $\fun{G}U$ is identified uniquely by a pair $(V, (x_i)_{i=1}^m)$ of a 2\nbd molecule and a 1\nbd order. 

More in general, if $P$ is a 2\nbd dimensional regular directed complex, a pair $(V, (x_i)_{i=1}^m)$ of a 2\nbd molecule in $P$ and a 1\nbd order on it determines a unique cell of rank 2 in $\fun{G}P$, although these may not exhaust all cells of rank 2 when $P$ is not totally loop-free. 
\end{rmk}

\begin{prop} \label{prop:gu_coherence}
Let $U$ and $V \subseteq U$ be regular 2\nbd molecules and let $(x_1, \ldots, x_m)$ and $(x'_1, \ldots, x'_m)$ be two 1\nbd orders on $V$. Then in $\fun{G}U$ there is a unique 3\nbd cell from $(V, (x_i)_{i=1}^m)$ to $(V, (x'_i)_{i=1}^m)$.
\end{prop}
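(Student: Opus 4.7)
The plan is to treat both existence and uniqueness by passing to the combinatorics of linear extensions and interchangers. By Corollary \ref{cor:normal_order}, any 1-order on $V$ is a linear extension of the partial order $\preceq$ on the 2-dimensional elements of $V$ induced by paths in $\maxd{1}{V}$. Hence two 1-orders $(x_1,\ldots,x_m)$ and $(x'_1,\ldots,x'_m)$ differ by a $\preceq$-preserving permutation, and any such permutation can be realised by a sequence of adjacent transpositions, each swapping a pair $(x_i,x_{i+1})$ of $\preceq$-incomparable elements.

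For existence, I proceed by induction on the number of inversions between the two 1-orders. The base case is immediate. For the inductive step, I pick an adjacent pair $(x_i,x_{i+1})$ to swap. Using Lemma \ref{lem:frame_decomposition} to decompose $V$ as $V_1 \cp{1} \ldots \cp{1} V_m$, and the fact that $x_i,x_{i+1}$ are $\preceq$-incomparable, I rewrite the subword $V_i \cp{1} V_{i+1}$ as a diagram isomorphic to the source of an interchanger $\chi_{\clos\{x_i\},\clos\{x_{i+1}\}}$ whiskered with suitable units and with $\bord{}{-}V_i$ and $\bord{}{+}V_{i+1}$. The corresponding interchanger 3\nbd cell in $\fun{G}U$ provides the elementary transposition, and composing it with the 3\nbd cell produced by the inductive hypothesis gives a 3\nbd cell from $(V, (x_i)_{i=1}^m)$ to $(V, (x'_i)_{i=1}^m)$.

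For uniqueness, I use the Gray presentation of $\fun{G}U$ from \S\ref{comm:gray_presentation} together with the analysis of \cite[Section 1.6]{forest2018coherence}: any 3\nbd cell between two parallel cells of rank 2 in $\fun{G}U$ decomposes (non\nbd uniquely) as a composite of interchangers $\chi_{?,?}$ and their inverses, whiskered with 1\nbd and 2\nbd cells, modulo the Gray\nbd category axioms. In our setting each such elementary factor corresponds to swapping an adjacent $\preceq$-incomparable pair in the current 1-order, so any two 3\nbd cells $\phi,\phi'\colon (V,(x_i)) \celto (V,(x'_i))$ correspond to two words $w,w'$ in the adjacent-transposition generators $s_j$ of the symmetric group $S_m$, both representing the same permutation $\pi \in S_m$. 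By Matsumoto's theorem, $w$ and $w'$ are related by the Coxeter relations and by the involutions $s_j^2 = 1$. The verification that each of these relations is realised by an equality of 3\nbd cells in $\fun{G}U$ then forces $\phi = \phi'$:
\begin{itemize}
    \item $s_j^2 = 1$ comes directly from the invertibility axiom $\chi \cp{2} \invrs{\chi} = \eps{}{(-)}$;
    \item $s_j s_k = s_k s_j$ for $|j - k| \geq 2$ follows from applying the interchange law (\ref{eq:interchange}) to interchangers localised in disjoint regions of the underlying 2\nbd cell, together with the compatibility of $\chi$ with 1\nbd compositions and units;
    \item the braid relation $s_j s_{j+1} s_j = s_{j+1} s_j s_{j+1}$, corresponding to the two ways of reversing a triple of pairwise $\preceq$-incomparable atoms, is the non-trivial Yang\nbd Baxter\nbd type equality, which is forced by axiom \ref{ax:3interchange} combined with the compatibility of $\chi_{x\cp{1}x',y}$ and $\chi_{x,y\cp{1}y'}$ with 1\nbd compositions.
\end{itemize}

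The main obstacle will be the verification of the braid relation: it is the only Coxeter relation that is not an immediate consequence of naturality and the interchange law, and it requires an explicit computation expanding one of the two triple composites using the compatibility axioms for $\chi$ and then matching it with the other. Since the two sides of Matsumoto's relations hold in any Gray\nbd category, and our 3\nbd cells arise purely from structural generators with no non-structural 3-generators (as $\dmn{U} = 2$), this verification closes the argument and yields uniqueness.
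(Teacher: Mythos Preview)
Your plan is sound and would succeed, but it is a genuinely different route from the paper's.

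The paper does not verify the Coxeter relations by hand. Instead, it defines a weight $\fun{w}((x_i)_{i=1}^m)$ counting pairs $(i,j)$ with $i<j$ but $x_j \prec x_i$ in the total order $\preceq$ on $U$ from Proposition~\ref{prop:dim2_loopfree}, observes that every non-trivial interchanger strictly increases $\fun{w}$, and concludes that the Gray presentation of $\fun{G}U$ is terminating. Since this presentation has no non-structural 3-generators, it is automatically locally confluent, and uniqueness follows from Forest--Mimram's coherence theorem \cite[Theorem~11]{forest2018coherence}. Existence is then obtained by repeatedly applying inverse interchangers to reach the normal 1-order from each side and composing.

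The trade-off is clear. Your approach is self-contained and makes the underlying combinatorics (linear extensions of $\preceq$, permutohedron 1-skeleton, Coxeter presentation of $S_m$) completely explicit; it also avoids the detour through the normal 1-order. The cost is that you must verify the braid relation for interchangers in $\fun{G}U$ from the Gray-category axioms, which you correctly flag as the main obstacle. This Yang--Baxter computation is exactly what Forest--Mimram's local-confluence analysis already packages, so the paper gets it for free by citation. Two minor points: what you invoke is really the Coxeter presentation of $S_m$ rather than Matsumoto's theorem (which concerns only reduced words), and your ``$s_j^2=1$'' is realised in $\fun{G}U$ as $\chi \cp{2} \invrs{\chi}$ rather than $\chi \cp{2} \chi$, since traversing the same edge twice in the graph of 1-orders uses the interchanger and then its inverse.
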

\begin{proof}
For each 1\nbd order $(x_1, \ldots, x_m)$ on $V$, let $\fun{w}((x_i)_{i=1}^m)$ be equal to the number of pairs $(i,j)$ such that $i < j$ but $x_j \prec x_i$ in the total order on $U$. Then
\begin{itemize}
	\item $0 \leq \fun{w}((x_i)_{i=1}^m) \leq {m \choose 2}$, 
	\item $\fun{w}((x_i)_{i=1}^m) = 0$ if and only if $(x_i)_{i=1}^m$ is the normal 1\nbd order, and
	\item for all non-trivial interchangers $\chi_{x,y}\colon (V, (x_i)_{i=1}^m) \celto (V, (x'_i)_{i=1}^m)$, we have $\fun{w}((x_i)_{i=1}^m) < \fun{w}((x'_i)_{i=1}^m)$.
\end{itemize}
It follows that $\fun{w}(-)$ induces a \emph{termination order} on the 2\nbd cells of $\fun{G}U$ in the terminology of \cite[Section 2.2]{forest2018coherence}. Because the Gray presentation of $\fun{G}U$ determined by $U$ as in \S\ref{comm:gray_presentation} has no non-structural 3\nbd generators, it is always locally confluent, so [Theorem 11, \emph{ibid.}] applies and $\fun{G}U$ has at most one 3\nbd cell between any parallel pair of 2\nbd cells. This proves uniqueness. 

For existence, it suffices to observe that, if $\fun{w}((x_i)_{i=1}^m) > 0$, then there is a non-trivial inverse interchanger with input $(V, (x_i)_{i=1}^m)$; we leave the proof as an exercise. Applying inverse interchangers repeatedly, we obtain invertible 3\nbd cells of type
\begin{equation*}
	(V, (x_i)_{i=1}^m) \celto (V, \text{normal 1\nbd order}), \quad (V, (x'_i)_{i=1}^m) \celto (V, \text{normal 1\nbd order}).
\end{equation*}
Composing the first with the inverse of the second produces a 3\nbd cell of type $(V, (x_i)_{i=1}^m) \celto (V, (x'_i)_{i=1}^m)$.
\end{proof}

\begin{dfn}[{$\fun{G}$ in dimension 3}]
Let $U$ be a regular 3\nbd atom. We define $\fun{G}U$ to be the pushout
\begin{equation*} 
\begin{tikzpicture}[baseline={([yshift=-.5ex]current bounding box.center)}]
	\node (0) at (-.5,1.5) {$\bord O^3$};
	\node (1) at (2.5,0) {$\fun{G}U$};
	\node (2) at (-.5,0) {$\fun{G}(\bord U)$};
	\node (3) at (2.5,1.5) {$O^3$};
	\draw[1cinc] (0) to (3);
	\draw[1c] (0) to node[auto,swap,arlabel] {$f$} (2);
	\draw[1cinc] (2) to (1);
	\draw[1c] (3) to (1);
	\draw[edge] (1.6,0.2) to (1.6,0.7) to (2.3,0.7);
\end{tikzpicture}
\end{equation*}
in $\graycat$, where $f$ sends $\undl{2}^\alpha$ to $(\bord{}{\alpha}U, \text{normal $1$\nbd order})$ for each $\alpha \in \{+,-\}$. 

Now every map $f\colon U \to V$ in $\atom_3$ determines an assignment of generators of $\fun{G}V$ to generators of $\fun{G}U$ which is compatible with boundaries, hence extends uniquely to a functor $\fun{G}f\colon \fun{G}U \to \fun{G}V$. This defines $\fun{G}\colon \atom_3 \to \graycat$. We extend $\fun{G}$ along colimits to all regular directed complexes of dimension $\leq 3$.
\end{dfn}

\begin{dfn}
By construction, if $P$ is a regular directed complex of dimension 3, we can associate to each 3\nbd atom $U$ of $P$ a 3\nbd cell
\begin{equation*}
	\intp{U}\colon (\bord{}{-}U, \text{normal 1\nbd order}) \celto (\bord{}{+}U, \text{normal 1\nbd order})
\end{equation*}
in $\fun{G}P$. We want to extend this assignment to all 3\nbd molecules in $P$.
\end{dfn}

\begin{dfn} \label{dfn:interpret_single_atom}
Suppose that $U$ contains a single 3\nbd dimensional element $x$. Then $\bord{}{\alpha}x \submol \bord{}{\alpha}U$ for all $\alpha \in \{+,-\}$, the substitution $\bord{}{\alpha}U[\compos{\bord{}{\alpha}x}/\bord{}{\alpha}x]$ is well-defined, and 
\begin{equation*}
	\bord{}{-}U[\compos{\bord{}{-}x}/\bord{}{-}x] = \bord{}{+}U[\compos{\bord{}{+}x}/\bord{}{+}x].
\end{equation*} 
Pick a 1\nbd order $(x_i)_{i=1}^m$ on $\bord{}{\alpha}U[\compos{\bord{}{\alpha}x}/\bord{}{\alpha}x]$; then $\clos\{x_k\} = \compos{\bord{}{\alpha}x}$ for a unique $k \in \{1,\ldots,m\}$. Let $y_1,\ldots,y_p$ be the normal 1\nbd order on $\bord{}{-}x$ and let $z_1,\ldots,z_q$ be the normal 1\nbd order on $\bord{}{+}x$. Then
\begin{align*}
	(x^-_i)_{i=1}^{m+p-1} & \eqdef (x_1,\ldots,x_{k-1}, y_1,\ldots,y_p, x_{k+1},\ldots,x_m), \\
	(x^+_i)_{i=1}^{m+q-1} & \eqdef (x_1,\ldots,x_{k-1}, z_1,\ldots,z_q, x_{k+1},\ldots,x_m)
\end{align*}
are 1\nbd orders on $\bord{}{-}U$ and $\bord{}{+}U$, respectively.

Now substituting $\intp{\clos\{x\}}$ for $\clos\{x_k\}$ in the decomposition of $\bord{}{\alpha}U[\compos{\bord{}{\alpha}x}/\bord{}{\alpha}x]$ corresponding to the 1\nbd order $(x_i)_{i=1}^m$ yields a valid expression for a 3\nbd cell
\begin{equation} \label{eq:3_cell_in_context}
	c[x]\colon (\bord{}{-}U, (x^-_i)_{i=1}^{m+p-1}) \celto (\bord{}{+}U, (x^+_i)_{i=1}^{m+q-1})
\end{equation}
in $\fun{G}P$. By Proposition \ref{prop:gu_coherence}, there are unique 3\nbd cells
\begin{align*}
	\chi^{-}\colon & (\bord{}{-}U, \text{normal 1\nbd order}) \celto (\bord{}{-}U, (x^-_i)_{i=1}^{m+p-1}), \\
	\chi^{+} \colon & (\bord{}{+}U, (x^+_i)_{i=1}^{m+q-1}) \celto (\bord{}{+}U, \text{normal 1\nbd order})
\end{align*}
obtained as composites of interchangers and inverse interchangers, respectively. We define $\intp{U}$ to be the composite 
\begin{equation*}
	\chi^{-} \cp{2} c[x] \cp{2} \chi^+\colon (\bord{}{-}U, \text{normal 1\nbd order}) \celto (\bord{}{+}U, \text{normal 1\nbd order}).
\end{equation*}
We need to show that this is independent of our choice of 1\nbd order $(x_i)_{i=1}^m$. Suppose $(x'_i)_{i=1}^m$ is another 1\nbd order on $\bord{}{\alpha}U[\compos{\bord{}{\alpha}x}/\bord{}{\alpha}x]$, leading to a potentially different interpretation $\chi'^{-} \cp{2} c'[x] \cp{2} \chi'^+$. There are unique 3\nbd cells
\begin{align*}
	\psi^{-}\colon & (\bord{}{-}U, (x^-_i)_{i=1}^{m+p-1}) \celto (\bord{}{-}U, (x'^-_i)_{i=1}^{m+p-1}), \\
	\psi^{+}\colon & (\bord{}{+}U, (x^+_i)_{i=1}^{m+q-1}) \celto (\bord{}{+}U, (x'^+_i)_{i=1}^{m+q-1})
\end{align*}
obtained as composites of interchangers and inverse interchangers, and since they ``fix'' $\bord{}{-}x$ and $\bord{}{+}x$, by naturality of interchangers we have
\begin{equation*}
	\psi^- \cp{2} c'[x] = c[x] \cp{2} \psi^+
\end{equation*}
hence $c'[x] = \invrs{(\psi^-)} \cp{2} c[x] \cp{2} \psi^+$ and
\begin{equation*}
	\chi'^{-} \cp{2} c'[x] \cp{2} \chi'^+ = \chi'^{-} \cp{2} \invrs{(\psi^-)} \cp{2} c[x] \cp{2} \psi^+ \cp{2} \chi'^+.
\end{equation*}
Finally, by Proposition \ref{prop:gu_coherence}, $\chi'^{-} \cp{2} \invrs{(\psi^-)} = \chi^-$ and $\psi^+ \cp{2} \chi'^+ = \chi^+$. 
\end{dfn}

\begin{dfn} \label{dfn:interpret_general}
Let $U$ be any regular 3\nbd molecule in $P$ and fix a 2\nbd order $(x_1, \ldots, x_m)$ on $U$. Then Lemma \ref{lem:frame_decomposition} combined with Theorem \ref{thm:acyclic_3} gives a decomposition $U = V_1 \cp{2} \ldots \cp{2} V_m$ where $x_i$ is the only 3\nbd dimensional element of $V_i$ for each $i \in \{1,\ldots,m\}$. We let $\intp{U}$ in $\fun{G}P$ be the composite $\intp{V_1} \cp{2} \ldots \cp{2} \intp{V_m}$ of the 3\nbd cells
\begin{equation*}
	\intp{V_i}\colon (\bord{}{-}V_i, \text{normal 1\nbd order}) \celto (\bord{}{+}V_i, \text{normal 1\nbd order})
\end{equation*}
defined in \S \ref{dfn:interpret_single_atom}.

We need to show that this interpretation is independent of the 2\nbd order chosen on $U$. Observe that any pair of 2\nbd orders on $U$ is related by a sequence of elementary transpositions of consecutive elements that are not connected by a path in $\maxd{2}{U}$. Thus it suffices to show that if
\begin{equation*}
	W_1 \cp{2} W_2 = W'_1 \cp{2} W'_2
\end{equation*}
as 3\nbd molecules, where $x$ is the only 3\nbd dimensional element in $W_1$ and $W'_2$, while $y$ is the only 3\nbd dimensional element in $W_2$ and $W'_1$, then 
\begin{equation*}
	\intp{W_1} \cp{2} \intp{W_2} = \intp{W'_1} \cp{2} \intp{W'_2}.
\end{equation*}

The interpretation of $W_1$ involves a choice of 1\nbd order on $\bord{}{\alpha}W_1[\compos{\bord{}{\alpha}x}/\bord{}{\alpha}x]$ but it is independent of this choice. Now $\bord{}{-}y \submol \bord{}{-}W_2 = \bord{}{+}W_1$, and since $x$ and $y$ are not connected by a path in $\maxd{2}{U}$, necessarily 
\begin{equation*}
	\bord{}{+}x \cap \bord{}{-}y \subseteq \clos\{x\} \cap \clos\{y\} \subseteq \bord{1}{}x \cup \bord{1}{}y,
\end{equation*}
and by Proposition \ref{prop:sim_substitution} $\bord{}{-}y \submol \bord{}{+}W_1[\compos{\bord{}{+}x}/\bord{}{+}x]$. 

Applying the known equalities between the boundaries of $W_1, W_2, W'_1, W'_2$, we deduce that the double substitutions
\begin{align*}
	& \bord{}{+}W_1[\compos{\bord{}{+}x}/\bord{}{+}x][\compos{\bord{}{-}y}/\bord{}{-}y], \\
	& \bord{}{-}W_2[\compos{\bord{}{-}y}/\bord{}{-}y][\compos{\bord{}{+}x}/\bord{}{+}x], \\
	& \bord{}{+}W'_1[\compos{\bord{}{+}y}/\bord{}{+}y][\compos{\bord{}{-}x}/\bord{}{-}x], \\
	& \bord{}{-}W'_2[\compos{\bord{}{-}x}/\bord{}{-}x][\compos{\bord{}{+}y}/\bord{}{+}y]
\end{align*}
are all well-defined and equal to the same regular 2\nbd molecule. Fix a 1\nbd order $(z_i)_{i=1}^p$ on it. Then $\compos{\bord{}{\alpha}x} = \clos\{z_k\}$ and $\compos{\bord{}{\beta}y} = \clos\{z_\ell\}$ for a unique pair $k, \ell \in \{1,\ldots,p\}$. Now
\begin{itemize}
	\item to interpret $W_1$, choose the 1\nbd order on $\bord{}{+}W_1[\compos{\bord{}{+}x}/\bord{}{+}x]$ obtained by replacing $z_\ell$ with the normal 1\nbd order on $\bord{}{-}y$ in $(z_i)_{i=1}^p$,
	\item to interpret $W_2$, choose the 1\nbd order on $\bord{}{-}W_2[\compos{\bord{}{-}y}/\bord{}{-}y]$ obtained by replacing $z_k$ with the normal 1\nbd order on $\bord{}{+}x$ in $(z_i)_{i=1}^p$,
\end{itemize}
and similarly for $W'_1$ and $W'_2$. With the construction of \S \ref{dfn:interpret_single_atom}, any choices of 1\nbd orders lead to expressions
\begin{align*}
	& \chi_1^- \cp{2} c[x] \cp{2} \chi_1^+ \cp{2} \chi_2^- \cp{2} c[y] \cp{2} \chi_2^+, \\
	& \chi'^-_1 \cp{2} c'[y] \cp{2} \chi'^+_1 \cp{2} \chi'^-_2 \cp{2} c'[x] \cp{2} \chi'^+_2,
\end{align*}
and with the particular choice that we made, 
\begin{equation*}
	\chi_1^- = \chi'^-_1, \quad \quad \chi_2^+ = \chi'^+_2, 
\end{equation*}
while $\chi_1^+ \cp{2} \chi_2^-$ and $\chi'^+_1 \cp{2} \chi'^-_2$ are units, so they can be eliminated. Finally,
\begin{equation*}
	\chi_1^- \cp{2} c[x] \cp{2} c[y] \cp{2} \chi_2^+ = \chi_1^- \cp{2} c'[y] \cp{2} c'[x] \cp{2} \chi_2^+
\end{equation*}
is a consequence of axiom \ref{ax:3interchange} of Gray\nbd categories. This proves that $\intp{W_1} \cp{2} \intp{W_2}$ is equal to $\intp{W'_1} \cp{2} \intp{W'_2}$, and we conclude that $\intp{U}$ is independent of the choice of 2\nbd order.
\end{dfn}

\begin{exm}
Let $U$ be the shape of diagram (\ref{eq:frobenius_diagram}). We introduce names for some atoms of $U$ as follows:
\begin{equation*}
	\input{img/frobenius_exm}
\end{equation*}
The 3\nbd atoms $\varphi$ and $\psi$ are interpreted in $\fun{G}U$ as 3\nbd cells
\begin{align*}
	\intp{\varphi}\colon & (a \cp{0} z) \cp{1} (w \cp{0} c) \celto z' \cp{1} w', \\
	\intp{\psi}\colon & (x \cp{0} d) \cp{1} (b \cp{0} y) \celto x' \cp{1} y';
\end{align*}
notice that in this case both $\bord{}{\alpha}\varphi$ and $\bord{}{\alpha}\psi$ admit a single 1\nbd order, which is necessarily the normal 1\nbd order.

We pick the 2\nbd order $(\varphi, \psi)$ on $U$, which determines the decomposition 
\begin{equation*}
	U = V_1 \cp{2} V_2, \quad 
	V_1 \eqdef \varphi \cup \bord{}{-}\psi, \quad  V_2 \eqdef \psi \cup \bord{}{+}\varphi.
\end{equation*}
To interpret $V_1$ in $\fun{G}U$, first we need to consider $V_1[\compos{\bord{}{-}\varphi}/\bord{}{-}\varphi]$. This is the shape of the diagram
\begin{equation*}
	\input{img/frobenius_exm2}
\end{equation*}
on which we pick the 1\nbd order $(x, y, zw)$. The 3\nbd cell $c[\varphi]$ corresponding to this 1\nbd order, defined as in (\ref{eq:3_cell_in_context}), is
\begin{equation*}
	(a \cp{0} x \cp{0} d) \cp{1} (a \cp{0} b \cp{0} y) \cp{1} (\intp{\varphi} \cp{0} e)
\end{equation*}
of type $(\bord{}{-}V_1, (x, y, z, w)) \celto (\bord{}{+}V_1, (x, y, z', w'))$ in $\fun{G}U$.

The normal 1\nbd order on $\bord{}{-}V_1$ is in fact $(x,z,w,y)$. Applying a pair of interchangers to first move $w$ after $y$, then $z$ after $y$, we obtain a 3\nbd cell
\begin{equation*}
	\chi_1^-\colon (\bord{}{-}V_1, (x,z,w,y)) \celto (\bord{}{-}V_1, (x,y,z,w)).
\end{equation*}
Similarly, the normal 1\nbd order on $\bord{}{+}V_1$ is $(x,z',w',y)$, so we apply a pair of inverse interchangers to move $y$ after $z'$, then $y$ after $w'$, producing a 3\nbd cell
\begin{equation*}
	\chi_1^+\colon (\bord{}{+}V_1, (x,y,z',w')) \celto (\bord{}{+}V_1, (x,z',w',y)).
\end{equation*}
Then $\intp{V_1}$ is defined to be $\chi_1^- \cp{2} c[\varphi] \cp{2} \chi_1^+$.

Next, consider $V_2[\compos{\bord{}{-}\psi}/\bord{}{-}\psi]$. This is the shape of the diagram
\begin{equation*}
	\input{img/frobenius_exm3}
\end{equation*}
which admits only the 1\nbd order $(xy, z', w')$. Correspondingly, we construct the 3\nbd cell
\begin{equation*}
	c[\psi] \eqdef (a \cp{0} \intp{\psi}) \cp{1} (z' \cp{0} e) \cp{1} (w' \cp{0} e)
\end{equation*}
which is of type $(\bord{}{-}V_2, (x,y,z',w')) \celto (\bord{}{+}V_2, (x',y',z',w'))$. The normal 1\nbd order on $\bord{}{-}V_2$ is $(x, z', w', y)$, and we have a composite of interchangers
\begin{equation*}
	\chi_2^-\colon (\bord{}{-}V_2, (x,z',w',y)) \celto (\bord{}{-}V_2, (x,y,z',w')),
\end{equation*}
which is in fact the inverse of $\chi_1^+$. On the other hand, $(x',y',z',w')$ is already the normal 1\nbd order on $\bord{}{+}V_2$, so  $\intp{V_2}$ is just $\chi_2^- \cp{2} c[\psi]$. Overall, $\intp{U}$ is 
\begin{equation*}
	\chi_1^- \cp{2} c[\varphi] \cp{2} c[\psi]\colon (\bord{}{-}U, (x,z,w,y)) \celto (\bord{}{+}U, (x',y',z',w')).
\end{equation*}

If we had picked the 2\nbd order $(\psi, \varphi)$, we would have instead ended up with the expression $\chi_1^- \cp{2} c'[\psi] \cp{2} c'[\varphi]$ where
\begin{align*}
	c'[\psi] & \eqdef (a \cp{0} \intp{\psi}) \cp{1} (a \cp{0} z \cp{0} e) \cp{1} (w \cp{0} c \cp{0} e), \\
	c'[\varphi] & \eqdef (a \cp{0} x') \cp{1} (a \cp{0} y') \cp{1} (\intp{\varphi} \cp{0} e).
\end{align*}
It follows from axiom \ref{ax:3interchange} of Gray\nbd categories that $c[\varphi] \cp{2} c[\psi] = c'[\psi] \cp{2} c'[\varphi]$, confirming that $\intp{U}$ is independent of the 2\nbd order on $U$.
\end{exm}

\begin{dfn}[{$\fun{G}$ in dimension $\geq 4$}]
Let $U$ be a regular 4\nbd atom. We define $\fun{G}U$ to be the quotient of $\fun{G}(\bord U)$ by the equation
\begin{equation*}
	\intp{\bord{}{-}U} = \intp{\bord{}{+}U},
\end{equation*}
where the 3\nbd molecules $\bord{}{\alpha}U$ are interpreted in $\fun{G}(\bord U)$ as by \S \ref{dfn:interpret_general}. 

If $f\colon U \to V$ is a map in $\atom_4$, its restriction to $\bord U$ determines a functor $\fun{G}(\bord f)\colon \fun{G}(\bord U) \to \fun{G}V$. If $U$ is a 4\nbd atom, then either $\dmn{f(U)} < 4$ and $f(\bord{}{-}U) = f(\bord{}{+}U)$, or $\dmn{f(U)} = 4$, $f(U) = V$ and $f(\bord{}{\alpha}U) = \bord{}{\alpha}V$ for each $\alpha \in \{+,-\}$. In either case, $\fun{G}(\bord f)$ is compatible with the equation $\intp{\bord{}{-}U} = \intp{\bord{}{+}U}$, so it factors uniquely through a functor $\fun{G}f\colon \fun{G}U \to \fun{G}V$. 

This defines $\fun{G}\colon \atom_4 \to \graycat$, and we extend it along colimits to all regular directed complexes of dimension $\leq 4$. 

Finally, if $f\colon P \to Q$ is a map of regular directed complexes of arbitrary dimension, it restricts to a map $\skel{4}{f}\colon \skel{4}{P} \to \skel{4}{Q}$, and we let $\fun{G}{f}$ be equal to $\fun{G}{(\skel{4}{f})}$. This defines $\fun{G}\colon \rdcpx \to \graycat$. 
\end{dfn}

\begin{comm}
By construction, $\fun{G}$ ignores any elements of dimension $> 4$. The idea is that, while 4\nbd atoms can contribute non-trivial equations of 3\nbd cells in a Gray\nbd category, higher\nbd dimensional atoms can only contribute trivial ``equations of equations'' with no visible effect.
\end{comm}

\begin{dfn}
Because $\graycat$ has all small colimits, we are in the conditions of \cite[Corollary 6.2.6]{riehl2017category} and we can define a functor
\begin{equation*}
	\fun{G}\colon \dgmset \to \graycat
\end{equation*}
as the left Kan extension of $\fun{G}\colon \rdcpx \to \graycat$ along the embedding $\rdcpx \incl \dgmset$. 
\end{dfn}

\begin{rmk}
Since we made sure at every step that $\fun{G}$ preserve the colimits in $\rdcpx$, this is in fact equal to the left Kan extension of the restriction of $\fun{G}$ to $\atom$ along the Yoneda embedding. 
\end{rmk}

\begin{rmk}
For the usual reasons, $\fun{G}$ has a right adjoint, of which we will not make use. Unlike the diagrammatic nerve of pros, it is not full; see \cite[Remark 7.20]{hadzihasanovic2020diagrammatic} for a counterexample that also applies to the present case.
\end{rmk}

\begin{comm}
The following (generally non-commutative) diagram of functors recaps the adjunctions that we have established:
	\begin{equation*} 
		\begin{tikzpicture}[baseline={([yshift=-.5ex]current bounding box.center)}]
		\node (1t) at (0,2) {$\dgmset$};
		\node (1b) at (0,0) {$\bipro$};
		\node (2b) at (4,0) {$\pro$};
		\node (3b) at (8,0) {$\prob$};
		\node (4b) at (11,0) {$\propp$.};
		\node (2t) at (4,2) {$\graycat$};
		\node (3t) at (8,2) {$\brmoncat$};
		\draw[1c,out=15,in=165] (1t.east |- 0,2.15) to node[auto,arlabel] {$\fun{G}$} (2t.west |- 0,2.15);
		\node at (1.8,2) {$\bot$};
		\draw[1c,out=-165,in=-15] (2t.west |- 0,1.85) to (1t.east |- 0,1.85);
		\draw[1c,out=20,in=160] (2t.east |- 0,2.15) to (3t.west |- 0,2.15);
		\node at (5.8,2) {$\bot$};
		\draw[1c,out=-160,in=-20] (3t.west |- 0,1.85) to node[auto,arlabel] {$\fun{B}$} (2t.east |- 0,1.85);
		\draw[1c,out=15,in=165] (1b.east |- 0,.15) to (2b.west |- 0,.15);
		\node at (2.05,0) {$\bot$};
		\draw[1cincl,out=-165,in=-15] (2b.west |- 0,-.15) to (1b.east |- 0,-.15);
		\draw[1c,out=-120,in=120] (1t) to node[auto,arlabel,swap] {$\fun{P}$} (1b);
		\node at (0,1) {$\dashv$};
		\draw[1c,out=60,in=-60] (1b) to node[auto,arlabel,swap] {$\fun{N}$} (1t);
		\draw[1c,out=15,in=165] (2b.east |- 0,.15) to node[auto,arlabel] {$\fun{F}$} (3b.west |- 0,.15);
		\node at (5.95,0) {$\bot$};
		\draw[1c,out=-165,in=-15] (3b.west |- 0,-.15) to node[auto,arlabel] {$\fun{U}$} (2b.east |- 0,-.15);
		\draw[1c,out=20,in=160] (3b.east |- 0,.15) to node[auto,arlabel] {$\fun{r}$} (4b.west |- 0,.15);
		\node at (9.5,0) {$\bot$};
		\draw[1cincl,out=-160,in=-20] (4b.west |- 0,-.15) to (3b.east |- 0,-.15);
		\draw[1c,out=120,in=-120] (3b) to node[auto,arlabel] {$\fun{U}_2$} (3t);
		\node at (8,1) {$\dashv$};
		\draw[1c,out=-60,in=60] (3t) to (3b);
		\draw[1c, out=135,in=-45] (3b) to node[auto,arlabel,pos=.8] {$\fun{U}_3$} (2t);
		\end{tikzpicture}
	\end{equation*}

\end{comm}

\section{The smash product} \label{sec:smash}

\subsection{The tensor product of pros} \label{sec:tensor_pros}

We reconstruct the tensor product of props, as defined by Hackney and Robertson, as a reflection of an ``external'' tensor product of pros producing a prob, whose combinatorics are only slightly more involved.

\begin{lem} \label{lem:normal_permutation}
Let $s$ be a permutation on the set $\{1,\ldots,n\}$. Then $s$ is either the identity or admits a unique decomposition
\begin{equation*}
	s = s_1;\ldots;s_p
\end{equation*}
with the following properties. For each $i \in \{1,\ldots,p\}$, let $s^{(i)} \eqdef s_i;\ldots;s_p$. Then
\begin{enumerate}
	\item $s_i$ is an elementary transposition $(k \; k+1)$ of two consecutive elements, and
	\item $k$ is the least element of $\{1,\ldots,n\}$ such that $s^{(i)}(k+1) < s^{(i)}(k)$.
\end{enumerate}
\end{lem}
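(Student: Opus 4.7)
The plan is to prove the statement by induction on the number of inversions of $s$, via a greedy algorithm that at each step extracts the leftmost inversion of the current permutation. Let $\mathrm{inv}(s) \eqdef |\{(i,j) \mid 1 \leq i < j \leq n,\; s(i) > s(j)\}|$. Both existence and uniqueness will be proved simultaneously by induction on $\mathrm{inv}(s)$. The base case $\mathrm{inv}(s) = 0$ corresponds to $s = \mathrm{id}$, and here $p = 0$ (the empty decomposition) is forced: if $p \geq 1$, condition 2 would require some $k$ with $s^{(1)}(k+1) < s^{(1)}(k)$, which is vacuous when $s = \mathrm{id}$.

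For the inductive step, suppose $s \neq \mathrm{id}$, and let $k$ be the least element such that $s(k+1) < s(k)$; this exists because $\mathrm{inv}(s) > 0$. For existence, set $s_1 \eqdef (k\;k+1)$ and let $s'$ be the permutation obtained from $s$ by swapping its values at positions $k$ and $k+1$, so that $s = s_1;s'$ in the diagrammatic convention (since $s'(j) = s(s_1(j))$ gives $s(x) = s'(s_1(x))$). A routine case analysis on position pairs establishes $\mathrm{inv}(s') = \mathrm{inv}(s) - 1$: the inversion $(k,k+1)$ is eliminated, while for every other position pair the status of being an inversion is preserved, as swapping two adjacent values only redistributes the pairs $(j,k),(j,k+1)$ for $j < k$ (resp.\ $(k,j),(k+1,j)$ for $j > k+1$) without changing their joint count. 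The inductive hypothesis then provides a decomposition $s' = s_2;\ldots;s_p$ satisfying the two conditions; concatenating yields the required $s = s_1;s_2;\ldots;s_p$, with $s^{(i)} = s_i;\ldots;s_p$ for $i \geq 2$ inherited from $s'$, and the condition on $s_1$ holding by the choice of $k$.

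For uniqueness, suppose $s = s_1;\ldots;s_p = t_1;\ldots;t_q$ are two decompositions satisfying the conditions. Condition 2 applied to $i = 1$ determines both $s_1$ and $t_1$ uniquely as $(k\;k+1)$ with $k$ the least inversion position of $s = s^{(1)} = t^{(1)}$, so $s_1 = t_1$. Cancelling leaves $s_2;\ldots;s_p = t_2;\ldots;t_q$, a decomposition of the permutation $s' = s_1;s$, which has strictly fewer inversions; the inductive hypothesis then gives termwise agreement of the tails, and in particular $p = q$.

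The only point requiring an actual calculation is the equality $\mathrm{inv}(s') = \mathrm{inv}(s) - 1$ when $s_1$ swaps an adjacent inverted pair, and this is a minor case analysis; the argument presents no genuine obstacle.
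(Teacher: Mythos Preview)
Your proof is correct and follows essentially the same approach as the paper: both extract the leftmost adjacent inversion at each step and argue termination via the inversion count, with uniqueness forced because condition 2 determines $s_1$ from $s^{(1)} = s$ and then recursively. The paper phrases it as an algorithm whose termination follows from the strict decrease of inversions, while you phrase it as an induction on $\mathrm{inv}(s)$; these are the same argument.
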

\begin{proof}
We construct, step by step, decompositions $s = s_1;\ldots;s_{i-1};s^{(i)}$. For $i = 1$, we let $s = s^{(1)}$ trivially. For each $i \geq 1$, if $s^{(i)}$ is the identity, we let $p \eqdef i-1$ and we stop. 

Otherwise, there exists a least $k$ such that $s^{(i)}(k+1) < s^{(i)}(k)$. We let $s_i \eqdef (k \; k+1)$ and $s^{(i+1)} \eqdef \invrs{s_i}; s^{(i)}$. Then $s = s_1;\ldots;s_i;s^{(i+1)}$. 

At each step, the number of pairs $j,j' \in \{1,\ldots,n\}$ such that $j < j'$ but $s^{(i)}(j') < s^{(i)}(j)$ strictly decreases, and it is equal to 0 if and only if $s^{(i)}$ is the identity. It follows that the algorithm terminates after a finite number of steps, producing a decomposition with the desired properties. 

Uniqueness is clear, since the conditions determine the factor $s_i$ uniquely at each step.
\end{proof}

\begin{dfn} \label{dfn:permutation}
Let $s$ be a permutation on the set $\{1,\ldots, n\}$. For all 1\nbd cells $(a_1, \ldots, a_n)$ in a prob $(T, \gen{T})$, we define an invertible 2\nbd cell
\begin{equation*}
	\sigma(s)\colon (a_1, \ldots, a_n) \celto (a_{s(1)}, \ldots, a_{s(n)})
\end{equation*}
in $T$; the dependence of $\sigma(s)$ on $(a_1,\ldots,a_n)$ is left implicit.
\begin{itemize}
	\item If $s$ is the identity, we let $\sigma(s)$ be the unit on $(a_1,\ldots,a_n)$.
	\item If $s$ is an elementary transposition $(k \; k+1)$ of two consecutive elements, we let 
	\begin{equation*}
		\sigma(s) \eqdef a_1 \cp{0} \ldots \cp{0} a_{k-1} \cp{0} \sigma_{a_k,a_{k+1}} \cp{0} a_{k+2} \ldots \cp{0} a_n.
	\end{equation*}
	\item In general, if $s = s_1;\ldots;s_p$ is the decomposition of $s$ given by Lemma \ref{lem:normal_permutation}, we let
	\begin{equation*}
		\sigma(s) \eqdef \sigma(s_1) \cp{1} \ldots \cp{1} \sigma(s_p).
	\end{equation*}
\end{itemize}
We also define a second invertible 2\nbd cell 
\begin{equation*}
	\sigma^*(s)\colon (a_1, \ldots, a_n) \celto (a_{s(1)}, \ldots, a_{s(n)})
\end{equation*}
by $\sigma^*(s) \eqdef \invrs{(\sigma(\invrs{s}))}$.
\end{dfn}

\begin{rmk}
If $(T, \gen{T})$ is a prop, then $\sigma(s) = \sigma^*(s)$ for all permutations $s$. 
\end{rmk}

\begin{exm}
Let $s$ be the permutation $(1,2,3,4,5) \mapsto (3,1,5,4,2)$. The decomposition of $s$ given by Lemma \ref{lem:normal_permutation} is
\begin{equation*}
	s = (2\;3) ; (1\; 2) ; (3\;4) ; (4\;5) ; (3\;4).
\end{equation*}
We use the graphical notation
\begin{equation*}
	\input{img/braiding}
\end{equation*}
for the braiding $\sigma_{a,b}$ and the inverse braiding $\sigma^*_{a,b}$, respectively, in a prob. The 2\nbd cells $\sigma(s)$ and $\sigma^*(s)$ of type $(a_1,a_2,a_3,a_4,a_5) \celto (a_3,a_1,a_5,a_4,a_2)$ can be pictured as
\begin{equation*}
	\input{img/braiding_permutation}
\end{equation*}
respectively. In a prop, these are identical and may both be pictured as their ``shadow''
\begin{equation*}
	\input{img/symmetric_permutation}
\end{equation*}
\end{exm}

\begin{dfn}
Let $(T, \gen{T})$ be a prob and let $\{a_{i,j} \mid 1\leq i\leq n, 1 \leq j \leq m\}$ be a doubly indexed collection of 1\nbd cells in $\gen{T}$. We denote by $((a_{i,j})_{i=1}^n)_{j=1}^m$ the 1\nbd cell
\begin{equation} \label{eq:first_ordering}
	(a_{1,1},\ldots,a_{n,1},a_{1,2},\ldots,a_{n,2},\ldots\ldots,a_{1,m},\ldots,a_{n,m})
\end{equation}
and by $((a_{i,j})_{j=1}^m)_{i=1}^n$ the 1\nbd cell
\begin{equation} \label{eq:second_ordering}
	(a_{1,1},\ldots,a_{1,m},a_{2,1},\ldots,a_{2,m},\ldots\ldots,a_{n,1},\ldots,a_{n,m}).
\end{equation}
We let 
\begin{align*}
	\sigma & \colon ((a_{i,j})_{j=1}^m)_{i=1}^n \celto ((a_{i,j})_{i=1}^n)_{j=1}^m, \\
	\sigma^* & \colon ((a_{i,j})_{i=1}^n)_{j=1}^m \celto ((a_{i,j})_{j=1}^m)_{i=1}^n
\end{align*}
be equal to $\sigma(\invrs{s})$ and its inverse $\sigma^*(s)$, respectively, for the permutation $s$ implied by the reordering of (\ref{eq:first_ordering}) into (\ref{eq:second_ordering}).
\end{dfn}

\begin{exm}
The 2\nbd cells
\begin{align*}
	\sigma & \colon (a_{1,1}, a_{1,2}, a_{1,3}, a_{2,1}, a_{2,2}, a_{2,3}) \celto (a_{1,1}, a_{2,1}, a_{1,2}, a_{2,2}, a_{1,3}, a_{2,3}), \\
	\sigma^* & \colon (a_{1,1}, a_{2,1}, a_{1,2}, a_{2,2}, a_{1,3}, a_{2,3}) \celto (a_{1,1}, a_{1,2}, a_{1,3}, a_{2,1}, a_{2,2}, a_{2,3})
\end{align*}
can be pictured as
\begin{equation*}
	\input{img/doubly_indexed_permutation}
\end{equation*}
respectively.
\end{exm}

\begin{dfn}[Tensor product of pros] 
The \emph{tensor product} $(T, \gen{T}) \otimes (S, \gen{S})$ of two pros $(T, \gen{T})$ and $(S, \gen{S})$ is the prob $(T \otimes S, \gen{T} \otimes \gen{S})$ constructed as follows.

\begin{enumerate}
	\item Let $(\gen{T} \otimes \gen{S})_0 \eqdef \{\bullet\}$ and $(\gen{T} \otimes \gen{S})_1 \eqdef \{a \otimes c\colon \bullet \celto \bullet \mid a \in \gen{T}_1, c \in \gen{S}_1\}$. This determines $\skel{1}{(T \otimes S)}$ together with its 1\nbd polygraph structure, which makes it a pro.
	
	\item Construct the coproducts
	\begin{equation} \label{eq:indexed_coprod}
		\coprod_{c \in \gen{S}_1} (T, \gen{T}), \quad \quad \coprod_{a \in \gen{T}_1} (S, \gen{S})
	\end{equation}
	in $\pro$. Denote by 
	\begin{equation*}
		- \otimes d\colon (T, \gen{T}) \incl \coprod_{c \in \gen{S}_1} (T, \gen{T}), \quad \quad b \otimes -\colon (S, \gen{S}) \incl \coprod_{a \in \gen{T}_1} (S, \gen{S})
	\end{equation*}
	the inclusions into the $d$\nbd indexed and $b$\nbd indexed summand, respectively. There are morphisms
	\begin{equation*}
		\skel{1}{(T \otimes S)} \to \coprod_{c \in \gen{S}_1} (T, \gen{T}), \quad \quad 		\skel{1}{(T \otimes S)} \to \coprod_{a \in \gen{T}_1} (S, \gen{S})
	\end{equation*}
	uniquely determined by the ``tautologous'' assignments $a \otimes c \mapsto a \otimes c$. Construct the pushout 
	\begin{equation} \label{eq:pre_quotient_tensor}
		\begin{tikzpicture}[baseline={([yshift=-.5ex]current bounding box.center)}]
		\node (0) at (-1,1.5) {$\skel{1}{(T \otimes S)}$};
		\node (1) at (2.5,0) {$T \square S$};
		\node (2) at (-1,0) {$\coprod_{a \in \gen{T}_1} (S, \gen{S})$};
		\node (3) at (2.5,1.5) {$\coprod_{c \in \gen{S}_1} (T, \gen{T})$};
		\draw[1c] (0) to (3);
		\draw[1c] (0) to (2);
		\draw[1c] (2) to (1);
		\draw[1c] (3) to (1);
		\draw[edge] (1.6,0.2) to (1.6,0.7) to (2.3,0.7);
	\end{tikzpicture}
	\end{equation}
	in $\pro$.
	\item Construct the free prob $\fun{F}(T \square S)$ and quotient it by the following equations: for all 2\nbd cells 
	\begin{equation*}
		\varphi\colon (a_1,\ldots,a_n) \celto (b_1,\ldots,b_m) \text{ in $T$}, \; \psi\colon (c_1,\ldots,c_p) \celto (d_1,\ldots,d_q) \text{ in $S$,}
	\end{equation*}
	the 1\nbd composite of
		\begin{align*}
			(a_1 \otimes \psi)\cp{0}\ldots\cp{0}(a_n \otimes \psi)& \colon ((a_i \otimes c_k)_{k=1}^p)_{i=1}^n \celto ((a_i \otimes d_\ell)_{\ell=1}^q)_{i=1}^n, \\
			\sigma & \colon ((a_i \otimes d_\ell)_{\ell=1}^q)_{i=1}^n \celto ((a_i \otimes d_\ell)_{i=1}^n)_{\ell=1}^q, \\ 
			(\varphi \otimes d_1)\cp{0}\ldots\cp{0}(\varphi \otimes d_q)& \colon ((a_i \otimes d_\ell)_{i=1}^n)_{\ell=1}^q \celto ((b_j \otimes d_\ell)_{j=1}^m)_{\ell=1}^q, \\
			\sigma^* & \colon ((b_j \otimes d_\ell)_{j=1}^m)_{\ell=1}^q \celto ((b_j \otimes d_\ell)_{\ell=1}^q)_{j=1}^m
		\end{align*}
	is equal to the 1\nbd composite of 
		\begin{align*}
			\sigma & \colon ((a_i \otimes c_k)_{k=1}^p)_{i=1}^n \celto ((a_i \otimes c_k)_{i=1}^n)_{k=1}^p, \\
			(\varphi \otimes c_1)\cp{0}\ldots\cp{0}(\varphi \otimes c_p)& \colon ((a_i \otimes c_k)_{i=1}^n)_{k=1}^p \celto ((b_j \otimes c_k)_{j=1}^m)_{k=1}^p, \\
			\sigma^* & \colon ((b_j \otimes c_k)_{j=1}^m)_{k=1}^p \celto ((b_j \otimes c_k)_{k=1}^p)_{j=1}^m, \\
			(b_1 \otimes \psi)\cp{0}\ldots\cp{0}(b_m \otimes \psi)& \colon ((b_j \otimes c_k)_{k=1}^p)_{j=1}^m \celto ((b_j \otimes d_\ell)_{\ell=1}^q)_{j=1}^m.
		\end{align*}
	We label this equation $\varphi \otimes \psi$. 
\end{enumerate}
Note that any composite indexed by an empty list must be interpreted as a unit on $\bullet$ of the appropriate dimension.

If $f\colon (T, \gen{T}) \to (T', \gen{T}')$ and $g\colon (S, \gen{S}) \to (S', \gen{S}')$ are morphisms of pros, we can define morphisms
\begin{align*}
	& \coprod_{c \in \gen{S}_1} (T, \gen{T}) \to \fun{U}(T' \otimes S', \gen{T}' \otimes \gen{S'}), && \coprod_{a \in \gen{T}_1} (S, \gen{S}) \to \fun{U}(T' \otimes S', \gen{T}' \otimes \gen{S'}), \\
	& x \otimes c \mapsto f(x) \otimes g(c), && a \otimes y \mapsto f(a) \otimes g(y).
\end{align*}
Taking the transpose morphisms in $\prob$, and using the universal property of the pushout (\ref{eq:pre_quotient_tensor}) which is preserved by $\fun{F}$, we obtain a unique morphism $\fun{F}(T \square S) \to (T' \otimes S', \gen{T}' \otimes \gen{S'})$ of probs which is compatible with the $\varphi \otimes \psi$ equations, hence factors uniquely through a morphism
\begin{equation*}
	f \otimes g\colon (T \otimes S, \gen{T} \otimes \gen{S}) \to (T' \otimes S', \gen{T}' \otimes \gen{S'}).
\end{equation*}
This defines a functor $-\otimes-\colon \pro \times \pro \to \prob$. 
\end{dfn}

\begin{rmk}
When either $\varphi$ or $\psi$ is a unit, the equation $\varphi \otimes \psi$ holds automatically by the axioms of braidings. So $\varphi \otimes \psi$ is only non-trivial when both cells have rank 2.

One can derive, as a consequence, that the monoid $\mathbb{N}$ is a ``relative unit'' for the tensor product, in the sense that the functors $\mathbb{N} \otimes -$ and $- \otimes \mathbb{N}$ are naturally isomorphic to $\fun{F}\colon \pro \to \prob$.
\end{rmk}

\begin{exm} \label{exm:bialgebras}
We compute the tensor product $\theory{Bialg} \eqdef \theory{Mon} \otimes \coo{\theory{Mon}}$ of the theories of monoids and comonoids. Both $\theory{Mon}$ and $\coo{\theory{Mon}}$ are one-sorted, so $\theory{Bialg}$ is also one-sorted. 

In fact, the indexed coproducts (\ref{eq:indexed_coprod}) are equal to $\theory{Mon}$ and $\coo{\theory{Mon}}$, respectively, while $\skel{1}{(\theory{Bialg})}$ is isomorphic to $\mathbb{N}$, so the pushout (\ref{eq:pre_quotient_tensor}) can be computed as
	\begin{equation*} 
		\begin{tikzpicture}[baseline={([yshift=-.5ex]current bounding box.center)}]
		\node (0) at (-.5,1.5) {$\mathbb{N}$};
		\node (1) at (2.5,0) {$\theory{Mon} \uplus \coo{\theory{Mon}}$};
		\node (2) at (-.5,0) {$\coo{\theory{Mon}}$};
		\node (3) at (2.5,1.5) {$\theory{Mon}$};
		\draw[1c] (0) to (3);
		\draw[1c] (0) to (2);
		\draw[1c] (2) to (1);
		\draw[1c] (3) to (1);
		\draw[edge] (1.6,0.2) to (1.6,0.7) to (2.3,0.7);
	\end{tikzpicture}
	\end{equation*}
in $\pro$. The 2\nbd cells in $\theory{Mon} \uplus \coo{\theory{Mon}}$ are freely generated by those of $\theory{Mon}$ and $\coo{\theory{Mon}}$, modulo any equations that hold in the two factors separately: a model of $\theory{Mon} \uplus \coo{\theory{Mon}}$ is a pair of a monoid and a comonoid structure on the same object.

Finally, to obtain $\theory{Bialg}$, we quotient $\fun{F}(\theory{Mon} \uplus \coo{\theory{Mon}})$ by the $\varphi \otimes \psi$ equations. It suffices to let $\varphi$ and $\psi$ range over 2\nbd cells that generate $\theory{Mon}$ and $\coo{\theory{Mon}}$, respectively, under composition. 

An obvious choice is to take the unique maps $\mu\colon (2) \celto (1)$ and $\eta\colon (0) \celto (1)$ as generators of $\theory{Mon}$, and their duals $\delta\colon (1) \celto (2)$ and $\varepsilon\colon (1) \celto (0)$ as generators of $\coo{\theory{Mon}}$. We may picture these as
\begin{equation*}
	\input{img/monoid_comonoid}
\end{equation*}
The four corresponding equations are
\begin{equation*}
	\input{img/bialgebra}
\end{equation*}
\begin{equation*}
	\input{img/bialgebra2}
\end{equation*}
In a symmetric monoidal category, a pair of a monoid and a comonoid satisfying these equations is a \emph{bialgebra} \cite{pirashvili2002prop}. In a braided monoidal category, this notion forks into two variants, distinguished by the use of braidings or inverse braidings, classified by $\theory{Bialg}$ and by $\theory{Bialg}^* = \coo{\theory{Mon}} \otimes \theory{Mon}$, respectively. 
\end{exm}

\begin{exm} \label{exm:commutative_monoids}
We compute the tensor product $\theory{BrCMon} \eqdef \theory{Mon} \otimes \theory{Mon}$. We proceed as in Example \ref{exm:bialgebras} to derive that $\theory{BrCMon}$ is the quotient of $\fun{F}(\theory{Mon} \uplus \theory{Mon})$ by the equations $\varphi \otimes \psi$ where $\varphi, \psi$ range over $\{\mu, \eta\}$. 

Using different colours to distinguish cells from each copy of $\theory{Mon}$, these can be pictured as
\begin{equation*}
	\input{img/cmonoid}
\end{equation*}
\begin{equation*}
	\input{img/cmonoid2}
\end{equation*}
A model of $\theory{Mon} \uplus \theory{Mon}$ is a pair of monoid structures on the same object. It is a consequence of the Eckmann--Hilton argument, valid in every braided monoidal category, that a pair of monoid structures satisfying the equations $\mu \otimes \mu$ and $\eta \otimes \eta$ coincide with a single commutative monoid structure. The equations $\eta \otimes \mu$ and $\mu \otimes \eta$ are derivable from the rest.

We conclude that $\theory{BrCMon}$ is the \emph{braided} monoidal theory of commutative monoids, whose reflection $\fun{r}(\theory{BrCMon})$ is isomorphic to $\theory{CMon}$. Dually, $\coo{\theory{Mon}} \otimes \coo{\theory{Mon}}$ is the braided monoidal theory of commutative comonoids.
\end{exm}

\begin{dfn}[Tensor product of props]
The \emph{tensor product} $(T,\gen{T}) \tensorp (S,\gen{S})$ of two props $(T, \gen{T})$ and $(S, \gen{S})$ is the quotient of $\fun{r}(\fun{U}(T,\gen{T}) \otimes \fun{U}(S,\gen{S}))$ by the equations
\begin{equation} \label{eq:free_notfree}
		\sigma_{a,b} \otimes c = \sigma_{a \otimes c, b \otimes c}, \quad a \otimes \sigma_{c,d} = \sigma_{a \otimes c, a \otimes d}
\end{equation}
for all $a,b \in \gen{T}_1$ and $c,d \in \gen{S}_1$, where $\sigma_{a,b}$ and $\sigma_{c,d}$ are the original braidings of $T$ and $S$. 

As shown in \cite[Section 3]{hackney2015category}, the tensor product of props is part of a symmetric monoidal closed structure on $\propp$, whose unit is the theory of permutations $\mathbb{S}$. 
\end{dfn}

\begin{exm}
Given a prop $(T, \gen{T})$, the tensor product $(T, \gen{T}) \tensorp \coo{CMon}$ is a \emph{cartesian} prop, also known as a Lawvere theory. It is in fact the free cartesian prop on $(T, \gen{T})$ \cite{baez2006universal}.
\end{exm}

\begin{comm}
The tensor product of props is compatible with the tensor product of pros in the sense that the diagram of functors
	\begin{equation} \label{eq:tensor_pro_prop}
		\begin{tikzpicture}[baseline={([yshift=-.5ex]current bounding box.center)}]
		\node (0) at (-1.5,1.5) {$\pro \times \pro$};
		\node (1) at (2.5,0) {$\propp$};
		\node (2) at (-1.5,0) {$\propp \times \propp$};
		\node (3) at (2.5,1.5) {$\prob$};
		\draw[1c] (0) to node[auto,arlabel] {$\otimes$} (3);
		\draw[1c] (0) to node[auto,swap,arlabel] {$\fun{rF} \times \fun{rF}$} (2);
		\draw[1c] (2) to node[auto,swap,arlabel] {$\tensorp$} (1);
		\draw[1c] (3) to node[auto,arlabel] {$\fun{r}$} (1);
	\end{tikzpicture}
	\end{equation}
commutes up to natural isomorphism. The reason why this works is that, when $\varphi$ or $\psi$ is a braiding $\sigma_{a,b}$, the equation $\varphi \otimes \psi$ combined with (\ref{eq:free_notfree}) holds automatically in a prop. It follows that, while $\fun{UrF}(T, \gen{T}) \otimes \fun{UrF}(S, \gen{S})$ has additional generators and equations compared to $(T, \gen{T}) \otimes (S, \gen{S})$, these are all trivialised by the combined action of $\fun{r}$ and (\ref{eq:free_notfree}). 

This fact is specific to props and does \emph{not} generalise to probs: the quotient of $\fun{UF}(T, \gen{T}) \otimes \fun{UF}(S, \gen{S})$ by (\ref{eq:free_notfree}) in $\prob$ is not in general isomorphic to $(T, \gen{T}) \otimes (S, \gen{S})$. For example, the quotient of $\fun{U}\mathbb{B} \otimes \fun{U}\mathbb{B}$ by (\ref{eq:free_notfree}) is not isomorphic to $\mathbb{N} \otimes \mathbb{N} \simeq \mathbb{B}$. Indeed, if $\sigma_{1,1}\colon (2) \celto (2)$ is a braiding in $\mathbb{B}$, the equation $\sigma_{1,1} \otimes \sigma_{1,1}$ becomes
\begin{equation*}
	\input{img/invalid_braiding}
\end{equation*}
which does not hold in the braid group on 4 strands. This can be checked by considering the link diagrams
\begin{equation*}
	\input{img/unlink}
\end{equation*}
and observing that the first is an unlink while the second is not. On the other hand, the reflected equation
\begin{equation*}
	\input{img/valid_symmetry}
\end{equation*}
is valid in the theory of permutations.

As a consequence, there does not seem to be an interesting monoidal structure on $\prob$ that generalises the one on $\propp$. Following the interpretation of the tensor product as a smash product, we believe that symmetric monoidal theories being closed under the tensor product is a consequence of symmetric monoidal structures being \emph{stable} under smash products in the sense of stable homotopy theory.
\end{comm}

\begin{rmk}
As shown in \cite[Proposition 40]{hackney2015category}, the tensor product of props extends the Boardman-Vogt product of symmetric operads \cite{boardman2006homotopy}, in the sense that there is an embedding of the category of symmetric operads into the category of props which is strong monoidal with respect to the two monoidal structures.
\end{rmk}

\begin{rmk}
The tensor product of pros is \emph{not} symmetric. Up to the definition of $T \square S$ as the pushout (\ref{eq:pre_quotient_tensor}), the construction of $(T, \gen{T}) \otimes (S, \gen{S})$ and of $(S, \gen{S}) \otimes (T, \gen{T})$ is, indeed, identical up to a change of notation. However, in the final quotient, the roles of $\sigma$ and $\sigma^*$, or braidings and inverse braidings, are reversed.

Nevertheless, this argument reveals a natural isomorphism between
\begin{equation*}
	(T, \gen{T}) \otimes (S, \gen{S}) \quad \text{and} \quad ((S, \gen{S}) \otimes (T, \gen{T}))^*,
\end{equation*}
where $-^*$ is the duality defined in \S \ref{dfn:dualbraided}. From this we can recover a symmetry for the tensor product of props.
\end{rmk}


\subsection{The smash product of pointed diagrammatic sets} \label{sec:smash_diag}

\begin{dfn}[Gray product]
Let $P, Q$ be regular directed complexes. The \emph{Gray product} $P \gray Q$ of $P$ and $Q$ is the cartesian product $P \times Q$ of their underlying posets with the following orientation. Write $x \gray y$ for a generic element of $P \gray Q$. For all $x'$ covered by $x$ in $P$ and all $y'$ covered by $y$ in $Q$,
\begin{align*}
	o(x \gray y \to x' \gray y) & \eqdef o_P(x \to x'), \\
	o(x \gray y \to x \gray y') & \eqdef (-)^{\dmn{x}}o_Q(y \to y'),
\end{align*}
where $o_P$ and $o_Q$ are the orientations of $P$ and $Q$, respectively.

As shown in \cite[Section 2.2]{hadzihasanovic2020diagrammatic}, $P \gray Q$ is a regular directed complex. If $f\colon P \to P'$ and $g\colon Q \to Q'$ are maps of regular directed complexes, let $f \gray g\colon P \gray Q \to P' \gray Q'$ have the cartesian product of $f$ and $g$ as underlying function. Then $f \gray g$ is a map of regular directed complexes. 

Gray products determine a monoidal structure on $\rdcpx$ whose unit is the terminal object $1$. 
\end{dfn}

\begin{dfn} \label{dfn:atom_gray_product}
The monoidal structure on $\rdcpx$ restricts to a monoidal structure on $\atom$, which, by Day's theory \cite{day1970closed}, extends along the Yoneda embedding to a monoidal biclosed structure on $\dgmset$. 

Explicitly, let $X$ and $Y$ be diagrammatic sets. The \emph{Gray product} $X \gray Y$ of $X$ and $Y$ is the colimit in $\dgmset$ of the diagram
	\begin{equation} \label{eq:day_diagram}
		\begin{tikzpicture}[baseline={([yshift=-.5ex]current bounding box.center)}]
		\node (0) at (-2,0) {$\slice{\atom}{X} \times \slice{\atom}{Y}$};
		\node (1) at (2,0) {$\atom \times \atom$};
		\node (2) at (4,0) {$\atom$};
		\node (3) at (6,0) {$\dgmset$,};
		\draw[1c] (0) to node[auto,arlabel] {$\mathrm{dom} \times \mathrm{dom}$} (1);
		\draw[1c] (1) to node[auto,arlabel] {$\gray$} (2);
		\draw[1cinc] (2) to (3);
	\end{tikzpicture}
	\end{equation}
where $\slice{\atom}{X}$ is the category whose objects are cells $x\colon U \to X$ and morphisms from $x\colon U \to X$ to $y\colon V \to X$ are commutative triangles
\begin{equation*}
\begin{tikzpicture}[baseline={([yshift=-.5ex]current bounding box.center)}]
	\node (0) at (-1.25,1.25) {$U$};
	\node (1) at (0,0) {$X$};
	\node (2) at (1.25,1.25) {$V$};
	\draw[1c] (0) to node[auto,arlabel] {$f$} (2);
	\draw[1c] (0) to node[auto,swap,arlabel] {$x$} (1);
	\draw[1c] (2) to node[auto,arlabel] {$y$} (1);
	\node at (1.5,0) {,};
\end{tikzpicture}
\end{equation*}
while $\mathrm{dom}$ sends such a triangle to the map $f\colon U \to V$ in $\atom$. 

In particular, for each pair of cells $x\colon U \to X$ and $y\colon V \to Y$, the image of the pair $(x, y)$ through the diagram (\ref{eq:day_diagram}) is $U \gray V$, so we obtain a morphism $U \gray V \to X \gray Y$ to the colimit, that is, a cell of shape $U \gray V$ in $X \gray Y$. This is the cell $x \gray y$ obtained as the Gray product of $x$ and $y$ in $\dgmset$.
\end{dfn}

\begin{rmk}
The dimensions of cells add under the Gray product, that is, if $x$ is an $n$\nbd cell and $y$ is an $m$\nbd cell, then $x \gray y$ is an $(n+m)$\nbd cell.
\end{rmk}

\begin{rmk}
The Gray product is not the cartesian product in $\dgmset$. However, the monoidal unit is the terminal object, which gives us ``projection'' morphisms $X \gray Y \to X$ and $X \gray Y \to Y$. These send a cell $x \gray y$ of shape $U \gray V$ to $p_1;x$ and $p_2;y$, respectively, where $p_1\colon U \gray V \surj U$ and $p_2\colon U \gray V \surj V$ are projections in $\atom$.
\end{rmk}

\begin{comm} \label{comm:string_diagrams_gray}
We use string diagrams to give some intuition about cells $x \gray y$ of shape $U \gray V$ in low dimension; in the pictures, we write $xy$ for $x \gray y$. First of all, if $x$ or $y$ is a 0\nbd cell, then $U \gray V$ is isomorphic to $V$ or $U$, respectively, and $x \gray y$ has the same dimension and shape as $y$ or $x$. 

Let $a\colon x^- \celto x^+$ be a 1\nbd cell in $X$ and $c\colon y^- \celto y^+$ a 1\nbd cell in $Y$. Then $a \gray c$ is a 2\nbd cell of the form
\begin{equation*}
	\input{img/1cube}
\end{equation*}
that is, it is of type $(x^- \gray c)\cp{0}(a \gray y^+) \celto (a \gray y^-) \cp{0} (x^+ \gray c)$ in $X \gray Y$.

Next, let $\varphi\colon a_1 \cp{0} \ldots \cp{0} a_n \celto b_1 \cp{0} \ldots \cp{0} b_m$ be a 2\nbd cell in $X$, and let $\psi\colon c_1 \cp{0} \ldots \cp{0} c_p \celto d_1 \cp{0} \ldots \cp{0} d_q$ be a 2\nbd cell in $Y$. Then $\varphi \gray c$ is a 3\nbd cell of the form
\begin{equation} \label{eq:cylinder_right}
	\input{img/cylinder_right}
\end{equation}
while $a \gray \psi$ is a 3\nbd cell of the form
\begin{equation} \label{eq:cylinder_left}
	\input{img/cylinder_left}
\end{equation}
in $X \gray Y$. It is useful to think of these as \emph{sliding moves}: $\varphi \gray c$ slides a 2\nbd cell in the fibre of $\varphi$ left-to-right, top-to-bottom past a 1\nbd cell in the fibre of $c$, while $a \gray \psi$ slides a 2\nbd cell in the fibre of $\psi$ left-to-right, bottom-to-top past a 1\nbd cell in the fibre of $a$.

Next, we consider the 4\nbd cell $\varphi \gray \psi$; to simplify, we depict $\varphi$ and $\psi$ as if they had only 2 inputs and 2 outputs each. Then $\bord{}{-}(\varphi \gray \psi)$ is the 3\nbd diagram 
\begin{equation*}
	\input{img/zamolodchikov}
\end{equation*}
where the sequence of sliding moves $a_1 \gray \psi, \ldots, a_n \gray \psi$ is followed by the sequence $\varphi \gray c_1, \ldots, \varphi \gray c_p$, while $\bord{}{+}(\varphi \gray \psi)$ is the 3\nbd diagram
\begin{equation*}
	\input{img/zamolodchikov2}
\end{equation*}
where the sequence of sliding moves $\varphi \gray d_1, \ldots, \varphi \gray d_q$ is followed by the sequence $b_1 \gray \psi, \ldots, b_m \gray \psi$. In the case $n,m,p,q = 2$, one can recognise the two sides of the Zamolodchikov tetrahedron equation \cite{kapranov942categories}.

Next, let $\rho$ be a 3\nbd cell in $X$ and consider the 4\nbd cell $\rho \gray c$. To simplify, we depict $\rho$ as if it were of type $\varphi \celto \varphi'$ where $\varphi$ and $\varphi'$ are both 2\nbd cells. Then $\bord{}{-}(\rho \gray c)$ has the form
\begin{equation*}
	\input{img/coherence_right}
\end{equation*}
while $\bord{}{+}(\rho \gray c)$ has the form
\begin{equation*}
	\input{img/coherence_right2}
\end{equation*}
Dually, if $\tau\colon \psi \celto \psi'$ is a 3\nbd cell in $Y$, $\bord{}{-}(a \gray \tau)$ has the form
\begin{equation*}
	\input{img/coherence_left}
\end{equation*}
while $\bord{}{+}(a \gray \tau)$ has the form
\begin{equation*}
	\input{img/coherence_left2}
\end{equation*}
\end{comm}

\begin{dfn}[Pointed diagrammatic set]
A \emph{pointed diagrammatic set} is a diagrammatic set $X$ together with a distinguished 0\nbd cell $\bullet\colon 1 \to X$, the \emph{basepoint}.

A morphism $f\colon (X, \bullet_X) \to (Y, \bullet_Y)$ of pointed diagrammatic sets is a morphism $f\colon X \to Y$ such that $f(\bullet_X) = \bullet_Y$. With their morphisms, pointed diagrammatic sets form a category $\dgmpoint$.
\end{dfn}

\begin{dfn}
The obvious forgetful functor $\dgmpoint \to \dgmset$ has a left adjoint sending a diagrammatic set $X$ to the coproduct $X + 1$, pointed with the inclusion of $1$ into the coproduct.

The terminal object $1$ of $\dgmset$, pointed with its only 0\nbd cell, is a zero object in $\dgmpoint$, both terminal and initial.
\end{dfn}

\begin{dfn}[Wedge sum]
The \emph{wedge sum} of two pointed diagrammatic sets $(X, \bullet_X)$ and $(Y, \bullet_Y)$ is the pointed diagrammatic set $(X \lor Y, \bullet)$ where
\begin{enumerate}
	\item $X \lor Y$ is the quotient of $X + Y$ by the equation $\bullet_X = \bullet_Y$, and
	\item $\bullet$ is the result of the identification of $\bullet_X$ and $\bullet_Y$. 
\end{enumerate}
\end{dfn}

\begin{dfn}[Smash product]
Let $(X, \bullet_X)$ and $(Y, \bullet_Y)$ be pointed diagrammatic sets. There is an inclusion $X \lor Y \incl X \gray Y$ defined by
\begin{equation*}
	x \mapsto x \gray \bullet_Y, \quad \quad y \mapsto \bullet_X \gray y
\end{equation*}
on cells in $X$ and $Y$, respectively. 

The \emph{smash product} of $(X, \bullet_X)$ and $(Y, \bullet_Y)$ is the pointed diagrammatic set $(X \gsmash Y, \bullet)$ obtained from the pushout diagram
	\begin{equation*}
		\begin{tikzpicture}[baseline={([yshift=-.5ex]current bounding box.center)}]
		\node (0) at (0,1.5) {$X \lor Y$};
		\node (1) at (2.5,0) {$X \gsmash Y$};
		\node (2) at (0,0) {$1$};
		\node (3) at (2.5,1.5) {$X \gray Y$};
		\draw[1cinc] (0) to (3);
		\draw[1c] (0) to (2);
		\draw[1c] (2) to node[auto,arlabel,swap] {$\bullet$} (1);
		\draw[1c] (3) to (1);
		\draw[edge] (1.6,0.2) to (1.6,0.7) to (2.3,0.7);
		\end{tikzpicture}
	\end{equation*}
in $\dgmset$ (the ``quotient of $X \gray Y$ by the subspace $X \lor Y$'').

The smash product is part of a monoidal structure on $\dgmpoint$, whose unit is the diagrammatic set $1 + 1$, pointed with one of the coproduct inclusions, and all structural isomorphisms are derived from those of the Gray product.
\end{dfn}

\begin{comm}
The smash product of pointed diagrammatic sets is a ``directed'' counterpart to the smash product of pointed topological spaces, with the Gray product playing the r\^ole of the cartesian product of spaces. 

The formal correspondence between definitions is made concrete through the geometric realisation of diagrammatic sets \cite[\S 8.38]{hadzihasanovic2020diagrammatic}. This functor $\realis{-}\colon \dgmset \to \cghaus$ sends 0\nbd cells in a diagrammatic set to points in a space, so it lifts to a functor
\begin{equation*}
	\realis{-}\colon \dgmpoint \to \pointed
\end{equation*}
to the category of pointed compactly generated Hausdorff spaces and pointed continuous maps.

We claim that this functor sends smash products in $\dgmpoint$ to smash products in $\pointed$, that is, it is strong monoidal with respect to the two monoidal structures.

\begin{proof}
On regular atoms, $\realis{-}$ is defined as the forgetful functor from $\atom$ to the category of posets and order-preserving maps, followed by the simplicial nerve of posets, followed by the geometric realisation of simplicial sets. The first sends Gray products to cartesian products and the other two preserve finite products. Thus $\realis{U \gray V} \simeq \realis{U} \times \realis{V}$ naturally in $U$ and $V$.

Both Gray products in $\dgmset$ and products in $\cghaus$ are part of a biclosed monoidal structure, so they preserve colimits separately in each variable. Since $\realis{-}$, a left adjoint functor, also preserves colimits, we can extend to an isomorphism $\realis{X \gray Y} \simeq \realis{X} \times \realis{Y}$ natural in the diagrammatic sets $X$ and $Y$.

Finally, $\realis{-}$ also preserves the terminal object, so it sends the colimit diagrams that define wedge sums and smash products in $\dgmpoint$ to the colimit diagrams that define them in $\pointed$.
\end{proof}
\end{comm}

\begin{dfn}
Like the smash product of pointed spaces, the smash product of pointed diagrammatic sets is part of a biclosed structure on $\dgmpoint$.

Left homs and right homs can be computed by a formal argument. If $\rimp{(X,\bullet_X)}{(Y, \bullet_Y)}$ is a right hom in $(\dgmpoint, \gsmash, 1+1)$, cells of shape $U$ in its underlying diagrammatic set correspond to pointed morphisms from $U + 1$ to $\rimp{(X,\bullet_X)}{(Y, \bullet_Y)}$, which correspond to pointed morphisms from $(X,\bullet_X) \gsmash (U + 1)$ to $(Y,\bullet_Y)$. 

Now $X \gsmash (U + 1)$ is isomorphic to the quotient of $X \gray U$ by the subspace $\{\bullet_X\} \gray U$. By the universal property of this quotient, we conclude that there is a bijection between
\begin{enumerate}
	\item cells of shape $U$ in $\rimp{(X,\bullet_X)}{(Y, \bullet_Y)}$ and
	\item morphisms $X \gray U \to Y$ which send $\{\bullet_X\} \gray U$ to $\{\bullet_Y\}$.
\end{enumerate}
Similarly, cells of shape $U$ in the left hom $\limp{(X,\bullet_X)}{(Y, \bullet_Y)}$ correspond bijectively to morphisms $U \gray X \to Y$ sending $U \gray \{\bullet_X\}$ to $\{\bullet_Y\}$. 

In particular, the 0\nbd cells in both the left and the right hom are the pointed morphisms from $(X,\bullet_X)$ to $(Y,\bullet_Y)$. The basepoint, classified by the only morphism from the zero object, is the constant morphism $X \mapsto \bullet_Y$. 
\end{dfn}

\begin{comm} \label{comm:smash_string}
From the string diagram of a cell in $X \gray Y$, it is easy to obtain a picture of the same cell in $X \gsmash Y$: we simply need to identify every cell of the form $x \gray \bullet_Y$ or $\bullet_X \gray y$ and shape $U$ with the cell $!;\bullet$ of the same shape, where $!\colon U \surj 1$ is the unique map to the terminal object. 

We will depict all such 1\nbd cells as dotted wires, and all such 2\nbd cells as dotless nodes, which is consistent with our convention for units and unitors in the nerve of a pro. For example, if $X$ and $Y$ have a single 0\nbd cell, then any 3\nbd cell of the form $\varphi \gray c$ as in (\ref{eq:cylinder_right}) or $a \gray \psi$ as in (\ref{eq:cylinder_left}) becomes
\begin{equation*}
	\input{img/cylinder_smash}
\end{equation*}
respectively, in $X \gsmash Y$.
\end{comm}


\subsection{Comparison of the constructions} \label{sec:comparison}

We are ready to state our main theorem.

\begin{dfn}
Let $(T, \gen{T})$ be a pro. Its diagrammatic nerve $\fun{N}(T,\gen{T})$ has a single 0\nbd cell, so it is canonically pointed, and every morphism in the image of $\fun{N}$ trivially preserves the basepoint. Thus $\fun{N}$, restricted to $\pro$, lifts uniquely to a functor $\fun{N}\colon \pro \to \dgmpoint$. 
\end{dfn}

\begin{thm} \label{thm:mainthm}
The diagram of functors
	\begin{equation*} 
		\begin{tikzpicture}[baseline={([yshift=-.5ex]current bounding box.center)}]
		\node (0) at (-4,1) {$\pro \times \pro$};
		\node (1) at (0,1) {$\prob$};
		\node (1b) at (-4,-1) {$\dgmpoint \times \dgmpoint$};
		\node (2b) at (0,-1) {$\dgmpoint$};
		\node (3) at (3,0) {$\graycat$};
		\draw[1c] (0) to node[auto,swap,arlabel] {$\fun{N} \times \fun{N}$} (1b);
		\draw[1c] (0) to node[auto,arlabel] {$-\otimes-$} (1);
		\draw[1c] (1b) to node[auto,swap,arlabel] {$- \gsmash \oppall{(-)}$} (2b);
		\draw[1c] (1) to node[auto,arlabel] {$\fun{U}_3$} (3);
		\draw[1c] (2b) to node[auto,swap,arlabel] {$\fun{G}$} (3);
		\end{tikzpicture}
	\end{equation*}
commutes up to natural isomorphism.
\end{thm}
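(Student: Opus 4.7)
The plan is, for each pair of pros $(T, \gen{T})$ and $(S, \gen{S})$, to construct mutually inverse Gray-functors between $X \eqdef \fun{G}(\fun{N}(T,\gen{T}) \gsmash \oppall{\fun{N}(S,\gen{S})})$ and $Y \eqdef \fun{U}_3((T, \gen{T}) \otimes (S, \gen{S}))$ and then check naturality. I begin by identifying an explicit Gray presentation of $X$. Because $\fun{N}(T)$ and $\oppall{\fun{N}(S)}$ each have a single $0$-cell which serves as the basepoint, every cell $u \gray v$ with one of $u, v$ of dimension $0$ collapses in the smash; using that $\gray$ preserves colimits in each variable and that every diagrammatic set is the canonical colimit of its atoms, one obtains that the non-trivial generating cells of $\fun{N}(T) \gsmash \oppall{\fun{N}(S)}$ up to dimension $4$ are exactly the $a \gsmash \oppall{c}$, indexed by $(a,c) \in \gen{T}_1 \times \gen{S}_1$, the $\varphi \gsmash \oppall{c}$ and $a \gsmash \oppall{\psi}$, indexed by a generating $2$-cell of $T$ or $S$ together with a sort, and the $\varphi \gsmash \oppall{\psi}$, indexed by pairs of $2$-cell generators. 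Applying $\fun{G}$, we thus obtain the Gray-category freely generated in rank $2$ by the $a \gsmash \oppall{c}$ and in rank $3$ by the $\varphi \gsmash \oppall{c}$ and $a \gsmash \oppall{\psi}$, together with the interchangers $\chi_{-,-}$ forced by the Gray-category axioms, subject to the equations $\intp{\bord{}{-}(\varphi \gsmash \oppall{\psi})} = \intp{\bord{}{+}(\varphi \gsmash \oppall{\psi})}$ coming from each $4$-atom.

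By Definition \ref{dfn:alternative_prob} and the construction of $(T, \gen{T}) \otimes (S, \gen{S})$, the target $Y$ admits a parallel Gray presentation: it is generated in rank $2$ by the sorts $a \otimes c$, in rank $3$ by $\varphi \otimes c$, $a \otimes \psi$ coming from $T \square S$, and by the braidings $\sigma_{a \otimes c, a' \otimes c'}$, which under $\fun{B}$ become the Gray-categorical interchangers, all subject to the braided-monoidal axioms (matched by the Gray axioms) and the $\varphi \otimes \psi$ equations. I would define a forward functor $F\colon X \to Y$ by matching generators on the nose: $a \gsmash \oppall{c} \mapsto a \otimes c$, $\varphi \gsmash \oppall{c} \mapsto \varphi \otimes c$, $a \gsmash \oppall{\psi} \mapsto a \otimes \psi$, and interchangers to braidings. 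The reverse $G\colon Y \to X$ is built via the universal property of the free prob on the pushout $T \square S$ quotiented by the $\varphi \otimes \psi$ equations, sending each generator back to its namesake. Mutual invertibility is immediate on generators and extends to all cells since both functors are Gray-functors determined by their action on generators.

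The main obstacle, and the core of the proof, is verifying that under $F$ the two $3$-boundaries of the $4$-atom $\varphi \gsmash \oppall{\psi}$ are sent to the two sides of the $\varphi \otimes \psi$ equation (and, for $G$ to be well-defined, that the same equation holds in $X$ after the generator assignment). By the explicit description in \S \ref{comm:string_diagrams_gray} of the $3$-boundaries of a Gray-product $4$-cell, each of $\bord{}{\pm}(\varphi \gsmash \oppall{\psi})$ is a concatenation of sliding-move $3$-cells of the forms $a_i \gsmash \oppall{\psi}$, $b_j \gsmash \oppall{\psi}$, $\varphi \gsmash \oppall{c_k}$, $\varphi \gsmash \oppall{d_\ell}$, separated by composites of interchangers that realise the required reorderings of their $1$-orders. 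The combinatorial heart is to apply Lemma \ref{lem:2cells_in_gu} and Proposition \ref{prop:gu_coherence} to identify these composites of interchangers with the specific permutations $\sigma$, $\sigma^*$ of Definition \ref{dfn:permutation} that appear in the definition of $\varphi \otimes \psi$, where the duality $\oppall{-}$ on the second factor and the sign $(-)^{\dmn{x}}$ in the Gray orientation formula must conspire to match orientations. Once this identification is carried out, naturality in $(T, \gen{T})$ and $(S, \gen{S})$ follows automatically: each construction involved is functorial, and $F$ and $G$ are specified entirely by their action on generators of the Gray presentations.
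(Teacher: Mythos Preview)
Your overall strategy matches the paper's: compute $\fun{G}$ of the smash product by identifying its cells dimension by dimension, then match this against the explicit Gray presentation of the tensor product coming from \S\ref{dfn:alternative_prob}. The paper organises this via the skeletal filtration $\skel{i}{X} \gsmash \skel{j}{Y}$, but the content is the same cell-by-cell bookkeeping you describe.

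There is, however, a genuine gap in your enumeration. A pro $(T,\gen{T})$ has no distinguished ``generating $2$-cells'': the structure $\gen{T}$ only specifies sorts in dimension $1$, and the nerve $\fun{N}(T)$ has one $2$-cell of shape $U_{n,m}$ for \emph{every} $2$-cell of $T$. So your rank-$3$ generators $\varphi \gsmash \oppall{c}$ must be indexed by \emph{all} $2$-cells $\varphi$ of $T$, not by a generating set. With that correction, your Gray-category freely generated by these would be far too large: it would have an independent $3$-cell for each operation of $T$, with no relations saying that $(\varphi \cp{1} \varphi') \gsmash \oppall{c}$ agrees with $(\varphi \gsmash \oppall{c}) \cp{2} (\varphi' \gsmash \oppall{c})$, or that units are sent to units. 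Your reverse functor $G$ would then fail to be well-defined, since $Y$ satisfies these relations but your $X$ does not.

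What is missing from your list of $4$-cells is precisely what repairs this: beyond the $\varphi \gsmash \oppall{\psi}$, the smash product also contains $4$-cells $\rho \gsmash \oppall{c}$ and $a \gsmash \oppall{\tau}$ for every $3$-cell $\rho$ of $\fun{N}(T)$ and $\tau$ of $\fun{N}(S)$. By the description in \S\ref{comm:string_diagrams_gray}, these contribute under $\fun{G}$ exactly the equations $(x \gsmash \oppall{c}) = (y \gsmash \oppall{c})$ whenever $x = y$ holds in $T$ (and dually for $S$); Proposition~\ref{prop:full_faithful} guarantees that all and only the equations of $T$ arise this way. The paper uses this to identify $\fun{G}(X \gsmash \skel{1}{Y})$ with $\fun{F}(\coprod_{c} (T,\gen{T}))$ rather than with a free object on all $2$-cells of $T$. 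Once you add these $4$-cells to your presentation, the rest of your argument (matching interchangers with braidings, and $\varphi \gsmash \oppall{\psi}$ with the $\varphi \otimes \psi$ equation) goes through as you outline.
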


\begin{comm}
The form of Theorem \ref{thm:mainthm} does not suggest, at first sight, that the smash product of pointed diagrammatic sets \emph{subsumes} and \emph{generalises} the tensor product of props. Nevertheless, we argue that this is essentially the case.

First of all, since $\fun{U}_3$ is pseudomonic by Remark \ref{rmk:pseudomonic}, if $\fun{G}X$ is isomorphic to $\fun{U}_3(T, \gen{T})$ for some prob $(T, \gen{T})$, then this prob is essentially unique. It follows that, on the image of $\fun{N}- \gsmash \oppall{(\fun{N}-)}$, we can lift $\fun{G}$ to a functor with codomain $\prob$, and compute the tensor product of two pros through the lower leg of the diagram. In this sense, the smash product on $\dgmpoint$ strictly subsumes the ``external'' tensor product of pros. 

From the tensor product of pros, we can recover the tensor product of props via a universal characterisation in $\propp$, independent of the specific construction. If $(T, \gen{T})$ and $(S, \gen{S})$ are two props, we have families of morphisms 
\begin{align*}
	\idd{T} \otimes c\colon  &\fun{FU}(T, \gen{T}) \to \fun{U}(T,\gen{T}) \otimes \fun{U}(S, \gen{S}), \\
	a \otimes \idd{S}\colon  &\fun{FU}(S, \gen{S}) \to \fun{U}(T,\gen{T}) \otimes \fun{U}(S, \gen{S})
\end{align*}
in $\prob$ indexed by $c \in \gen{S}_1$ and $a \in \gen{T}_1$, where $c\colon \mathbb{N} \to \fun{U}(S, \gen{S})$ and $a\colon \mathbb{N} \to \fun{U}(T, \gen{T})$ send the generating 1\nbd cell of $\mathbb{N}$ to $c$ and $a$, respectively; here we use the fact that $- \otimes \mathbb{N}$ and $\mathbb{N} \otimes -$ are naturally isomorphic to $\fun{F}$. Then $(T, \gen{T}) \tensorp (S, \gen{S})$ is the pushout
\begin{equation*} 
\begin{tikzpicture}[baseline={([yshift=-.5ex]current bounding box.center)}]
	\node (0) at (-3.5,1.5) {$\coprod_{c \in \gen{S}_1} \fun{rFU}(T, \gen{T}) + \coprod_{a \in \gen{T}_1} \fun{rFU}(S, \gen{S})$};
	\node (1) at (2.5,0) {$(T, \gen{T}) \tensorp (S, \gen{S})$};
	\node (2) at (-3.5,0) {$\coprod_{c \in \gen{S}_1} (T, \gen{T}) + \coprod_{a \in \gen{T}_1} (S, \gen{S})$};
	\node (3) at (2.5,1.5) {$\fun{r}(\fun{U}(T,\gen{T}) \otimes \fun{U}(S, \gen{S}))$};
	\draw[1c] (0) to (3);
	\draw[1c] (0) to (2);
	\draw[1c] (2) to (1);
	\draw[1c] (3) to (1);
	\draw[edge] (1.6,0.2) to (1.6,0.7) to (2.3,0.7);
\end{tikzpicture}
\end{equation*}
in $\propp$, where the top leg is obtained universally from the family of morphisms $\{\fun{r}(\idd{T} \otimes c), \fun{r}(a \otimes \idd{S}) \mid c \in \gen{S}_1, a \in \gen{T}_1\}$ and the left leg from the counit of the adjunction between $\fun{rF}$ and $\fun{U}$. 
\end{comm}

\begin{dfn}[Skeleta of diagrammatic sets] \label{dfn:skeleta}
For each $n \in \mathbb{N} + \{-1\}$, the restriction functor $\dgmset \to \psh{}{\atom_n}$ has a left adjoint; let $\skel{n}{}$ be the comonad induced by this adjunction. 

The \emph{$n$\nbd skeleton} of a diagrammatic set $X$ is the counit $\skel{n}{X} \to X$. For all $k \leq n$, the $k$\nbd skeleton factors uniquely through the $n$\nbd skeleton of $X$. By a standard argument $X$ is the colimit of the sequence of its skeleta.
\end{dfn}

\begin{proof}[Proof of Theorem \ref{thm:mainthm}] 
Let $(T, \gen{T})$ and $(S, \gen{S})$ be two pros and let $(X,\bullet_X)$ and $(Y,\bullet_Y)$ be equal to $\fun{N}(T,\gen{T})$ and $\oppall{(\fun{N}(S,\gen{S}))}$, respectively. 

As seen in \S \ref{dfn:atom_gray_product}, $X \gray Y$ is the colimit of a diagram of atoms $U \gray V$ indexed by pairs of cells $x\colon U \to X$ and $y \colon V \to Y$, which are transposes of morphisms $x\colon \fun{P}U \to (T,\gen{T})$ and $y\colon \fun{P}\oppall{V} \to (S, \gen{S})$ in $\pro$. The smash product $X \gsmash Y$ is then the colimit of this diagram extended with a morphism $U \gray V \surj 1$ for all atoms $U \gray V$ indexed by $(x,\bullet_Y)$ or by $(\bullet_X, y)$. 

Each diagrammatic set is the colimit of the sequence of its skeleta, and this colimit is preserved by smash products separately in each variable. Because colimits commute with colimits, we can compute $X \gsmash Y$ in steps, increasing $i$ and $j$ separately in $\skel{i}{X} \gsmash \skel{j}{Y}$. This corresponds to restricting the indexing category to pairs $(x,y)$ with $\dmn{x} \leq i$ and $\dmn{y} \leq j$. 

The functor $\fun{G}\colon \dgmset \to \graycat$, a left adjoint, preserves colimits, and we know how to explicitly compute $\fun{G}$ on atoms. We will use this to compute $\fun{G}(X \gsmash Y)$. 

\begin{itemize}
	\item Let $i = 0$ or $j = 0$. Since the only 0\nbd cell in $X$ and $Y$ is their basepoint, both $\skel{0}{X} \gsmash Y$ and $X \gsmash \skel{0}{Y}$ are isomorphic to the terminal diagrammatic set. Their image through $\fun{G}$ is the terminal Gray\nbd category with one 0\nbd cell and no cells of higher rank.
	\item Let $i = j = 1$. The 1\nbd cells in $X$ that do not factor through $\skel{0}{X}$ correspond bijectively to generating 1\nbd cells $a \in \gen{T}_1$, and the 1\nbd cells in $Y$ that do not factor through $\skel{0}{Y}$ to generating 1\nbd cells $c \in \gen{S}_1$. \\
	The boundary of $a \gray c$ contains only 1\nbd cells of the form $!;\bullet$, which $\fun{G}$ sends to units on $\bullet$. Through $\fun{G}$, then, $a \gray c$ becomes a 2\nbd cell of type $\eps{}{\bullet} \celto \eps{}{\bullet}$. Thus $\fun{G}(\skel{1}{X} \gsmash \skel{1}{Y})$ has a single 0\nbd cell, a single 1\nbd cell, and its 2\nbd cells are freely generated by the $a \gray c$: this makes it a prob in the sense of \S \ref{dfn:alternative_prob}, isomorphic to $\fun{F}(\skel{1}(T \otimes S))$. This structure of prob will be inherited by $\fun{G}(\skel{i}{X} \gsmash \skel{j}{Y})$ for all higher $i, j$. 
	\item We fix $j = 1$ and increase $i$; observe that we can stop at $i = 3$, since for $i > 3$ we only include cells of dimension $> 4$, whose contribution through $\fun{G}$ is trivial. \\
	Let $c \in \gen{S}_1$. Each 2\nbd cell $\varphi\colon (a_1,\ldots,a_n) \celto (b_1,\ldots,b_m)$ in $T$ contributes a 3\nbd cell $\varphi \gray c$ in $\skel{2}{X} \gsmash \skel{1}{Y}$; any other 2\nbd cell in $X$ factors through one of this form, so it does not give a contribution. We can read the form of $\varphi \gray c$ from Comment \ref{comm:smash_string}: unravelling the definition of $\fun{G}$ on 3\nbd atoms, we see that it sends $\varphi \gray c$ to a 3\nbd cell of type
	\begin{equation*}
		(a_1 \gray c) \cp{1} \ldots \cp{1} (a_n \gray c) \celto (b_1 \gray c) \cp{1} \ldots \cp{1} (b_m \gray c).
	\end{equation*}
	Extending along colimits, a 2\nbd diagram $x \cp{k} y$ in $X$ induces a diagram $(x \gray c) \cp{k+1} (y \gray c)$ in $\fun{G}(\skel{2}{X} \gsmash \skel{1}{Y})$ for each $k \in \{0,1\}$. \\
	From Proposition \ref{prop:full_faithful}, we know that 3\nbd cells $\rho$ in $X$ exhibit all and only the equations of diagrams $x = y$ that hold in $T$. Reading the form of $\rho \gray c$ from Comment \ref{comm:string_diagrams_gray}, we see that the only part surviving both the smash product quotient and $\fun{G}$ is an equation between the composites of $x \gray c$ and $y \gray c$. For each $c \in \gen{S}_1$, then, we can define a morphism of probs
	\begin{equation*}
		- \gray c\colon \fun{F}(T, \gen{T}) \to \fun{G}(X \gsmash \skel{1}{Y})
	\end{equation*}
	by $x \mapsto x \gray c$ on cells in $T$, extending universally to the free prob, and prove that it is injective. Moreover, the family of the $- \gray c$ is jointly surjective and only overlaps on $\bullet$. We conclude that there is an isomorphism between $\fun{G}(X \gsmash \skel{1}Y)$ and
	\begin{equation*}
		\textstyle \coprod_{c \in \gen{S}_1} \fun{F}(T, \gen{T}) \simeq \fun{F}(\coprod_{c \in \gen{S}_1} (T, \gen{T})),
	\end{equation*}
	the coproducts being in $\prob$ and $\pro$, respectively.
	\item The case where we fix $i = 1$ and increase $j$ is dual, with a subtlety due to the way Gray products change orientations in their second factor depending on the dimension of the first factor. Since we defined $Y$ to be the \emph{dual} of $\fun{N}(S, \gen{S})$, each 2\nbd cell $\psi\colon (c_1, \ldots, c_p) \celto (d_1, \ldots, d_q)$ corresponds to a 2\nbd cell
	\begin{equation*}
		\psi\colon d_q \cp{0} \ldots \cp{0} d_1 \celto c_p \cp{0} \ldots \cp{0} c_1
	\end{equation*}
	in $Y$, which for each $a \in \gen{T}_1$ contributes a 3\nbd cell $a \gray \psi$ in $\skel{1}{X} \gsmash \skel{2}{Y}$. By inspection of the shape of $a \gray \psi$ in Comment \ref{comm:smash_string}, we see that $\fun{G}$ sends it to a 3\nbd cell of type
	\begin{equation*}
		(a \gray c_1) \cp{1} \ldots \cp{1} (a \gray c_p) \celto (a \gray d_1) \cp{1} \ldots \cp{1} (a \gray d_q),
	\end{equation*}
	which matches the original orientation of $\psi$ in $S$. Proceeding as before, then, we construct an isomorphism between $\fun{G}(\skel{1}{X} \gsmash Y)$ and
	\begin{equation*}
		\textstyle\coprod_{a \in \gen{T}_1} \fun{F}(S, \gen{S}) \simeq \fun{F}(\coprod_{a \in \gen{T}_1} (S, \gen{S})).
	\end{equation*}
	It follows that $\fun{G}((X \gsmash \skel{1}{Y}) \cup (\skel{1}{X} \gsmash Y))$ can be computed as the pushout
	\begin{equation*}
		\begin{tikzpicture}[baseline={([yshift=-.5ex]current bounding box.center)}]
		\node (0) at (-2.5,1.5) {$\fun{F}(\skel{1}{(T \otimes S)})$};
		\node (1) at (2.5,0) {$\fun{G}((X \gsmash \skel{1}{Y}) \cup (\skel{1}{X} \gsmash Y))$};
		\node (2) at (-2.5,0) {$\fun{F}(\coprod_{a \in \gen{T}_1} (S, \gen{S}))$};
		\node (3) at (2.5,1.5) {$\fun{F}(\coprod_{c \in \gen{S}_1} (T, \gen{T}))$};
		\draw[1c] (0) to (3);
		\draw[1c] (0) to (2);
		\draw[1c] (2) to (1);
		\draw[1c] (3) to (1);
		\draw[edge] (1.6,0.2) to (1.6,0.7) to (2.3,0.7);
	\end{tikzpicture}
	\end{equation*}
	in $\prob$, isomorphic to $\fun{F}(T \square S)$.
	\item Finally, let $i = j = 2$; increasing either $i$ or $j$ beyond 2 only includes cells of dimension $> 4$, whose contribution is trivial. \\
	Each pair of a 2\nbd cell $\varphi\colon (a_1,\ldots,a_n) \celto (b_1,\ldots,b_m)$ in $T$ and a 2\nbd cell $\psi\colon (c_1,\ldots,c_p) \celto (d_1,\ldots,d_q)$ in $S$ contributes a 4\nbd cell $\varphi \gray \psi$ to $X \gsmash Y$, and any other pair factors through one of this form.  \\
	Remember that the orientation of $\psi$ is reversed in $Y$. Reading the form of $\varphi \gray \psi$ from Comment \ref{comm:string_diagrams_gray} and unravelling the definition of $\fun{G}$ on 4\nbd atoms, we find that the boundaries of $\varphi \gray \psi$ are mapped by $\fun{G}$ to the two sides of the $\varphi \gray \psi$ equation in $\fun{F}(T \square S)$.
\end{itemize}
We conclude that $\fun{G}(X \gsmash Y)$ with its unique prob structure is isomorphic to $(T,\gen{T}) \otimes (S, \gen{S})$. It is straightforward to check that the isomorphism is natural in $(T, \gen{T})$ and $(S, \gen{S})$. 
\end{proof}

\begin{exm} 
We compute an equation of the prob $\theory{Bialg} = \theory{Mon} \otimes \coo{\theory{Mon}}$ through $\dgmpoint$, to illustrate how it arises from a 4\nbd cell in the smash product of $X \eqdef \fun{N}(\theory{Mon})$ and $Y \eqdef \oppall{\fun{N}(\coo{\theory{Mon}})}$.

The two generating 1\nbd cells of $\theory{Mon}$ and $\coo{\theory{Mon}}$ produce a 2\nbd cell $1 \gray 1$ in $X \gsmash Y$. From the generator $\mu\colon (2) \celto (1)$ of $\theory{Mon}$, we obtain a 3\nbd cell $\mu \gray 1$ in $X \gsmash Y$. Because $1 \gray 1$ is the only non-degenerate 2\nbd cell appearing in the boundary of $\mu \gray 1$, we may informally picture this 3\nbd cell as a string diagram in 3\nbd dimensional space, ``tracing the history'' of the various copies of $1 \gray 1$:
\begin{equation*}
	\input{img/suspension1}
\end{equation*}
Similarly, from the generator $\delta\colon (1) \celto (2)$ of $\coo{\theory{Mon}}$, we obtain a 3\nbd cell $1 \gray \delta$ of the form
\begin{equation*}
	\input{img/suspension3}
\end{equation*}
Now $\bord{}{-}(\mu \gray \delta)$ has the form 
\begin{equation*}
	\input{img/bialgebra2d_input}
\end{equation*}
while $\bord{}{+}(\mu \gray \delta)$ has the form
\begin{equation*}
	\input{img/bialgebra2d_output}
\end{equation*}
In 3\nbd dimensional string diagrams, we may picture $\mu \gray \delta$ as
\begin{equation} \label{eq:bialgebra3d}
	\input{img/bialgebra3d}
\end{equation}
If we also ``trace the history'' of the dotted wires to produce \emph{surface diagrams} in the style of \cite{vicary2019coherence}, we recover, up to a deformation, the ``intersecting surfaces'' picture (\ref{eq:sliding_surfaces}). 

The picture of the equation $\mu \gray \delta$ in Example \ref{exm:bialgebras} can be interpreted as a planar projection of (\ref{eq:bialgebra3d}). Technically, the single instance of a braiding in this equation arises, by definition of $\fun{G}$, from the fact that the input 2\nbd cells of the first instance of $\mu \gray 1$ in $\bord{}{-}(\mu \gray \delta)$ are not consecutive in the normal 1\nbd order on the overall 2\nbd diagram. 
\end{exm}

\section{Higher-dimensional cells} \label{sec:higher}

Having established that the smash product of pointed diagrammatic sets generalises the tensor product of pros, we briefly explore the potential of this generalisation in higher-dimensional universal algebra and rewriting.

\begin{dfn}[Diagrammatic complex]
A \emph{diagrammatic complex} is a diagrammatic set $X$ together with a set $\gen{X} = \sum_{n\in \mathbb{N}} \gen{X}_n$ of \emph{generating} cells such that, for all $n \in \mathbb{N}$,  
\begin{equation} \label{eq:diag_complex}
\begin{tikzpicture}[baseline={([yshift=-.5ex]current bounding box.center)}]
	\node (0) at (-1.5,1.5) {$\coprod_{x \in \gen{X}_n} \bord U(x)$};
	\node (1) at (2.5,0) {$\skel{n}{X}$};
	\node (2) at (-1.5,0) {$\skel{n-1}{X}$};
	\node (3) at (2.5,1.5) {$\coprod_{x \in \gen{X}_n} U(x)$};
	\draw[1cinc] (0) to (3);
	\draw[1c] (0) to (2);
	\draw[1cinc] (2) to (1);
	\draw[1c] (3) to node[auto,arlabel] {$(x)_{x \in \gen{X}_n}$} (1);
	\draw[edge] (1.6,0.2) to (1.6,0.7) to (2.3,0.7);
\end{tikzpicture}
\end{equation}
is a pushout in $\dgmset$. Here $U(x)$ denotes the shape of the cell $x$.
\end{dfn}

\begin{rmk}
It follows from the results of \cite[Section 8.3]{hadzihasanovic2020diagrammatic} that the geometric realisation of diagrammatic sets sends a diagrammatic complex $(X, \gen{X})$ to a CW complex with one cell for each generating cell in $\gen{X}$.
\end{rmk}

\begin{prop}
Let $(X, \gen{X})$ and $(Y, \gen{Y})$ be diagrammatic complexes. Then $X \gray Y$ is a diagrammatic complex with 
\begin{equation*}
	(\gen{X} \gray \gen{Y})_n \eqdef \sum_{k=0}^n \{x \gray y \mid x \in \gen{X}_k, y \in \gen{Y}_{n-k} \}.
\end{equation*}
\end{prop}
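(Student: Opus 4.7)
The proof proceeds by induction on $n \in \mathbb{N}$, exploiting the fact that the Gray product is part of a biclosed monoidal structure on $\dgmset$ and hence preserves colimits separately in each variable. For fixed diagrammatic complexes $(X, \gen{X})$ and $(Y, \gen{Y})$, let
\[
	F_n \eqdef \bigcup_{k+\ell \leq n} \skel{k}{X} \gray \skel{\ell}{Y} \subseteq X \gray Y.
\]
I will show (a) that $F_n$ coincides with $\skel{n}{(X \gray Y)}$, and (b) that the inclusion $F_{n-1} \incl F_n$ fits into a pushout of the form (\ref{eq:diag_complex}) indexed by pairs $(x, y) \in \gen{X}_k \times \gen{Y}_{n-k}$ for $0 \leq k \leq n$. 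Together, (a) and (b) assemble into the stated diagrammatic complex structure.

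For (a), the inclusion $F_n \subseteq \skel{n}{(X \gray Y)}$ is immediate from $\dmn{U \gray V} = \dmn{U} + \dmn{V}$ for regular atoms. Conversely, any cell $z\colon W \to X \gray Y$ with $\dmn{W} \leq n$ arises, by the Yoneda description of $X \gray Y$ recalled in \S \ref{dfn:atom_gray_product}, as the composite of some map $f\colon W \to U(x) \gray U(y)$ in $\atom$ with $x \gray y$, for suitable cells $x, y$. Since maps in $\atom$ preserve the poset order non-strictly and do not increase dimension, every element of $W$ is sent below $f(\top_W)$, so the image of $f$ lies in the subatom $\clos\{p_1 f(\top_W)\} \gray \clos\{p_2 f(\top_W)\}$, where $p_1, p_2$ are the projections. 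Setting $K \eqdef \dmn{p_1 f(\top_W)}$ and $L \eqdef \dmn{p_2 f(\top_W)}$, we have $K + L = \dmn{f(\top_W)} \leq \dmn{W} \leq n$, so $z$ factors through $\skel{K}{X} \gray \skel{L}{Y} \subseteq F_n$.

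For (b), I would exploit the defining pushout $\skel{k-1}{X} \cup_{\coprod_x \bord U(x)} \coprod_x U(x) = \skel{k}{X}$ indexed by $x \in \gen{X}_k$, and its analogue for $Y$. Applying $- \gray \skel{\ell}{Y}$, which preserves pushouts, gives
\[
	\skel{k}{X} \gray \skel{\ell}{Y} = (\skel{k-1}{X} \gray \skel{\ell}{Y}) \cup_{\coprod_x \bord U(x) \gray \skel{\ell}{Y}} \coprod_x U(x) \gray \skel{\ell}{Y},
\]
and a symmetric identity in the second variable expresses each $U(x) \gray \skel{\ell}{Y}$ as a pushout attaching $U(x) \gray U(y)$ over $U(x) \gray \bord U(y)$ for $y \in \gen{Y}_\ell$. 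The combinatorial identity gluing these pieces is the Leibniz-type boundary formula
\[
	\bord(U \gray V) = (\bord U \gray V) \cup (U \gray \bord V)
\]
for regular atoms, which holds because $\bord U$ coincides with $U$ minus its greatest element, and similarly for $V$, so both sides equal $U \gray V$ minus $\top_U \gray \top_V$. When $\dmn{U} = k$ and $\dmn{V} = \ell$ with $k + \ell = n$, this exhibits $\bord(U \gray V)$ as a subobject of $F_{n-1}$; attaching all $U(x) \gray U(y)$ along these boundaries, as $(x, y)$ ranges over pairs of generators with $\dmn{U(x)} + \dmn{U(y)} = n$, produces exactly $F_n$.

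The main obstacle is the combinatorial bookkeeping needed to verify that the iterated pushouts produced by distributing Gray products across both variables collapse, without overcounting or leaving remainders, to the single pushout in the statement. This reduces, via the Leibniz formula and induction, to the two observations that the attaching map for each $U(x) \gray U(y)$ with $\dmn{U(x)} + \dmn{U(y)} = n$ factors through $F_{n-1}$, and that distinct pairs of generators contribute disjoint ``new'' cells in $F_n \setminus F_{n-1}$.
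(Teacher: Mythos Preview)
The paper's own proof is a one-line citation to \cite[Theorem 1.35]{hadzihasanovic2017algebra}, noting that the argument there transfers verbatim upon replacing ``polygraph'' with ``diagrammatic complex'' and ``globe'' with ``regular atom''. Your direct argument, via the Leibniz boundary formula $\bord(U \gray V) = (\bord U \gray V) \cup (U \gray \bord V)$ together with iterated pushouts preserved by the biclosed Gray product, is precisely the argument contained in that reference: it is the classical ``product of CW complexes is a CW complex'' argument transported to this setting. So your approach is correct and essentially identical to the intended one, only spelled out rather than cited.
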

\begin{proof}
Essentially the same as \cite[Theorem 1.35]{hadzihasanovic2017algebra}, replacing ``polygraph'' with ``diagrammatic complex'' and ``globe'' with ``regular atom''.
\end{proof}

\begin{rmk}
A straightforward consequence: the smash product $X \gsmash Y$ of pointed diagrammatic complexes $(X, \gen{X}, \bullet_X)$ and $(Y, \gen{Y}, \bullet_Y)$ is a pointed diagrammatic complex whose generating cells are $\bullet$ and the pairs $x \gray y$ with $x \in \gen{X} \setminus \{\bullet_X\}$ and $y \in \gen{Y} \setminus \{\bullet_Y\}$. 
\end{rmk}

\begin{dfn}[Diagrammatic presentation]
Let $(T, \gen{T})$ be a bicoloured pro. A \emph{presentation} of $(T, \gen{T})$ is a diagrammatic complex $(X, \gen{X})$ such that $\fun{P}X$ is isomorphic to $(T, \gen{T})$. 

Similarly let $(T, \gen{T})$ be a prob. A \emph{presentation} of $(T, \gen{T})$ is a diagrammatic complex $(X, \gen{X})$ such that $\fun{G}X$ is isomorphic to $\fun{U}_3(T, \gen{T})$. 
\end{dfn}

\begin{exm} \label{exm:monoid_presentation}
The pro of monoids $\theory{Mon}$ admits the following presentation $(X, \gen{X})$. To begin, $\gen{X}_0$ contains a single 0\nbd cell $\bullet$ and $\gen{X}_1$ a single 1\nbd cell $1\colon \bullet \celto \bullet$. Next, $\gen{X}_2$ contains a 2\nbd cell $\mu\colon 1\cp{0}1 \celto 1$ and a 2\nbd cell $\eta\colon \eps{}\bullet \celto 1$, which we picture as
\begin{equation*}
	\input{img/monoid_2gen}
\end{equation*}
Finally, $\gen{X}_3$ contains 3\nbd cells $\alpha, \lambda, \rho$ of the form
\begin{equation*}
	\input{img/monoid_3gen}
\end{equation*}
\end{exm}

\begin{dfn}
Unless a pro $(T, \gen{T})$ embodies a free monoidal theory, a presentation $(X, \gen{X})$ contains some generating 3\nbd cells, exhibiting equations in $\fun{P}X$. Higher-dimensional generators have no effect on the presented pro, as $\fun{P}$ turns them into trivial ``equations of equations''.

For presentations of a \emph{prob}, the same statement applies shifted by one dimension: generating 4\nbd cells exhibit equations, while higher-dimensional generators are trivialised by $\fun{G}$. 

For the purpose of computing a tensor product of pros, we can replace the nerves of pros $(T, \gen{T})$, $(S, \gen{S})$ with any pair of presentations $(X, \gen{X})$, $(Y, \gen{Y})$, pointed with their unique 0\nbd cell, to obtain a presentation $X \gsmash \oppall{Y}$ of the prob $(T, \gen{T}) \otimes (S, \gen{S})$. 

Even if $(X, \gen{X})$ and $(Y, \gen{Y})$ contain no generating cells in dimension higher than 3, $X \gsmash \oppall{Y}$ contains generating cells up to dimension 6, two more than the threshold of significance for $\fun{G}$. Thus the smash product $X \gsmash \oppall{Y}$ contains strictly more data than the tensor product $(T, \gen{T}) \otimes (S, \gen{S})$. 

We suggest that these data can be interpreted through the lens of higher\nbd dimensional rewriting, and in particular the concepts of \emph{syzygies} and \emph{coherence}; we refer to Yves Guiraud's \emph{th\`ese d'habilitation} \cite{guiraud2019rewriting} for an introduction. 

Rewriting theory is concerned with computational properties of presentations, in particular the properties of confluence and termination. When a presentation is embodied by a polygraph, confluence at a critical branching is exhibited by a pair of parallel cells. In a coherent presentation, this is strengthened to the requirement that the parallel pair be filled by a higher\nbd dimensional cell, sometimes called a \emph{syzygy} \cite{loday2000homotopical}. This requirement can be extended by asking that higher\nbd dimensional parallel pairs also have fillers.

As the following example shows, it appears that the higher\nbd dimensional cells produced by the smash product of two presentations of pros are syzygies for the presentation of their tensor product.
\end{dfn}

\begin{exm}
Let $(X, \gen{X})$ be the presentation of $\theory{Mon}$ from Example \ref{exm:monoid_presentation}. Then $(\oppall{X}, \oppall{\gen{X}})$ is a presentation of $\coo{\theory{Mon}}$, so the smash product $X \gsmash X$ is a presentation of the prob $\theory{Bialg}$ of bialgebras.

Let us compute this smash product. To simplify, we employ the following abuse of notation: we represent a 3\nbd diagram $x$ in $X \gsmash X$ as a 2\nbd diagram in $\theory{Bialg}$ whose image through $\fun{U}_3$ has the same composite as $\fun{G}x$. This allows us to depict $n$\nbd cells in $X \gsmash X$ as if they were $(n-1)$\nbd cells. This is not a faithful representation: most notably, a subdiagram of a 3\nbd diagram is different from a subdiagram of its representation as a 2\nbd diagram. A 4\nbd diagram may look here like a ``3\nbd diagram modulo the axioms of braidings''. 

To begin, $X \gsmash X$ has a single generating 0\nbd cell and no generating 1\nbd cells. The only generating 2\nbd cell is $1 \gray 1$. 

The generating 3\nbd cells are $\mu \gray 1$, $\eta \gray 1$, $1 \gray \mu$, and $1 \gray \eta$. These are the standard generators of $\theory{Bialg}$ as in Example \ref{exm:bialgebras}, and with our abuse of notation, we use the same depiction:
\begin{equation*}
	\input{img/presentation_smash}
\end{equation*}

There are 10 generating 4\nbd cells which can be subdivided into three groups. Those of the form $x \gray 1$ for $x \in \gen{X}_3$ have the same representation as $x$, that is,
\begin{equation*}
	\input{img/presentation_smash2}
\end{equation*}
Those of the form $1 \gray x$ for $x \in \gen{X}_3$ have the same representation as $\oppall{x}$:
\begin{equation*}
	\input{img/presentation_smash3}
\end{equation*}
Finally, those of the form $x \gray y$ for $x, y \in \gen{X}_2$ present the additional equations of Example \ref{exm:bialgebras} with the orientation
\begin{equation*}
	\input{img/presentation_smash4}
\end{equation*}
\begin{equation*}
	\input{img/presentation_smash5}
\end{equation*}

This presentation of $\theory{Bialg}$ contains new critical branchings that do not correspond to critical branchings in the presentations of $\theory{Mon}$ or $\coo{\theory{Mon}}$. For example, we have the following critical branching involving $\alpha \gray 1$ and $\mu \gray \mu$:
\begin{equation} \label{eq:critical1}
	\input{img/critical1}
\end{equation}
There are 12 generating 5\nbd cells of $X \gsmash X$, of the form $x \gray y$ where either $x \in \gen{X}_3$ and $y \in \gen{X}_2$ or $x \in \gen{X}_2$ and $y \in \gen{X}_3$. We observe that these are syzygies exhibiting confluence at these critical branchings. 

For example, $\bord{}{-}(\alpha \gray \mu)$ is
\begin{equation*}
	\input{img/syzygy1}
\end{equation*}
while $\bord{}{+}(\alpha \gray \mu)$ is
\begin{equation*}
	\input{img/syzygy2}
\end{equation*}
which exhibits confluence at the critical branching (\ref{eq:critical1}). 

As another example, $\bord{}{-}(\eta \gray \lambda)$ is
\begin{equation*}
	\input{img/syzygy3}
\end{equation*}
while $\bord{}{+}(\eta \gray \lambda)$ is
\begin{equation*}
	\input{img/syzygy4}
\end{equation*}
Here the unlabelled 4\nbd cell is a degenerate 4\nbd cell \cite[\S 4.14]{hadzihasanovic2020diagrammatic} of the form $\eta \gray x$ where $x$ is a unitor 2\nbd cell. It is mapped by $\fun{G}$ to a diagram in $\theory{Bialg}$ whose image is the unit $\eps{}(\eta \gray 1)$, so we may want to treat $\bord{}{+}(\eta \gray \lambda)$ as a single rewrite step. Thus $\eta \gray \lambda$ exhibits confluence at a critical branching involving $\eta \gray \eta$ and $1 \gray \lambda$. 

These syzygies are oriented, so they can be interpreted as higher\nbd dimensional rewrites creating critical branchings one dimension up. The 9 generating 6\nbd cells of $X \gsmash X$, of the form $x \gray y$ where $x \in \gen{X}_3$ and $y \in \gen{X}_3$, are higher syzygies exhibiting confluence at these higher branchings.
\end{exm}

\begin{comm}
Diagrammatic complexes are closely related to polygraphs, so the definitions of confluent, terminating, and coherent presentations should admit sufficiently straightforward translations to our framework. 

The only difficulty is the treatment of degenerate cells. This can most likely be circumvented by considering finite sub-presheaves of the underlying \emph{combinatorial polygraph} of a diagrammatic complex \cite[Section 6.2]{hadzihasanovic2020diagrammatic}. We note that, in fact, a combinatorial polygraph is equivalent to a polygraph if [Conjecture 7.7, \emph{ibid.}] holds, which for now is proven up to dimension 3.

We would like to make the informal conjecture that, if $(X, \gen{X})$ and $(Y, \gen{Y})$ are two presentations of pros that are coherent in a suitable sense, then $X \gsmash \oppall{Y}$ is a coherent diagrammatic presentation of their tensor product; or, at least, a coherent presentation can be extracted from it. We leave the formal development of this problem to future work.
\end{comm}

\begin{dfn} 
To conclude, we may want to leave behind the interpretation of diagrammatic complexes as presentations of pros or probs and consider them directly as embodiments of higher\nbd dimensional theories, such as homotopical algebraic theories. 
\end{dfn}

\begin{dfn}[$n$-Tuply monoidal diagrammatic set]
For each $n > 0$, a pointed diagrammatic set $(X, \bullet)$ is \emph{$n$\nbd tuply monoidal} if $\bullet\colon 1 \to \skel{n-1}{X}$ is an isomorphism.

We say \emph{monoidal} instead of 1\nbd tuply monoidal and \emph{doubly monoidal} instead of 2\nbd tuply monoidal.
\end{dfn}

\begin{exm}
A presentation of a pro is monoidal, while a presentation of a prob is doubly monoidal. 
\end{exm}

\begin{dfn}
We propose the following basic setup for higher\nbd dimensional universal algebra in diagrammatic sets:
\begin{itemize}
	\item a presentation of a $k$\nbd tuply monoidal higher\nbd dimensional theory is embodied by a $k$\nbd tuply monoidal diagrammatic complex $(X, \gen{X}, \bullet_X)$;
	\item a ``semantic universe'' for such a theory is a $k$\nbd tuply monoidal \emph{diagrammatic set with weak composites} $(M, \bullet_M)$ \cite[\S 6.1]{hadzihasanovic2020diagrammatic}, a form of weak higher category;
	\item both the right and the left hom $\rimp{(X,\bullet_X)}{(M, \bullet_M)}$, $\limp{(X,\bullet_X)}{(M, \bullet_M)}$ are spaces of models of the theory in $M$. These coincide on 0\nbd cells, which are pointed morphisms from $(X,\bullet_X)$ to $(M,\bullet_M)$, but have different (``lax'' or ``oplax'') higher transformations.
\end{itemize}
Diagrammatic sets with weak composites encompass strict $\omega$\nbd categories via the diagrammatic nerve construction of [Section 7.2, \emph{ibid.}] but also homotopy types via the right adjoint of geometric realisation, leading to a strictly more general class of semantic universes compared to the theory of polygraphs.
\end{dfn}

\begin{exm}
We extend the presentation of Example \ref{exm:monoid_presentation} to a presentation of the 2\nbd dimensional theory of \emph{pseudomonoids} \cite{street1997monoidal}. 

First we add generating 4\nbd cells $\pi, \tau$ where 
\begin{align*}
	\bord{}{-}\pi & \eqdef \; \input{img/pentagonator_input} \\
	\bord{}{+}\pi & \eqdef \; \input{img/pentagonator_output}
\end{align*}
while
\begin{align*}
	\bord{}{-}\tau & \eqdef \; \input{img/triangle_input} \\
	\bord{}{+}\tau & \eqdef \; \input{img/triangle_output}
\end{align*}
Here the unlabelled 3\nbd cell in $\bord{}{-}\tau$ is a degenerate 3\nbd cell of the form $p;\mu$ for an appropriate surjective map of atoms $p$. 

We let $\theory{PsMon}$ be the localisation of this diagrammatic complex at the set $\{\alpha,\lambda,\rho\}$ \cite[\S 6.4]{hadzihasanovic2020diagrammatic}. This operation weakly inverts the generating 3\nbd cells. We note that a localisation of a diagrammatic complex is always a diagrammatic complex.
\end{exm}

\begin{exm}
The paradigmatic 2\nbd category has small categories as 0\nbd cells, functors as 1\nbd cells, and natural transformations as 2\nbd cells. This can be given a cartesian monoidal structure, and this monoidal structure can be strictified, producing a strict monoidal 2\nbd category. 

This is equivalent to a 3\nbd category with a single 0\nbd cell. If we restrict to suitably small categories, we can make sure that this defines an object of $\ncat{3}$, which through the diagrammatic nerve produces a pointed diagrammatic set $\theory{Cat}_\times$ that has weak composites and is monoidal.

A model of $\theory{PsMon}$ in $\theory{Cat}_\times$, that is, a pointed morphism $\theory{PsMon} \to \theory{Cat}_\times$ in $\dgmpoint$, is then precisely a small monoidal category.
\end{exm}

\begin{rmk}
Let $(X, \bullet_X)$ be an $n$\nbd tuply monoidal and $(Y, \bullet_Y)$ an $m$\nbd tuply monoidal diagrammatic set. Then $X \gsmash Y$ is $(n+m)$\nbd tuply monoidal. As a special case, we recover the fact that the smash product of two presentations of pros is doubly monoidal, so it presents a prob.

Following Baez and Dolan's stabilisation hypothesis \cite{baez1995higher}, we expect a $k$\nbd tuply monoidal diagrammatic complex to present a prop when $k > 2$. We have not defined a realisation functor that would make this statement precise. Nevertheless, this gives us an idea of what iterated smash products of monoidal theories produce: a single smash product yields a braided monoidal theory, any number above that a symmetric monoidal theory. 

For higher\nbd dimensional theories, this ought to generalise as follows: the smash product of $m$ different $k$\nbd tuply monoidal diagrammatic sets, interpreted as presentations of $n$\nbd dimensional theories, presents a symmetric monoidal $n$\nbd dimensional theory as soon as $mk > n + 1$. 
\end{rmk}

\subsection*{Acknowledgements}

This work was supported by the ESF funded Estonian IT Academy research measure (project 2014-2020.4.05.19-0001).

I am grateful to Simon Forest, Fosco Loregian, Fran\c{c}ois M\'etayer, Samuel Mimram, Pawe\l{} Soboci\'nski, and Sophie Turner for various forms of help.

\bibliographystyle{alpha}
\small \bibliography{main}

\begin{thebibliography}{Had20b}

\bibitem[AM20]{ara2020joint}
D.~Ara and G.~Maltsiniotis.
\newblock Joint et tranches pour les $\infty$\nbd cat\'egories strictes.
\newblock {\em M{\'e}moires de la Soci{\'e}t{\'e} Math{\'e}matique de France},
  165, 2020.

\bibitem[AR94]{adamek1994locally}
J.~Ad{\'a}mek and J.~Rosick{\'y}.
\newblock {\em Locally presentable and accessible categories}, volume 189.
\newblock Cambridge University Press, 1994.

\bibitem[Bae06]{baez2006universal}
J.C. Baez.
\newblock Universal algebra and diagrammatic reasoning.
\newblock Lectures at \emph{Geometry of Computation} meeting. Transparencies
  available at \url{https://math.ucr.edu/home/baez/universal/}, 2006.

\bibitem[BD95]{baez1995higher}
J.C. Baez and J.~Dolan.
\newblock Higher-dimensional algebra and topological quantum field theory.
\newblock {\em Journal of Mathematical Physics}, 36(11):6073--6105, 1995.

\bibitem[BSZ14]{bonchi2014interacting}
F.~Bonchi, P.~Soboci{\'n}ski, and F.~Zanasi.
\newblock Interacting bialgebras are {F}robenius.
\newblock In {\em International Conference on Foundations of Software Science
  and Computation Structures}, pages 351--365. Springer, 2014.

\bibitem[BSZ17]{bonchi2017interacting}
F.~Bonchi, P.~Soboci{\'n}ski, and F.~Zanasi.
\newblock Interacting {H}opf algebras.
\newblock {\em Journal of Pure and Applied Algebra}, 221(1):144--184, 2017.

\bibitem[Bur93]{burroni1993higher}
A.~Burroni.
\newblock Higher-dimensional word problems with applications to equational
  logic.
\newblock {\em Theoretical Computer Science}, 115(1):43--62, 1993.

\bibitem[BV06]{boardman2006homotopy}
J.M. Boardman and R.M. Vogt.
\newblock {\em Homotopy invariant algebraic structures on topological spaces},
  volume 347.
\newblock Springer, 2006.

\bibitem[Car18]{carter2018graphs}
J.S. Carter.
\newblock Graphs, foams, tensors, polytopes, and homology.
\newblock {\em Journal of Knot Theory and Its Ramifications}, 27(11):1843015,
  2018.

\bibitem[CG07]{cheng2007periodic}
E.~Cheng and N.~Gurski.
\newblock The periodic table of $n$-categories for low dimensions {I}:
  degenerate categories and degenerate bicategories.
\newblock {\em Contemporary Mathematics}, 431:143--164, 2007.

\bibitem[CG11]{cheng2011periodic}
E.~Cheng and N.~Gurski.
\newblock The periodic table of $n$-categories {II}: degenerate tricategories.
\newblock {\em Cahiers de topologie et g{\'e}om{\'e}trie diff{\'e}rentielle
  cat{\'e}goriques}, 52(2):82--125, 2011.

\bibitem[Day70]{day1970closed}
B.~Day.
\newblock On closed categories of functors.
\newblock In {\em Lecture Notes in Mathematics}, pages 1--38. Springer Berlin
  Heidelberg, 1970.

\bibitem[Dor18]{dorn2018associative}
C.~Dorn.
\newblock {\em Associative $n$-categories}.
\newblock PhD thesis, University of Oxford, 2018.

\bibitem[FM18]{forest2018coherence}
S.~Forest and S.~Mimram.
\newblock Coherence of {G}ray categories via rewriting.
\newblock In {\em {3rd International Conference on Formal Structures for
  Computation and Deduction (FSCD 2018)}}, volume 108, 2018.

\bibitem[Fox76]{fox1976coalgebras}
T.~Fox.
\newblock Coalgebras and cartesian categories.
\newblock {\em Communications in Algebra}, 4(7):665--667, 1976.

\bibitem[GM16]{guiraud2016polygraphs}
Y.~Guiraud and P.~Malbos.
\newblock Polygraphs of finite derivation type.
\newblock {\em Mathematical Structures in Computer Science}, pages 1--47, 2016.

\bibitem[GPS95]{gordon1995coherence}
R.~Gordon, J.~Power, and R.~Street.
\newblock {\em Coherence for tricategories}, volume 558.
\newblock American Mathematical Soc., 1995.

\bibitem[Gra74]{gray2006formal}
J.W. Gray.
\newblock {\em Formal category theory: adjointness for 2-categories}, volume
  391.
\newblock Springer, 1974.

\bibitem[Gui19]{guiraud2019rewriting}
Y.~Guiraud.
\newblock \emph{Rewriting methods in higher algebra}.
\newblock Th\`ese d'habilitation \`a diriger des recherches, Universit\'e Paris
  7, 2019.

\bibitem[Had17]{hadzihasanovic2017algebra}
A.~Hadzihasanovic.
\newblock {\em The algebra of entanglement and the geometry of composition}.
\newblock PhD thesis, University of Oxford, 2017.

\bibitem[Had20a]{hadzihasanovic2018combinatorial}
A.~Hadzihasanovic.
\newblock A combinatorial-topological shape category for polygraphs.
\newblock {\em Applied Categorical Structures}, 28(3):419--476, 2020.

\bibitem[Had20b]{hadzihasanovic2020diagrammatic}
A.~Hadzihasanovic.
\newblock Diagrammatic sets and rewriting in weak higher categories.
\newblock {\em arXiv preprint arXiv:2007.14505}, 2020.

\bibitem[HR15]{hackney2015category}
P.~Hackney and M.~Robertson.
\newblock On the category of props.
\newblock {\em Applied Categorical Structures}, 23(4):543--573, 2015.

\bibitem[JS91]{joyal1991geometry}
A.~Joyal and R.~Street.
\newblock The geometry of tensor calculus, {I}.
\newblock {\em Advances in mathematics}, 88(1):55--112, 1991.

\bibitem[KV94]{kapranov942categories}
M.M. Kapranov and V.A. Voevodsky.
\newblock 2-categories and {Z}amolodchikov tetrahedra equations.
\newblock In {\em Proc. Symp. Pure Math}, volume~56, pages 177--260, 1994.

\bibitem[Lac04]{lack2004composing}
S.~Lack.
\newblock Composing {PROP}s.
\newblock {\em Theory and Applications of Categories}, 13(9):147--163, 2004.

\bibitem[Lac11]{lack2011quillen}
S.~Lack.
\newblock A {Q}uillen model structure for {G}ray-categories.
\newblock {\em Journal of K-Theory}, 8(2):183--221, 2011.

\bibitem[Law63]{lawvere1963functorial}
F.W. Lawvere.
\newblock Functorial semantics of algebraic theories.
\newblock {\em Proceedings of the National Academy of Sciences of the United
  States of America}, 50(5):869, 1963.

\bibitem[Lin69]{linton1969coequalizers}
F.E.J. Linton.
\newblock Coequalizers in categories of algebras.
\newblock In {\em Seminar on triples and categorical homology theory}, pages
  75--90. Springer, 1969.

\bibitem[Lod00]{loday2000homotopical}
J.-L. Loday.
\newblock Homotopical syzygies.
\newblock {\em Contemporary Mathematics}, 265:99--128, 2000.

\bibitem[Mak05]{makkai2005word}
M.~Makkai.
\newblock The word problem for computads.
\newblock Available on the author’s web page
  \url{http://www.math.mcgill.ca/makkai}, 2005.

\bibitem[Mim14]{mimram2014towards}
S.~Mimram.
\newblock Towards 3-dimensional rewriting theory.
\newblock {\em Logical Methods in Computer Science}, 10(1):1–47, 2014.

\bibitem[ML65]{maclane1965categorical}
S.~Mac~Lane.
\newblock Categorical algebra.
\newblock {\em Bulletin of the American Mathematical Society}, 71(1):40--106,
  1965.

\bibitem[ML71]{maclane1971cats}
S.~Mac~Lane.
\newblock {\em Categories for the Working Mathematician}.
\newblock Springer New York, 1971.

\bibitem[Pir02]{pirashvili2002prop}
T.~Pirashvili.
\newblock On the {PROP} corresponding to bialgebras.
\newblock {\em Cahiers de topologie et g{\'e}om{\'e}trie diff{\'e}rentielle
  cat{\'e}goriques}, 43(3):221--239, 2002.

\bibitem[Pow91]{power1991pasting}
J.~Power.
\newblock An $n$-categorical pasting theorem.
\newblock In {\em Lecture Notes in Mathematics}, pages 326--358. Springer
  Nature, 1991.

\bibitem[Rie17]{riehl2017category}
E.~Riehl.
\newblock {\em Category theory in context}.
\newblock Courier Dover Publications, 2017.

\bibitem[RV19]{reutter2019high}
D.~Reutter and J.~Vicary.
\newblock High-level methods for homotopy construction in associative
  $n$-categories.
\newblock In {\em 2019 34th Annual ACM/IEEE Symposium on Logic in Computer
  Science (LICS)}, pages 1--13. IEEE, 2019.

\bibitem[SD97]{street1997monoidal}
R.~Street and B.~Day.
\newblock {Monoidal bicategories and {H}opf algebroids}.
\newblock {\em Adv. Math}, 129:99--157, 1997.

\bibitem[Sel10]{selinger2010survey}
P.~Selinger.
\newblock A survey of graphical languages for monoidal categories.
\newblock In {\em New Structures for Physics}, pages 289--355. Springer, 2010.

\bibitem[Sim09]{simpson2009homotopy}
C.~Simpson.
\newblock {\em Homotopy Theory of Higher Categories}.
\newblock Cambridge University Press, 2009.

\bibitem[Ste93]{steiner1993algebra}
R.~Steiner.
\newblock The algebra of directed complexes.
\newblock {\em Applied Categorical Structures}, 1(3):247--284, 1993.

\bibitem[Ste04]{steiner2004omega}
R.~Steiner.
\newblock Omega-categories and chain complexes.
\newblock {\em Homology, Homotopy and Applications}, 6(1):175--200, 2004.

\bibitem[VD19]{vicary2019coherence}
J.~Vicary and L.~Dunn.
\newblock Coherence for {F}robenius pseudomonoids and the geometry of linear
  proofs.
\newblock {\em Logical Methods in Computer Science}, 15(3), 2019.

\end{thebibliography}

\end{document}